\setlist[itemize,1]{leftmargin=1.5em}	
\setlist[enumerate,1]{leftmargin=1.5em}	
\def\@tocline#1#2#3#4#5#6#7{\relax
	\ifnum #1>\c@tocdepth 
	\else
	\par \addpenalty\@secpenalty\addvspace{#2}%
	\begingroup \hyphenpenalty\@M
	\@ifempty{#4}{%
		\@tempdima\csname r@tocindent\number#1\endcsname\relax
	}{%
		\@tempdima#4\relax
	}%
	\parindent\z@ \leftskip#3\relax \advance\leftskip\@tempdima\relax
	\rightskip\@pnumwidth plus4em \parfillskip-\@pnumwidth
	#5\leavevmode\hskip-\@tempdima
	\ifcase #1
	\or\or \hskip 1em \or \hskip 2em \else \hskip 3em \fi%
	#6\nobreak\relax
	\dotfill\hbox to\@pnumwidth{\@tocpagenum{#7}}\par
	\nobreak
	\endgroup
	\fi}
\numberwithin{equation}{section}					
\newcommand{\de}{\partial}
\newcommand{\Z}{\mathbb{Z}}
\newcommand{\C}{\mathbb{C}}
\renewcommand{\P}{\mathbb{P}}
\newcommand{\iu}{\mathrm{i}}
\newcommand{\Id}{\mathrm{Id}}
\newcommand{\mc}[1]{\mathcal{#1}}
\newcommand{\bigO}{\mathord{\mathrm{O}}}
\newcommand{\Mbar}{\overline{\mathcal{M}}}
\DeclareMathOperator*{\Res}{Res}
\DeclareMathOperator{\sgn}{sgn}
\theoremstyle{plain}
\newtheorem{theorem}{Theorem}[section]
\newtheorem{proposition}[theorem]{Proposition}
\newtheorem{lemma}[theorem]{Lemma}
\newtheorem{corollary}[theorem]{Corollary}
\newtheorem{introthm}{Theorem}
\theoremstyle{definition}
\newtheorem{definition}[theorem]{Definition}
\newtheorem{remark}[theorem]{Remark}
\newtheorem{example}[theorem]{Example}
\crefname{lemma}{lemma}{lemmata}
\Crefname{lemma}{Lemma}{Lemmata}
\crefname{subsection}{subsection}{subsections}
\Crefname{subsection}{Subsection}{Subsections}
\crefname{conjecture}{conjecture}{conjectures}
\Crefname{conjecture}{Conjecture}{Conjectures}
\title{Theta classes: generalized topological recursion, integrability and $\mathcal{W}$-constraints}
\author[V. Bouchard]{Vincent Bouchard}%
\address[V. B.]{%
	Department of Mathematical {\&} Statistical Sciences, University of Alberta, 632 CAB, Edmonton, Alberta, Canada, T6G 2G1%
}%
\email{vincent.bouchard@ualberta.ca}
\author[N.\,K. Chidambaram]{Nitin K. Chidambaram}%
\address[N.\,K. C.]{%
	School of Mathematics, University of Edinburgh, James Clerk Maxwell Building, Peter Guthrie Tait Rd, Edinburgh EH9 3FD, United Kingdom%
}%
\email{nitin.chidambaram@ed.ac.uk}
\author[A. Giacchetto]{Alessandro Giacchetto}%
\address[A. G.]{%
	ETH Z{\"u}rich, Departement Mathematik, Rämistrasse 101, 8092 Z{\"u}rich, Switzerland%
}%
\email{alessandro.giacchetto@math.ethz.ch}
\author[S. Shadrin]{Sergey Shadrin}%
\address[S. S.]{%
	Korteweg--de Vries Institute for Mathematics, University of Amsterdam, Postbus 94248, 1090GE Amsterdam, The Netherlands%
}%
\email{s.shadrin@uva.nl}
\begin{document}
	
\begin{abstract}
	We study the intersection theory of the $\Theta^{r,s}$-classes, where $r \geq 2$ and $1 \le s \le r-1$, which are cohomological field theories obtained as the top degrees of Chiodo classes. We show that the recently introduced generalized topological recursion on the $(r,s)$ spectral curves computes the descendant integrals of the $\Theta^{r,s}$-classes. As a consequence, we deduce that the descendant potential of the $\Theta^{r,s}$-classes is a tau function of the $r$-KdV hierarchy, generalizing the Brézin--Gross--Witten tau function (the special case $r=2$, $s=1$). We also explicitly compute the $\mathcal{W}$-constraints satisfied by the descendant potential, obtained as differential representations of the $\mathcal{W}(\mathfrak{gl}_r)$-algebra at self-dual level. This work extends previously known results on the Witten $r$-spin class, the $r$-spin $\Theta$-classes (the case $s=r-1$), and the Norbury $\Theta$-classes (the special case $r=2$, $s=1$).
\end{abstract} 

\maketitle

\setlength{\parskip}{0.5em}	
	\tableofcontents
\setlength{\parskip}{1em}	

\newpage
\section{Introduction}
\label{sec:introduction}
{\addtocontents{toc}{\protect\setcounter{tocdepth}{1}}

Topological recursion~\cite{EO07} takes as input a spectral curve, which consists of a Riemann surface $\Sigma$, two meromorphic functions $x$ and $y$ on $\Sigma$, and a symmetric bi-differential $B$ on $\Sigma^2$, subject to some particular conditions. Via an explicit recursive procedure, it outputs a collection of symmetric meromorphic differentials $\set{ \omega_{g,n} }$ called ``correlators'' and defined for all $g \in \mathbb{Z}_{\geq 0}$, $n \in \mathbb{Z}_{>0}$, and $2g-2+n>0$. The standard assumption on $x$ and $y$ is that all zeros of $dx$ are simple and $dy$ does not vanish at these points~\cite{EO07}. However, this assumption can be relaxed: one can study spectral curves such that $dx$ has higher order zeros. This is done in a variety of papers, first by Bouchard--Eynard~\cite{BE13,BHLMR14}, and then further developed in~\cite{BBCCN24,BKS24,BBCKS}. In this generalized context, the main building blocks of spectral curves are the \textit{$(r,s)$ spectral curves}, which consist of
\begin{equation}\label{eq:rscurve}
	\Sigma = \mathbb{P}^1,
	\qquad\qquad
	x = z^r,
	\qquad\qquad
	y = z^{s-r},
	\qquad\qquad
	B = \frac{dz_1 dz_2}{(z_1-z_2)^2},
\end{equation}
with $r \geq 2$ and $s \in [r+1]$ with $r \equiv \pm 1 \pmod{s}$. Those are building blocks for spectral curves since they control the local behaviour of general spectral curves near the zeros of $dx$.

One of the main areas of application of the theory of topological recursion is its connection to the intersection theory of the moduli spaces of curves~\cite{Eyn14} and, more specifically in some cases, to cohomological field theories~\cite{DOSS14}. In order to systematically develop this connection, one of the key questions is to identify the cohomology classes on the moduli space of curves whose intersection with $\psi$-classes (also known as descendant integrals) is controlled by the expansions of the correlators for the basic building blocks, the $(r,s)$ spectral curves. 

In the case when $s=r+1$, the correlators calculate descendant integrals for the $r$-spin Witten class, see~\cite{BE17,DNOPS19,CCGG24}. When $s=r-1$, the correlators calculate descendant integrals for the so-called $r$-spin $\Theta$-classes, which are defined as the top degrees of certain Chiodo classes~\cite{CGG} (in the special case $r=2, s=1$, it reproduces the Norbury $\Theta$-classes~\cite{Nor23}). However, for the other cases with $r \geq 3$, $s = 2,\ldots, r-2$ and $r \equiv \pm 1 \pmod{s}$, the enumerative meaning of the correlators is an open question~\cite{BBCCN24,BKS24,CGG}.

There is an alternative approach to the definition of topological recursion that comes from the idea that the correlators should be compatible with the universal $x-y$ swap formula~\cite{ABDKS24-logTR,ABDKS25-xy}, as well as with a more general group of symplectic transformations~\cite{ABDKS24-sym,BDKS24} (see also \cite{Hoc23,Hoc24}). This approach was further developed and related to the Bouchard--Eynard recursion in~\cite{ABDKS25-gTR}, where it was dubbed \emph{generalized topological recursion}. In this framework, the open question above is naturally replaced by a different one: what is the geometric meaning of the expansion of the correlators obtained by the generalized topological recursion on the $(r,s)$ spectral curves? Note that this alternative question does not require any modularity constraint on $r$ and $s$: generalized topological recursion applies to all $(r,s)$ spectral curves with $r \geq 2$ and $s \in [r-1]$. 

\subsection*{Generalized topological recursion}
The first goal of this paper is to answer this alternative question. We prove, in \cref{thm:WgnTheta} and \cref{thm:gTR}, that the correlators produced by generalized topological recursion on all $(r,s)$ spectral curves with $r \geq 2$ and $s \in [r-1]$ calculate descendant integrals of the so-called $\Theta^{r,s}$-classes. The $\Theta^{r,s}$-class (defined precisely in \cref{def:theta}) is the top degree piece of the Chiodo class $ C_{g,n}^{r,s}$, which in turn is defined via the moduli space \smash{$\Mbar^{r,s-r}_{g,a}$} of twisted spin curves with primary fields $a \in \mathbb Z^n$.

\begin{introthm}[{Generalized topological recursion for $\Theta^{r,s}$}]
	The correlators $\omega_{g,n}$ produced by generalized topological recursion on the $(r,s)$ spectral curve calculate the descendant integrals of the $\Theta^{r,s}$-classes. More precisely, for $g \ge 0$, $n \ge 1$ such that $2g-2+n>0$, we have
	\begin{equation}
		\omega_{g,n}(z_1,\ldots,z_n)
		=
		\left( - \frac{1}{r} \right)^{2g-2+n}
		\sum_{\substack{ k_1,\dots,k_n \ge 0 \\ a_1,\dots,a_n \in [r-1]}}
		\int_{\Mbar_{g,n}}
		\Theta_{g,n}^{r,s}(a_1,\ldots,a_n)
		\prod_{i=1}^n \psi_i^{k_i} \, d \xi_{k_i,a_i}(z_i),
	\end{equation}
	where $d \xi_{k,a}(z) = (rk+a)!^{(r)} \frac{dz}{z^{rk+a+1}}$.
\end{introthm}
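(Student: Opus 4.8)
The plan is to establish the identity by proving its two halves separately --- the cohomological computation and the topological-recursion identification, as organized in \cref{thm:WgnTheta} and \cref{thm:gTR} --- and then combining them. First I would package the right-hand side into a family of symmetric meromorphic differentials $\omega_{g,n}^{\Theta}$ on $(\mathbb{P}^1)^n$, whose expansion coefficients in the basis $d\xi_{k,a}$ are exactly the descendant integrals $\int_{\overline{\mathcal M}_{g,n}} \Theta_{g,n}^{r,s}(a_1,\ldots,a_n)\prod_i \psi_i^{k_i}$. Then I would show that generalized topological recursion on the $(r,s)$ curve reproduces precisely these differentials.

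For the cohomological side, the key input is that the Chiodo class $C_{g,n}^{r,s}$ is a cohomological field theory reconstructed, in the sense of Givental--Teleman, from a shifted topological field theory by the action of an $R$-matrix. Chiodo's formula for the Chern character of the universal $r$-th root furnishes this $R$-matrix explicitly, in terms of Bernoulli-type generating series, together with its accompanying translation. Since the $\Theta^{r,s}$-class is by \cref{def:theta} the top-degree part of $C_{g,n}^{r,s}$, I would extract the relevant top-degree piece of the Givental graph sum; this truncation collapses most of the $R$-matrix data and is what produces the normalization $(-\tfrac1r)^{2g-2+n}$ and singles out the exponent $s-r$ that will appear in $y=z^{s-r}$.

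For the topological-recursion side, I would invoke the correspondence between local spectral-curve data and Givental data: the expansion of $y\,dx$ near the ramification point $z=0$ of the $(r,s)$ curve determines an $R$-matrix and a shift, the bidifferential $B$ fixes the flat Frobenius structure, and the auxiliary differentials are identified with the $d\xi_{k,a}$ through their pole expansions at $z=0$. Matching this data against the one computed on the cohomological side, term by term in $k_i$ and $a_i$, yields the stated formula. The essential new ingredient --- compared with the classical Dunin-Barkowski--Orantin--Shadrin--Spitz theorem, which requires simple ramification --- is that $dx = r\,z^{r-1}\,dz$ has an order-$(r-1)$ zero at $z=0$, so the $\omega_{g,n}$ are genuinely those of the generalized (Bouchard--Eynard) recursion; the identification with the Givental reconstruction is then licensed by the compatibility of generalized topological recursion with the universal $x$-$y$ swap and the symplectic transformations of \cite{ABDKS24-logTR,ABDKS25-xy,ABDKS25-gTR,BDKS24}.

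The main obstacle I anticipate is controlling this higher-order ramification rigorously. Proving that the generalized recursion output coincides with the Givental reconstruction of the $\Theta^{r,s}$-CohFT requires that the top-degree truncation defining $\Theta^{r,s}$ be exactly compatible with the analytic continuation implicit in the $x$-$y$ swap that relates the $(r,s)$ curve to a simple-ramification model; verifying this compatibility on the nose, rather than merely up to an overall constant, is the crux. A secondary but delicate point is the combinatorial bookkeeping: one must track the $(r)$-factorials $(rk+a)!^{(r)}$, the constraint $a\in[r-1]$, and the signs carefully, so that the two computations agree exactly.
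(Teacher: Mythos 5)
Your plan founders on a point the paper goes out of its way to emphasize: for $s \neq r-1$ the correlators of the generalized topological recursion of \cite{ABDKS25-gTR} on the $(r,s)$ curve are \emph{not} the correlators of the Bouchard--Eynard recursion, so your parenthetical identification ``the generalized (Bouchard--Eynard) recursion'' is false, and it is load-bearing in your argument. A DOSS-style matching of the local expansion of $y\,dx$ at the order-$(r-1)$ zero of $dx$ against Givental data is exactly the kind of local argument that characterizes Bouchard--Eynard-type correlators (this is the route of \cite{BE17,DNOPS19,CCGG24} for $s=r+1$ and of \cite{CGG} for $s=r-1$); if it proved anything here, it would prove the displayed formula for the Bouchard--Eynard correlators, which the paper shows is \emph{wrong} for $s\neq r-1$ (their geometric meaning remains an open question). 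Compounding this, the $\Theta^{r,s}$ CohFT is nowhere semisimple, so Teleman reconstruction does not apply to it, and your proposed fix --- reconstruct the semisimple Chiodo CohFT by an $R$-matrix and then ``extract the top-degree piece of the graph sum'' --- is never actually connected to the $(r,s)$ spectral curve by any mechanism: the $(r,s)$ curve is not the DOSS curve of the Chiodo CohFT, and no $x$-$y$ swap by itself relates it to a simple-ramification model for that CohFT.

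What the paper does instead, and what your proposal is missing, is a deformation argument through an explicit family. One deforms to $x = z^r - t\log z$, $t \neq 0$, which has $r$ \emph{simple} ramification points; there \cref{prop:Gia-t} (from \cite{SSZ15,LPSZ17,Gia21}) says that Eynard--Orantin TR computes Chiodo-class descendant integrals, and \cref{prop:ClosedFormula-t} says that generalized TR coincides with Eynard--Orantin TR, yielding a determinantal formula valid for all $t\neq 0$. Then one takes $t\to 0$ on both sides (\cref{thm:WgnTheta}): on the intersection-theory side the scaling $(t/r)^{D^{r,s}_{g;a}}\,C^{r,s}_{g,n}(a;r/t)$ picks out precisely the top degree, i.e.\ $\Theta^{r,s}_{g,n}(a)$ of \cref{def:theta}, the forms $d\theta_{k,a}(z;t)$ degenerate to $d\xi_{k,a}(z)$, and the $a=0$ insertions drop out since $d\theta_{k,0}\to 0$; on the recursion side the limit is identified with generalized TR on the limiting $(r,s)$ curve by the analyticity of generalized TR in parameters (\cite[Theorem~5.3]{ABDKS25-gTR}, used in \cref{thm:gTR}) --- exactly the ``commutes with limits'' property that the Bouchard--Eynard recursion lacks. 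Without such a family there is nothing for your ``top-degree truncation'' to be a limit of, and no way to certify which recursion the limiting differentials satisfy; this, rather than the bookkeeping of multi-factorials and signs, is the real gap.
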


This is a generalization of the $s=r-1$ result of~\cite{CGG}, but  our approach is quite different from \textit{loc. cit.} In fact, it is interesting to note that for all $s \neq r-1$, the correlators produced by generalized topological recursion do not coincide with the correlators produced by the Bouchard--Eynard recursion. It thus remains a mystery what the geometric meaning of the latter is.

\subsection*{Integrability}
Our second result concerns integrability of the descendant potential $Z^{r,s}$ for the $\Theta^{r,s}$-classes, see \cref{eq:tau} for the precise definition. A direct consequence of the generalized topological recursion framework of~\cite{ABDKS25-gTR} is that the descendant potential is a KP tau function. In fact, we show in \cref{sec:integrability} that $Z^{r,s}$ is a tau function for the $r$-KdV hierarchy.

\begin{introthm}[Integrability]
	The descendant potential $Z^{r,s}(\bm{x};\hbar)$ of the $\Theta^{r,s}$-class is a tau function of the $r$-KdV hierarchy corresponding to the unique $r$-KdV solution whose initial condition is given explicitly in \cref{prop:ic} in terms of $\Theta^{r,s}$-integrals.
\end{introthm}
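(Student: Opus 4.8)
The plan is to upgrade the KP integrability that comes for free from the generalized topological recursion framework to full $r$-KdV integrability, and then to isolate the particular solution. As noted above, the construction of~\cite{ABDKS25-gTR} attaches to the correlators $\omega_{g,n}$ on the $(r,s)$ spectral curve a descendant potential $Z^{r,s}(\bm{x};\hbar)$ that is automatically a tau function of the KP hierarchy; here the KP times $T_m$ ($m \ge 1$) are the coefficients of the expansion of the correlators in the basis $\set{ z^{-m-1}\,dz }_{m \ge 1}$, and the descendant variables $\bm{x} = (x_{k,a})$ are related to the $T_m$ through the basis $d\xi_{k,a}$ after the appropriate dilaton shift. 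First I would record this dictionary precisely, so that the statement ``$Z^{r,s}$ is a KP tau function'' refers to a bona fide tau function in the standard normalization.

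The core of the argument is the $r$-reduction. The decisive structural input is the explicit formula of \cref{thm:gTR}: the expansion of every correlator $\omega_{g,n}$ only involves the differentials $d\xi_{k,a}$ with $a \in [r-1]$, that is, with pole order $rk+a+1$ where $rk+a \not\equiv 0 \pmod r$. Consequently $Z^{r,s}$ should have no genuine dependence on the KP times $T_m$ with $r \mid m$: every mixed derivative $\partial_{T_{m_1}} \cdots \partial_{T_{m_n}} \log Z^{r,s}$ with $n \ge 2$ is a connected correlator and hence vanishes identically whenever some $m_i \equiv 0 \pmod r$, so that $\partial_{T_{rk}} \log Z^{r,s}$ is constant in all remaining times for each $k \ge 1$. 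This is exactly the defining property of an $r$-reduced KP tau function, whence $Z^{r,s}$ is a tau function of the $r$-KdV (Gelfand--Dickey) hierarchy. I would make the reduction invariant by rephrasing it on the Sato Grassmannian: the point $W$ associated to $Z^{r,s}$ is stabilized by multiplication by $x = z^r$, i.e.\ $z^r W \subseteq W$, which is the intrinsic incarnation of the $\mathbb{Z}/r$ symmetry $z \mapsto \zeta z$ of the spectral curve and the standard criterion for $r$-reducibility.

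Finally I would pin down the solution. The $r$-KdV hierarchy admits a unique solution once its initial data along the first (spatial) flow is fixed, all higher times being set to zero. I would compute this restriction of $\log Z^{r,s}$ directly from \cref{thm:gTR}, keeping only the insertions of the surviving basis differentials, so that the initial condition is expressed through the genus-zero and unstable $\Theta^{r,s}$-integrals; this is the content of \cref{prop:ic}, and it is precisely what distinguishes the present tau function from the Witten--Kontsevich one (for $r=2$, $s=1$ it reproduces the Brézin--Gross--Witten solution rather than the Kontsevich one).

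The step I expect to be the main obstacle is the reduction itself---turning the formal observation that the times divisible by $r$ are absent into the rigorous statement that $\partial_{T_{rk}} \log Z^{r,s}$ is a constant as an identity in all remaining times. This requires controlling the KP-time dictionary and the dilaton shift carefully, and verifying the Grassmannian invariance $z^r W \subseteq W$ rather than merely its infinitesimal shadow at the origin. The subsequent identification of the initial condition is then a direct, if somewhat involved, extraction of the low-complexity $\Theta^{r,s}$-integrals.
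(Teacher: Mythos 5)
Your proposal is correct and follows essentially the same route as the paper: KP integrability comes for free from the generalized topological recursion framework on genus-zero spectral curves, the $r$-reduction holds because the primary field $a=0$ is excluded so the times $x_{rm}$ are simply absent from $Z^{r,s}$ in \eqref{eq:tau}, and the solution is then pinned down by the initial conditions of \cref{prop:ic}. The only caveats are that the step you flag as the ``main obstacle'' is in fact trivial here (the $r$-divisible times do not appear at all in the potential, so independence holds identically and no Sato--Grassmannian argument is required), and the initial data of \cref{prop:ic} involve higher-genus $\Theta^{r,s}$-integrals (for $s=1$ resummed via the modified unit axiom), not just genus-zero and unstable ones.
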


This provides a generalized version of the Brézin--Gross--Witten tau function of the KdV hierarchy \cite{BG80,GW80}, corresponding to the case $(r,s) = (2,1)$ and whose geometric interpretation in terms of intersection numbers was conjectured by Norbury \cite{Nor23} and proved in \cite{CGG}. The $s=1$ case  of the above theorem gives an enumerative interpretation to the tau functions studied in \cite{YZ23} for certain choices of their constants $d_1,\ldots,d_{r-1}$ (see \cref{prop:YZ}). One can also view this statement as an analogue of the Witten $r$-spin conjecture (which corresponds to the case $s = r+1$) \cite{FSZ10}.

\subsection*{Loop equations and \texorpdfstring{$\mathcal{W}$}{W}-constraints}
We then move on to the study of $\mathcal{W}$-constraints. In~\cite{BBCCN24}, it is shown that the Bouchard--Eynard topological recursion on the $(r,s)$ spectral curves can be recast as a set of differential constraints satisfied by a partition function. These differential operators form a representation of the $\mathcal{W}(\mathfrak{gl}_r)$-algebra at self-dual level. The precise correspondence goes through the formulation of abstract loop equations satisfied by the correlators, to which topological recursion is the unique solution.

As the correlators that generate descendant integrals for the $\Theta^{r,s}$-classes satisfy the generalized topological recursion, but generally not the Bouchard--Eynard recursion, we cannot  apply the results of~\cite{BBCCN24} directly to obtain $\mathcal{W}$-constraints for the descendant potential. Instead, in \cref{sec:loop} we use the determinantal formulas that the generalized topological recursion correlators were proved to satisfy in \cite{ABDKS25-KP}, and follow the strategy of \cite{BEM18} to derive  loop equations for them explicitly.  These loop equations coincide with the usual ones when $s=r-1$, as expected from~\cite{CGG}. When $s=1$, we obtain a particular case of the ``shifted loop equations'' studied recently in~\cite{BBKN}. For other choices of $s$, we get a new system of loop equations.

We then recast those loop equations as differential constraints for the descendant potential.  These differential operators from a representation (the so-called twist field representation) of the principal $\mathcal{W}(\mathfrak{gl}_r)$-algebra at self-dual level. Thus, we obtain a set of $\mathcal{W}$-constraints for the descendant potential $Z^{r,s}$ in \cref{thm:W:const}.

\begin{introthm}[{$\mathcal{W}$-constraints}]
	Consider the modes $H^i_k$ of $\mathcal{W}(\mathfrak{gl}_r)$ in the representation \eqref{eq:Wmodes}. Then, for any $ r \geq 2 $ and $s \in [r-1]$, we have 
	\begin{equation}
		H^i_k Z^{r,s} = \begin{cases}
			\hbar^i A_i \delta_{k,0} Z^{r,s} & i \in [r-s] \, , k \geq 0, \\
			0 & r-s +1\leq i \leq r\, , k \geq r-s-i+1,
		\end{cases}
	\end{equation} 
	where the constants $A_i$ are defined in terms of the elementary symmetric polynomial $e_i$ by
	\begin{equation}
		A_i
		\coloneqq
		e_i\left(
		\frac{2+s-r-1}{2(r-s)}, \frac{4+ s-r-1}{2(r-s)}, \ldots, \frac{2(r-s) + s-r-1}{2(r-s)}
		\right),
	\end{equation}
	and vanish unless $i$ is even.
\end{introthm}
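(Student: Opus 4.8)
The plan is to obtain the $\mathcal{W}$-constraints by translating the loop equations of \cref{sec:loop} into differential constraints on $Z^{r,s}$ and then recognizing the resulting operators as the modes $H^i_k$ of the twist field representation \eqref{eq:Wmodes} of $\mathcal{W}(\mathfrak{gl}_r)$ at self-dual level. The input is the system of loop equations just established: following the determinantal formulas of \cite{ABDKS25-KP} and the method of \cite{BEM18}, these assert that prescribed symmetric combinations of the correlators over the $r$ sheets $z \mapsto \zeta^j z$ (with $\zeta = e^{2\pi\iu/r}$) of $x = z^r$ extend holomorphically across the branch point $z = 0$, subject to an $s$-dependent shift that generalizes the shifted loop equations of \cite{BBKN}. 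Although \cite{BBCCN24} carries out an identification of this type for the Bouchard--Eynard recursion, it does not apply verbatim here because our correlators satisfy the generalized recursion; the new work therefore lies in tracking how the $s$-shift affects both the annihilation ranges and, above all, the scalar eigenvalues.

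First I would fix the dictionary between correlators and the descendant potential, expanding the $\omega_{g,n}$ in the basis $d\xi_{k,a}$ of \cref{thm:WgnTheta} and taking the dual times as the variables $\bm{x}$ of $Z^{r,s}$ (as in \cref{eq:tau}); under this dictionary the modes of \eqref{eq:Wmodes} become explicit differential operators in $\bm{x}$. Expanding each loop equation in the local coordinate $z$ and extracting the coefficient of each power then converts the regularity statement attached to spin $i$ into a family of operators, indexed by the mode number $k$, annihilating $Z^{r,s}$. Organizing these by spin and matching them to the currents $H^i(z)$, the order of regularity imposed by the loop equation reads off exactly the annihilation range: $k \geq 1$ for the low-spin currents $i \in [r-s]$ and $k \geq r-s-i+1$ for the high-spin currents $r-s+1 \leq i \leq r$.

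The delicate part is the non-homogeneous scalar $\hbar^i A_i$ that survives at $k = 0$ for the low-spin currents. Such terms arise from the normal-ordering corrections incurred when the elementary symmetric expressions $e_i(J_1,\dots,J_r)$ in the free currents are rewritten in terms of the modes $H^i_k$ at self-dual level, combined with the explicit $s$-shift of the loop equations; equivalently, $A_i$ is the eigenvalue of $H^i_0$ on the twist vacuum fixed by the spectral curve data. I would compute this eigenvalue directly and identify it with $e_i$ evaluated on the progression of charges $\bigl(\tfrac{2j+s-r-1}{2(r-s)}\bigr)_{j=1}^{r-s}$. Two features then follow formally: since this is $e_i$ of only $r-s$ charges, $A_i$ vanishes for $i > r-s$, consistent with the pure annihilation obtained in the high-spin range; and since the reindexing $j \mapsto r-s+1-j$ negates each charge, the multiset is invariant under negation, so every odd-degree elementary symmetric polynomial vanishes and $A_i = 0$ for odd $i$.

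The main obstacle I anticipate is precisely this eigenvalue computation---showing that the scalars are \emph{exactly} $e_i$ of the stated progression, and not merely that some constants appear. This demands careful control of the normal-ordering anomaly in the twist field representation at self-dual level, together with a combinatorial identification of the resulting generating function of shifts with the elementary symmetric polynomials. Once the eigenvalues are pinned down, the homogeneity of the remaining constraints follows from the regularity orders recorded in the loop equations. As a consistency check, when $s = r-1$ there is a single charge equal to zero, all $A_i$ vanish, and the constraints reduce to those of \cite{CGG}; the genuinely new anomalies appear only for $s \leq r-2$, where two or more charges produce nonzero even $A_i$.
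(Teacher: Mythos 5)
Your overall strategy---feeding the loop equations of \cref{thm:loop:eq} into the correlator/descendant-potential dictionary and reading off mode constraints from the allowed pole orders---is the same as the paper's, and your extraction of the ranges is correct: annihilation for $k\geq 1$ with an eigenvalue surviving at $k=0$ when $i\in[r-s]$, and annihilation for $k\geq r-s-i+1$ when $r-s<i\leq r$. The paper implements the conversion by citing the identity $\operatorname{ad}_n\bigl(Z^{-1}\bigl(\sum_{k}H^i_k\,dx^i/x^{i+k}\bigr)Z\bigr)=\mathcal{E}^{(i)}_n$ from \cite{BKS24}, which transfers the loop equations verbatim into \eqref{eq:Wconst}.

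However, there is a genuine gap in precisely the step you flag as the crux: the identification of the scalars $A_i$. You propose to compute them as ``the eigenvalue of $H^i_0$ on the twist vacuum'', i.e.\ as a normal-ordering anomaly of the twist-field representation at self-dual level. That quantity is not $A_i$, and computing it would give the wrong constants. Indeed, acting on the vacuum $Z=1$, every normal-ordered $J$-string in \eqref{eq:Wmodes} vanishes, so $H^i_0\cdot 1$ reduces to the pure $j=i/2$ term $\bigl(\hbar/r\bigr)^i\tfrac{i!}{2^{i/2}(i/2)!}\Psi^{(i/2)}_r$; this is manifestly independent of $s$ (the dilaton shift never contributes), whereas $A_i$ depends on $s$. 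Concretely, for $(r,s)=(3,1)$ and $i=2$ the anomaly equals $-\hbar^2/9$, while $A_2=e_2(-1/4,1/4)=-1/16$. The paper's own \eqref{eq:H20} makes the discrepancy explicit: the anomaly $-\hbar^2\tfrac{r^2-1}{24}$ is only one summand of $rH^2_0 Z^{r,1}$, and the eigenvalue $\hbar^2 rA_2$ is the net result after the derivative terms act on $\log Z^{r,1}$, so $A_i$ is a property of the state $Z^{r,s}$, not of the representation. Where the constants actually get pinned down is on the loop-equation side, \emph{before} any $\mathcal{W}$-algebra enters: in the proof of \cref{thm:loop:eq} they arise from the diagonal entries $\tfrac{\hbar}{x}\tfrac{2i+s-r-1}{2(r-s)}$ of the matrix $\mathcal{D}$ in \eqref{eq:calD} (ultimately from the $\hbar$-correction in the wave-function identity \eqref{eq:xhat}), whose Laplace-expansion contribution $y^s\prod_{i=1}^{r-s}\bigl(y+\tfrac{1}{x}\tfrac{2i+s-r-1}{2(r-s)}\bigr)$ expands precisely into the elementary symmetric polynomials \eqref{eq:Akdef}. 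Once \cref{thm:loop:eq} is taken with these explicit constants, no further eigenvalue computation is needed or possible; your closing symmetry observations (the multiset of charges is negation-invariant, so $A_i=0$ for odd $i$, and $e_i$ of $r-s$ variables vanishes for $i>r-s$) are correct, but they belong to that loop-equation computation rather than to a $\mathcal{W}$-algebra anomaly.
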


For $s=r-1$, the constraints match with the Airy structures studied in \cite{BBCCN24,CGG}. For $s=1$, the constraints are a special case of the shifted Airy structures studied in \cite{BBKN} and match the $\mathcal{W}$-constraints found in \cite{YZ23}. In both of these cases, they uniquely fix the descendant potential. However, for other choices of $s$, we obtain new $\mathcal{W}$-constraints which do not uniquely fix the descendant potential. However, all is not lost; we define a reduced descendant potential, which plays the role of ``initial conditions'' for the $\mathcal{W}$-constraints. Once the reduced potential is given, the constraints uniquely fix the entire descendant potential.

\subsection*{Notation}
For an integer $n \geq 1$, we use the notation $[n] = \set{ 1,2,\ldots,n }$, and the notation $z_{[n]}$ for the set $\set{ z_1,\ldots,z_n }$.

\subsection*{Acknowledgements}
V.B. is supported by the Natural Sciences and Engineering Research Council of Canada. 
N.K.C. acknowledges the support of the ERC Starting Grant 948885, and the Royal Society University
Research Fellowship. 
A.G. was supported by an ETH Fellowship (22-2~FEL-003) and a Hermann-Weyl-Instructorship from the Forschungsinstitut für Mathematik at ETH Zürich. 
S.S. is supported by the Netherlands Organization for Scientific Research.

\addtocontents{toc}{\protect\setcounter{tocdepth}{2}}}
\section{Generalized topological recursion and determinantal formulas}
\label{sec:gen:TR:det:formulas}

In this section we give a geometric meaning for the correlators calculated by the
generalized topological recursion of~\cite{ABDKS25-gTR} on the $(r,s)$ spectral curve. We consider the nowhere semisimple cohomological field theory consisting of the classes $\set{ \Theta_{g,n}^{r,s} }_{g,n}$, which are defined as the top degrees of the Chiodo classes. We show that descendant integrals for the $\Theta^{r,s}$-classes is calculated by the generalized topological recursion on the $(r,s)$ spectral curve. This yields an explicit determinantal formula for these intersection numbers.

\subsection{Chiodo classes, topological recursion and determinantal formulas}
We first introduce the Chiodo classes and recall the main result of~\cite[Section~9.2.1]{Gia21} and~\cite[Proposition~3.1]{ABDKS24-logTR}, adjusted to our conventions and notation. Those results provide two different ways of computing intersection theory for the Chiodo classes, either through the Eynard--Orantin topological recursion or through a determinantal formula. 

\subsubsection{Chiodo classes}
Fix two integers $r, k$ and an integral $n$-tuple $a = (a_1,\dots,a_n) \in \Z^n$ satisfying $r \ge 2$ and the modular constraint $a_1 + \cdots + a_n \equiv k (2g-2+n) \pmod{r}$. Recall the definition of the moduli space $\smash{\Mbar_{g,a}^{r,k}}$ of twisted spin curves~\cite{Jar00,Chi08-twisted}: it parametrizes stable curves $(C,p_1,\dots,p_n,L)$ of genus $g$ with $n$ marked points and a line bundle $L$ on $C$ satisfying $L^{\otimes r} \cong \omega_{\log}^{\otimes k}(- \sum_i a_i p_i)$, where $\omega_{\log} \coloneqq \omega(\sum_{i} p_i)$ is the log-canonical bundle. It has a universal curve and a universal line bundle
\begin{equation}
	\pi \colon \mathcal{C} \longrightarrow \Mbar_{g,a}^{r,k},
	\qquad\qquad
	\mc{L} \longrightarrow \mathcal{C},
\end{equation}
and it comes with the forgetful map $\epsilon \colon \Mbar_{g,a}^{r,k} \to \Mbar_{g,n}$ to the moduli space of curves which drops the choice of the line bundle. The Chern polynomial of the derived pushforward of the universal line bundle give a natural collection of classes on the moduli space of curves.

\begin{definition}
	Define the \emph{Chiodo class} as
	\begin{equation}
		C_{g,n}^{r,k+r}(a;\tau)
		\coloneqq
		\epsilon_{\ast} c(- \mathsf{R}^{\bullet}\pi_{\ast}\mc{L};\tau)
		\in R^{*}(\Mbar_{g,n})[\tau],
	\end{equation}
	where $\mathsf{R}^{\bullet}\pi_{\ast}\mc{L}$ is the derived pushforward of $\mc{L}$,
	\begin{equation}
		c(-\mathsf{E}^{\bullet};\tau)
		=
		\exp\left(
			\sum_{d \ge 1} (-\tau)^{d} (d-1)! \, \mathrm{ch}_d(\mathsf{E}^{\bullet})
		\right)
	\end{equation}
	is the Chern polynomial of $-\mathsf{E}^{\bullet} $ (where the minus sign should be interpreted as a minus sign in the Grothendieck ring, or alternatively a shift of the complex $\mathsf{E}^{\bullet} $ in the derived category), and $R^{*}(\Mbar_{g,n})$ is the tautological ring of the moduli space of curves. For the total Chern class (that is, when $\tau = 1$), we simply write $C_{g,n}^{r,k+r}(a)$.
\end{definition}

The fact that the Chiodo class belongs to the tautological ring is a consequence of Chiodo's formula~\cite{Chi08-class} for the Chern characters of $\mathsf{R}^{\bullet}\pi_{\ast}\mc{L}$, which in turn generalizes  Mumford's and Bini's computations~\cite{Mum83,Bin03}, for $r = 2$ and $k = -1$ and for $r = 2$ and arbitrary $k$ respectively, to arbitrary values of $r$. The analysis of the pushforward along the forgetful map $\epsilon$ was carried out in~\cite{JPPZ17}. See~\cite{GLN23} for a list of known properties satisfied by the Chiodo class. Of importance for us is that the Chiodo classes form a semisimple cohomological field theory (in general, without flat unit).

From now on, we are going to fix $k = s-r$ with $s \in [r-1]$, which motivates the shift above. For $s$ in this range and $0 \leq a_i \leq r-1$, $i=1,\dots, n$, a Riemann--Roch computation shows that $\mathsf{R}^{\bullet}\pi_{\ast}\mc{L}$ is a vector bundle of rank
\begin{equation}\label{eq:deg}
	D^{r,s}_{g;a} \coloneqq g - 1 + \frac{(2g-2+n)(r-s)+|a|}{r}. 
\end{equation}
Thus $C_{g,n}^{r,s}(a;\tau) = \sum_{d=0}^{D^{r,s}_{g;a}} [C_{g,n}^{r,s}(a)]_d \, \tau^{d}$, where by $[\alpha]_d$ we denote the $d$-th homogeneous component of a class $\alpha \in R^*(\overline{\mathcal{M}}_{g,n})$.

\subsubsection{Chiodo classes and topological recursion}
Since the Chiodo classes form a semisimple cohomological field theory, it follows from \cite{Eyn14,DOSS14} that descendant integrals of the Chiodo classes are calculated by the Eynard--Orantin topological recursion. The Eynard--Orantin topological recursion~\cite{EO07} takes as input a spectral curve, which consists of a Riemann surface $\Sigma$, two meromorphic functions $x$ and $y$ on $\Sigma$, and a Bergman kernel $B$ on $\Sigma^2$. It outputs a collection of symmetric $n$-differentials $W_{g,n}(z_{[n]})$ on $\Sigma^n$ (called \emph{genus-$g$, $n$-point correlators}) for $g \in \mathbb{Z}_{\geq 0}$, $n \in \mathbb{Z}_{>0}$ and $2g-2+n>0$.

The relation between intersection theory of the Chiodo classes and topological recursion is stated in the following proposition, which was proved in \cite[Section~9.2.1]{Gia21}. It generalizes to arbitrary values of $t \in \C^*$ the computations of \cite{LPSZ17}, performed for $t = 1$, which in turn generalize the computations of \cite{SSZ15} performed in the case $t = 1$ and $s = r+1$.

\begin{proposition}[\cite{SSZ15,LPSZ17,Gia21}] \label{prop:Gia-t}
	Let $r\geq 2$, $s \in [r-1]$, and $0 \neq t \in \mathbb{C}$. Consider the spectral curve $(\Sigma,x,y,B)$, where
	\begin{equation}
		\Sigma = \mathbb{P}^1,
		\qquad
		x = z^r - t \log(z),
		\qquad
		y = z^{s-r}
		\qquad
		B = \frac{dz_1 dz_2}{(z_1-z_2)^2}.
	\end{equation}
	For $2g-2+n >0$, the correlators $W_{g,n}(z_{[n]};t)$ produced by the Eynard--Orantin topological recursion on this spectral curve take the form
	\begin{equation}
		W_{g,n}(z_{[n]};t)
		=
		\frac{(-1)^n}{(s-r)^{2g-2+n}}
		\sum_{\substack{ k_1,\dots,k_n \ge 0 \\ a_1,\dots,a_n \in \set{0,\ldots, r-1} }}
			\int_{\overline{\mathcal{M}}_{g,n}}
				\left( \frac{t}{r} \right)^{D_{g;a}^{r,s}}
				C_{g,n}^{r,s}\left(a; \tfrac{r}{t}\right)
				\prod_{i=1}^n \psi^{k_i} d \theta_{k_i, a_i}(z_i;t),
	\end{equation}
	where the one-forms $d \theta_{k,a}$ are given by
	\begin{equation}\label{eq:theta}
		d \theta_{k,a}(z;t)
		\coloneqq
		r \, d \left( -\frac{d}{d x} \right)^{k+1} \frac{z^{r-a}}{r-a}
	\end{equation}
	for $x = z^r - t \log(z)$.
\end{proposition}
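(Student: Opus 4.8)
The plan is to establish \cref{prop:Gia-t} by reducing it to the already-cited computation for the total Chern class ($t=1$) and then controlling the $t$-dependence by a scaling argument. First I would recall the structure of the Eynard--Orantin recursion: the correlators $W_{g,n}$ are determined by the local behaviour of the spectral curve near the (single, higher-order) zero of $dx$ at $z=0$, together with the Bergman kernel $B$. For the curve $x = z^r - t\log(z)$, one has $dx = (rz^r - t)\,dz/z$, so the critical points are at $z^r = t/r$; crucially, the ramification structure and the local Airy-type data depend on $t$ only through an overall rescaling. The key idea is that substituting $z \mapsto \lambda z$ with $\lambda^r = t/r$ (or an equivalent rescaling that normalises $t$) maps the $t$-dependent curve to the $t=1$ curve up to explicit homogeneity factors, so that the $t$-dependence of $W_{g,n}$ is governed entirely by the scaling weights of $x$, $y$, and $B$.

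The main technical step is to match the two sides of the claimed formula order by order. On the geometric side, I would invoke the established result \cite{LPSZ17} (itself building on \cite{SSZ15}) that for $t=1$ the topological recursion correlators expand as intersection numbers against the total Chiodo class $C_{g,n}^{r,s}(a)$ paired with the basis one-forms $d\theta_{k,a}$. To promote this to arbitrary $t$, I would track how each ingredient scales: the factor $(s-r)^{-(2g-2+n)}$ comes from the normalisation of $y=z^{s-r}$ near the ramification point, the factor $(t/r)^{D^{r,s}_{g;a}}$ arises because the Chiodo class lives in cohomological degree matching the rank $D^{r,s}_{g;a}$ from \cref{eq:deg}, and the rescaling of the Chern polynomial parameter $\tau = r/t$ in $C_{g,n}^{r,s}(a;\tfrac{r}{t})$ encodes precisely how the homogeneous pieces $[C_{g,n}^{r,s}(a)]_d$ get weighted by powers of $t$. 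Concretely, I would verify that $\sum_d [C]_d\,(t/r)^{D-d}\cdot(\text{powers from }d\theta) $ reorganises into $(t/r)^{D}C(a;r/t)$ after collecting the degree-$d$ contributions, using that $C(a;\tau)=\sum_d [C]_d\,\tau^d$.

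I would then confirm the form of the one-forms $d\theta_{k,a}$ in \cref{eq:theta}: these are the standard auxiliary differentials built from the local expansion of $y\,dx$, where the operator $(-d/dx)^{k+1}$ produces the $\psi$-class insertions at marked point with primary field $a$, and the prefactor $z^{r-a}/(r-a)$ is the correct antiderivative compatible with $x=z^r - t\log(z)$. The remaining work is bookkeeping: checking that the combinatorial normalisations (the factor $r$, the residue conventions, and the sign $(-1)^n$) are consistent with the recursion kernel $K(z_0,z)$ of \cite{EO07} specialised to this curve.

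The hardest part will be verifying that the scaling argument is fully rigorous at the level of the recursion rather than merely formal. Because $x$ has a logarithmic term, the rescaling $z\mapsto \lambda z$ does not act on $x$ by a clean homogeneity (it shifts the $\log$ by a constant), so I must argue that this additive constant drops out of $dx$ and hence does not affect the correlators, which depend only on $dx$ and the local primitive of $y\,dx$. I expect the delicate point to be controlling the interaction between the logarithmic shift and the degree filtration on the Chiodo class, i.e.\ showing cleanly that the parameter substitution $\tau \mapsto r/t$ together with the prefactor $(t/r)^{D^{r,s}_{g;a}}$ exactly reproduces the $t$-dependence predicted by the scaling weights, uniformly in $(g,n)$ and in the primary fields $a$.
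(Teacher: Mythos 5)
The first thing to note is that the paper does not prove this proposition at all: it is imported wholesale from \cite[Section~9.2.1]{Gia21} (building on \cite{SSZ15,LPSZ17}), so there is no in-paper argument to compare against. Your proposal therefore does something genuinely different — it supplies an actual derivation, reducing the general-$t$ statement to the $t=1$ case of \cite{LPSZ17} by rescaling. That route is sound, and the power counting does close up. Concretely: with $\lambda^r = t$ (not $\lambda^r = t/r$ as you first wrote — your parenthetical hedge is doing real work there, since $\lambda^r=t/r$ lands you on $z^r/r-\log z$ rather than the $t=1$ curve), one has $x(\lambda z;t) = t\,x(z;1) - t\log\lambda$, $y(\lambda z) = t^{(s-r)/r}y(z)$, and $B$ is invariant; the additive constant is irrelevant since only $dx$ enters the recursion, and the homogeneity $(x,y)\mapsto(\alpha x,\beta y)$, $W_{g,n}\mapsto(\alpha\beta)^{-(2g-2+n)}W_{g,n}$ gives $W^{(t)}_{g,n}(\lambda z_{[n]}) = t^{-\frac{s}{r}(2g-2+n)}W^{(1)}_{g,n}(z_{[n]})$. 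Combining this with the transformation $d\theta_{k,a}(z;1) = t^{k+1}\lambda^{a-r}\,\phi^*d\theta_{k,a}(\,\cdot\,;t)$ under $\phi(z)=\lambda z$, the degree-$d$ piece $[C^{r,s}_{g,n}(a)]_d$ acquires the exponent
\begin{equation}
	-\tfrac{s}{r}(2g-2+n) + \textstyle\sum_i (k_i+1) + \tfrac{|a|-rn}{r}
	\;=\;
	(3g-3+n-d) - \tfrac{s}{r}(2g-2+n) + \tfrac{|a|}{r}
	\;=\;
	D^{r,s}_{g;a} - d,
\end{equation}
where the first equality uses the dimension constraint $\sum_i k_i = 3g-3+n-d$ and the second uses \cref{eq:deg}; this is exactly the weight in $(t/r)^{D^{r,s}_{g;a}}C^{r,s}_{g,n}(a;\tfrac{r}{t}) = \sum_d [C^{r,s}_{g,n}(a)]_d\,(t/r)^{D^{r,s}_{g;a}-d}$. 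Two further small corrections: your opening claim that the correlators are governed by a ``single, higher-order zero of $dx$ at $z=0$'' describes the $t=0$ limit, not the $t\neq 0$ curve (you correct this in the next sentence — for $t \neq 0$ there are $r$ simple ramification points at $z^r = t/r$); and the factor $(s-r)^{-(2g-2+n)}$ is not produced by the scaling but is inherited verbatim from the $t=1$ input formula. With those repairs, your argument is a legitimate, self-contained proof modulo the cited $t=1$ result, which is arguably more useful to a reader than the paper's bare citation.
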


\begin{remark} 
	The restrictions $s \in [r-1]$ and $0 \leq a_i \leq r-1$ are not limiting. Indeed, shifting the parameter $s$ by $r$ results in a multiplication by $\kappa$-classes \cite{Eyn14}, while shifting $a_i$ by $r$ results in a multiplication by $\psi$-classes (see \cite[Theorem~4.1]{GLN23}). Thus, knowing the descendant integrals for parameters $s$ and $a_i$ in the above ranges suffices to recover all descendant integrals for Chiodo classes for arbitrary values of the parameters (except when $s \equiv 0 \pmod{r}$).
\end{remark}

As usual, we define the unstable correlators to be
\begin{equation}
	W_{0,1} \coloneqq y\, dx = z^{s-r} d(z^r-t \log(z)),
	\qquad\qquad
	W_{0,2} \coloneqq B = \frac{dz_1 dz_2}{(z_1-z_2)^2}.
\end{equation}
It will then be useful to assemble the correlators into the generating series
\begin{equation}\label{eq:hbargs}
	W_n(z_{[n]};\hbar;t)
	\coloneqq
	\sum_{\substack{g \ge 0}} \hbar^{2g-2+n} W_{g,n}(z_{[n]};t),
	\qquad
	n \ge 1,
\end{equation}
called the \emph{$n$-point correlator}. We get:
\begin{multline} \label{eq:Gia-t}
	W_n(z_{[n]};\hbar;t) -\delta_{n,1} z^{s-r}d(z^r-t\log z)-\delta_{n,2} \frac{dz_1dz_2}{(z_1-z_2)^2}
	= \\ 
	\sum_{\substack{g \ge 0\\ 2g-2+n>0}} \!\!
	\left( \frac{-\hbar}{s-r} \right)^{2g-2+n} \!\!\!\!\!\!\!\!\!\!
	\sum_{\substack{ k_1,\dots,k_n \ge 0 \\ a_1,\dots,a_n \in \set{0,\ldots, r-1} }}
		\int_{\overline{\mathcal{M}}_{g,n}}
			\left( \frac{t}{r} \right)^{D^{r,s}_{g;a}} \,
			C_{g,n}^{r,s}(a;\tfrac{r}{t})
		\prod_{i=1}^n \psi_i^{k_i} \, d \theta_{k_i,a_i}(z_i;t). 
\end{multline}

\subsubsection{Chiodo classes, generalized topological recursion and determinantal formulas}
It was found more recently in~\cite{ABDKS24-logTR} that the same differentials can be computed through a determinantal formula. The construction is a special case of the framework of generalized topological recursion developed in~\cite{ABDKS25-gTR}.

Just like the Eynard--Orantin topological recursion, the generalized topological recursion takes as input a spectral curve. In this context, it consists of a Riemann surface $\Sigma$, two differentials $dx$ and $dy$ on $\Sigma$, a Bergman kernel $B$ on $\Sigma^2$, and a set $\mathcal{P} \subset \Sigma$ which is a subset of the so-called set of special points. The special points are those $o \in \Sigma$ where the local behavior of $dx$ and $dy$ is given by
\begin{align}
	dx(z) = a z^{p-1} \, dz \, \bigl( 1 + \bigO(z) \bigr),
	\qquad
	dy(z) = b z^{q-1} \, dz \, \bigl( 1 + \bigO(z) \bigr),
	\qquad
	a,b \ne 0, \
	p, q \in \Z,
\end{align}
where $z$ is a local coordinate near $o$, with $p+q>0$ and $p$ and $q$ not simultaneously equal to $1$. 

For brevity, we bypass the definition of the generalized topological recursion here and simply state the resulting determinantal formula for the differentials in the case of interest. Let $r\geq 2$, $s \in [r-1]$, and $ t \in \mathbb{C}^*$. Consider the spectral curve $(\Sigma,dx,dy,B)$, where
\begin{equation}
	\Sigma = \mathbb{P}^1,
	\;
	dx = d(z^r - t \log(z)),
	\;
	dy = d(z^{s-r}),
	\;
	B = \frac{dz_1 dz_2}{(z_1-z_2)^2},
	\;
	\mathcal{P} = \Set{ z = e^{\frac{2 \pi a \iu}{r}} \left(\tfrac{t}{r}\right)^{1/r} }_{a \in [r]}.
\end{equation}
Here a choice of $r$-th root of $t/r$ is fixed once and for all. The special points of the above spectral curve are at $z = e^{2 \pi a \iu/r} ( t/r )^{1/r}$, $a \in [r]$. Thus, we chose $\mathcal{P}$ to be the full set of special points. The generalized topological recursion then constructs differentials $\omega_{n}(z_{[n]};\hbar;t)$ through the following determinantal formula (see~\cite[Theorem~3.6]{ABDKS25-gTR}):
\begin{equation} \label{eq:CloseFormula-t}
	\omega_n(z_{[n]};\hbar;t)
	\coloneqq
	\left( \prod_{i=1}^n \mathsf{O}_{x_i} \right)
	\sum_{\sigma\in C_n}
		\sgn(\sigma)
		\prod_{i=1}^n
		\frac{\sqrt{dw^+_i dw^-_{\sigma(i)}}}{w^+_i-w^-_{\sigma(i)}},
\end{equation}
where $\mathsf{O}_{x_i}$ and $w^\pm_i$ are the operators and functions given by
\begin{equation}\label{eq:O}
	\mathsf{O}_{x_i}
	\coloneqq
	-
	\sum_{k=-1}^\infty
		\left(-d_i \frac{1}{dx_i} \right)^k \,
		[u_i^k] \, e^{ u_i(\mc{S}(u_i\hbar\partial_{y_i})x_i-x_i) }
	\qquad\text{and}\qquad
	w^\pm_i
	\coloneqq
	e^{\pm \frac{1}{2} u_i \hbar \partial_{y_i}} z_i,
\end{equation}
for $\mc{S}(u) = u^{-1}(e^{u/2} - e^{-u/2})$, $x_i = z_i^r-t\log z_i$, $y_i = z_i^{s-r}$, and $C_n \subset S_n$ being the subset of permutations consisting of $n$-cycles. 

We remark that the $k = -1$ term in the summation in the definition of $\mathsf{O}_x$ in~\cref{eq:O} may appear unusual, but it is only used in the case $n = 1$, where it serves to convert
\begin{equation}
	\frac{\sqrt{dw^+_1 dw^-_1}}{w^+_1 - w^-_1} = \frac{dy_1}{u_1 \hbar}
	\qquad\text{into}\qquad
	\left(d_1 \frac{1}{dx_1}\right)^{-1} dy_1 = y_1 dx_1.
\end{equation}
An alternative presentation would be to define $\mathsf{O}_{x_i}$ using the sum $\sum_{k = 0}^\infty$ and then include an explicit term $\delta_{n,1} y_1 dx_1$ on the right-hand side of~\eqref{eq:CloseFormula-t}.

The main result is that, for $t \in \mathbb C^*$, the $n$-point correlators $W_n(z_{[n]};\hbar;t)$ constructed from the Eynard--Orantin topological recursion and the $n$-point correlators $\omega_n(z_{[n]}; \hbar; t)$ constructed by generalized topological recursion coincide.

\begin{proposition}[\cite{ABDKS24-logTR}] \label{prop:ClosedFormula-t}
	For $t \in \mathbb C^*$,
	\begin{equation}
		W_n(z_{[n]}; \hbar; t)= \omega_n(z_{[n]};\hbar;t),
	\end{equation}
	where $W_n(z_{[n]}; \hbar; t)$ is given by \eqref{eq:Gia-t} and $\omega_n(z_{[n]};\hbar;t)$ is given by \eqref{eq:CloseFormula-t}. In other words, the descendant integrals of the Chiodo classes are calculated by the determinantal formula \eqref{eq:CloseFormula-t}.
\end{proposition}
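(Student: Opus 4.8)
The plan is to treat the proposition as the specialization to this family of $(r,s)$ spectral curves of the general $x$--$y$ duality of ABDKS, rather than to compare the two sides term by term: the Chiodo integrals appearing in $W_n$ via \eqref{eq:Gia-t} are not elementary, so a direct match is hopeless. The first observation is that for $t \in \C^*$ one has $dx = (rz^r - t)\frac{dz}{z}$, whose zeros are the $r$ simple points $z^r = t/r$ collected in $\mc{P}$, while $dy = (s-r)z^{s-r-1}\,dz$ is nonvanishing there; hence $(\Sigma,x,y,B)$ is a bona fide Eynard--Orantin spectral curve with only simple ramification, and $W_n$ is its genuine EO output. The task is thus to establish the closed determinantal representation \eqref{eq:CloseFormula-t} of these EO correlators.

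\textbf{Key steps.} First I would recall the ABDKS closed formula, in which the EO correlators of a spectral curve are written determinantally from ``wave functions'' obtained by quantizing the curve, the universal kernel of the construction being the function $\mc{S}(u) = u^{-1}(e^{u/2} - e^{-u/2})$ that already appears in \eqref{eq:O}. Concretely: (i) identify the shifted arguments $w^\pm_i = e^{\pm\frac12 u_i\hbar\partial_{y_i}}z_i$ as the points entering the Christoffel--Darboux-type kernel $\frac{\sqrt{dw^+\,dw^-}}{w^+ - w^-}$, and the cyclic sum $\sum_{\sigma\in C_n}\sgn(\sigma)\prod_i(\cdots)$ as the standard expression of the connected $n$-point correlator of a KP tau function built from that kernel, which is where the free-fermionic/determinantal structure of \cite{ABDKS25-KP} enters; (ii) identify the operator $\mathsf{O}_{x_i}$, assembled from $\mc{S}(u_i\hbar\partial_{y_i})x_i - x_i$ and the differential operator $\left(-d_i\,\frac{1}{dx_i}\right)^k$, as the map that converts the $y$-shifted kernel back into the genuine $x$-differentials of EO topological recursion; (iii) verify the unstable data, i.e.\ that \eqref{eq:CloseFormula-t} reproduces $\omega_{0,1} = y\,dx$ and $\omega_{0,2} = B$, handling with care the anomalous $k = -1$ term discussed after \eqref{eq:O}.

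As a structural backbone one can instead prove the equality by uniqueness: show that the differentials defined by \eqref{eq:CloseFormula-t} satisfy the linear and quadratic abstract loop equations at each of the $r$ simple ramification points in $\mc{P}$, together with the correct polar behaviour (poles only at $\mc{P}$, with vanishing residues), and then invoke the uniqueness of the EO solution to these loop equations to conclude $\omega_n = W_n$. Matching the global normalization --- the prefactor $\left(-(s-r)^{-1}\right)^{2g-2+n}$ and the powers $(t/r)^{D^{r,s}_{g;a}}$ in \eqref{eq:Gia-t} --- is then a bookkeeping check on the $\hbar$- and $t$-gradings of the two expressions.

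The main obstacle is the logarithmic term $-t\log z$ in $x$: it places the curve outside the standard rational setting, so both the determinantal representation and the underlying $x$--$y$ swap must be established in the ``logarithmic'' regime, which is precisely the technical content of \cite{ABDKS24-logTR}. One must show that the formal operator $\mathsf{O}_{x_i}$, whose symbol $\mc{S}(u_i\hbar\partial_{y_i})x_i - x_i$ involves $\log z_i$, is well defined as a formal power series in $\hbar$; that the resulting $\omega_n$ are meromorphic differentials with poles only at $\mc{P}$; and that their $\hbar$-expansion agrees order by order with $W_{g,n}$. Controlling these formal manipulations in the presence of the logarithm, and in particular the interplay between the $\partial_{y}$-shifts and the anomalous $k = -1$ term, is where the real work lies.
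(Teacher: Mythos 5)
Your proposal is correct in substance but takes a genuinely different route from the paper. You rightly identify the crucial hypothesis check --- for $t \in \C^*$ one has $dx = (rz^r - t)\tfrac{dz}{z}$, so the points of $\mathcal{P}$ are exactly the $r$ simple zeros of $dx$, and $dy$ does not vanish there --- and this is precisely the condition the paper's proof verifies. But from there the two arguments diverge. The paper treats \eqref{eq:CloseFormula-t} not as something to be \emph{established} for the Eynard--Orantin correlators, but as what the generalized topological recursion outputs by construction (\cite[Theorem~3.6]{ABDKS25-gTR}); the whole proof then reduces to citing the comparison theorem \cite[Theorem~2.15]{ABDKS25-gTR}, which says that generalized TR coincides with Eynard--Orantin TR whenever $\mathcal{P}$ consists of simple zeros of $dx$ at which $dy \neq 0$. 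Since $W_n$ is the EO output by \cref{prop:Gia-t}, equality follows in two lines. You instead propose to prove the determinantal representation of the EO correlators from scratch --- either through the free-fermion/KP identification of the kernel, or through the sounder structural backbone of verifying the abstract loop equations and pole structure at the $r$ simple ramification points and invoking uniqueness of the EO solution. That strategy would work (it is essentially how such equivalences are proved in the literature), but it amounts to re-deriving the technical content of \cite{ABDKS24-logTR} and \cite{ABDKS25-gTR}, and your proposal leaves exactly those hard steps (well-definedness of $\mathsf{O}_x$ in the logarithmic regime, order-by-order matching in $\hbar$) as acknowledged open work. In short: the paper buys brevity by routing the argument through the notion of generalized TR and its ready-made comparison theorem, whereas your route is self-contained in principle but turns a citation-level proposition into a substantial technical program; since you yourself note that this program \emph{is} the content of \cite{ABDKS24-logTR}, the cleanest fix to your write-up is to close it by citation, which is effectively what the paper does, just one structural level higher.
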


\begin{proof}
	This follows from a key property of generalized topological recursion: when the set $\mathcal{P}$ consists solely of simple zeros of $dx$, and $dy$ does not vanish on $\mathcal{P}$, the differentials produced by the generalized topological recursion coincide with those of the original topological recursion of~\cite{EO07}; see~\cite[Theorem 2.15]{ABDKS25-gTR}. Since this condition holds for the spectral curve considered here (as $t \neq 0$), the proposition follows.
\end{proof}

\subsection{\texorpdfstring{$\Theta^{r,s}$}{Theta}-classes, generalized topological recursion and determinantal formulas}
\Cref{prop:ClosedFormula-t} says that intersection theory for the Chiodo classes (for $t \neq 0$) is calculated by the determinantal formula \eqref{eq:CloseFormula-t}. In this section we study the limit as $t \to 0$. We show that the intersection theory of the top degrees of the Chiodo classes is also given by a determinantal formula and obtained via the generalized topological recursion.

\subsubsection{\texorpdfstring{$\Theta^{r,s}$}{Theta}-classes}
The $t \to 0$ limit of \cref{prop:Gia-t} motivates the definition of the $\Theta^{r,s}$-classes, which are the top degrees of the Chiodo classes.

\begin{definition}\label{def:theta}
	Define the classes
	\begin{equation}
		\Theta^{r,s}_{g,n}(a)
		\coloneqq
		\frac{r^{\frac{(2g-2+n)(r-s)+|a|}{r}}}{(s-r)^{2g-2+n}}
		\bigl[ C_{g,n}^{r,s}(a) \bigr]_{D_{g;a}^{r,s}} \,,
	\end{equation}
	which are the (appropriately normalized) top degrees of the Chiodo classes.
\end{definition} 

As outlined in~\cite{CGG}, the collection of classes $\set{ \Theta^{r,s}_{g,n}(a) }_{g,n}$ with $a \in [r-1]$ defines a cohomological field theory of rank $r - 1$. More precisely, consider the vector space $ V \coloneqq \operatorname{span}_{\mathbb Q} (v_1,\ldots,v_{r-1})$, along with the pairing 
\begin{equation}
	\eta (v_{a_1},v_{a_2}) = \delta_{a_1+a_2,r}.
\end{equation}

\begin{proposition}\label{prop:CohFT}
	The collection of maps $\Theta^{r,s}_{g,n} \colon V^{\otimes n} \to R^* (\overline{\mathcal{M}}_{g,n}) $ given by the assignment
	\begin{equation}
		v_{a_1} \otimes \cdots \otimes v_{a_n} \longmapsto \Theta^{r,s}_{g,n}(a_1,\ldots,a_n)
	\end{equation}
	and extended by linearity define a cohomological field theory of rank $r - 1$ without a flat unit on $(V,\eta)$. Moreover, this cohomological field theory satisfies the following modified unit axiom:
	\begin{equation}\label{eq:modunit}
		 \Theta^{r,s}_{g,n+1}(a_1,\ldots,a_n,s)
		 =
		 \psi_{n+1} \cdot p^* \Theta^{r,s}_{g,n} (a_1,\cdots,a_n),
	\end{equation}
	where $p \colon \overline{\mathcal{M}}_{g,n+1} \to \overline{\mathcal{M}}_{g,n}$ is the forgetful map that forgets the last marked point.
\end{proposition}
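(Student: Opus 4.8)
The plan is to deduce both statements from the corresponding structures on the Chiodo classes by extracting top degrees. Recall that the collection $\{C_{g,n}^{r,s}(a)\}$ is a semisimple cohomological field theory on $\mathbb{C}^r = \operatorname{span}(v_0,\dots,v_{r-1})$ with pairing proportional to $\delta_{a+b \equiv 0 \bmod r}$. The $S_n$-equivariance of the classes $\Theta^{r,s}_{g,n}$ is inherited verbatim from that of the Chiodo classes, so the real content is the two gluing axioms together with the modified unit axiom \eqref{eq:modunit}. The crucial piece of bookkeeping is that the rank $D^{r,s}_{g;a}$ of \eqref{eq:deg}, which is precisely the degree selected in \cref{def:theta}, is additive under degeneration.

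First I would verify this additivity directly. A short computation shows that under the non-separating gluing $\Mbar_{g-1,n+2}\to\Mbar_{g,n}$ one has $D^{r,s}_{g-1;(a,b,r-b)} = D^{r,s}_{g;a}$ for every $b\in[r-1]$, while under separating gluing the two summands add up to $D^{r,s}_{g;a}$; moreover the exponents of $r$ and of $(s-r)$ in the normalization of \cref{def:theta} combine in exactly the same pattern. Since pullback along a boundary map preserves cohomological degree, applying the Chiodo gluing relation and extracting the degree-$D^{r,s}_{g;a}$ part turns it, after absorbing the intrinsic factor $r$ of the non-separating Chiodo gluing into the normalization, into the desired gluing relation for $\Theta^{r,s}$ with pairing $\eta(v_a,v_b)=\delta_{a+b,r}$.

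The clean point that confines the node sum to the nondegenerate block $V=\operatorname{span}(v_1,\dots,v_{r-1})$, and thereby removes the flat unit, is a rank count in the Ramond sector. When the node carries $v_0$, both half-edges are labelled $0$, so the two new insertions contribute $0$ rather than $r$ to $|a|$; by \eqref{eq:deg} the rank of the corresponding Chiodo bundle then drops by one, to $D^{r,s}_{g;a}-1$. Hence its degree-$D^{r,s}_{g;a}$ component — the one selected by the top-degree extraction on the left-hand side — vanishes identically, and in the separating case the two ranks sum to $D^{r,s}_{g;a}-1$ for the same reason, killing that term too. Thus only $b\in[r-1]$ survive, the pairing is the nondegenerate $\eta$ on $V$, and $\{\Theta^{r,s}_{g,n}(a)\}_{a\in[r-1]}$ is a cohomological field theory without flat unit.

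It remains to prove the modified unit axiom \eqref{eq:modunit}, which I expect to be the main technical obstacle. For the insertion $a_{n+1}=s$ I would compare the universal line bundles along the forgetful map $p$: since $\omega_{\log}$ on the $(n+1)$-pointed curve differs from the pullback of $\omega_{\log}$ on the $n$-pointed curve by the last section, the defining isomorphism $\mathcal{L}^{\otimes r}\cong\omega_{\log}^{\otimes(s-r)}(-\sum_i a_i p_i - s\,p_{n+1})$ yields $\mathcal{L}\cong p^*\mathcal{L}\,(-p_{n+1})$, where $p^*\mathcal{L}$ is the line bundle of the $n$-pointed theory pulled back to the universal curve. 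Feeding this into Chiodo's formula for $\mathrm{ch}(\mathsf{R}^\bullet\pi_*\mathcal{L})$, the $(-p_{n+1})$-twist produces a single factor of $\psi_{n+1}$ in top degree while the remaining Chern characters pull back from $\Mbar_{g,n}$; comparing with the degree count $D^{r,s}_{g;(a,s)}=D^{r,s}_{g;a}+1$ gives $[C^{r,s}_{g,n+1}(a,s)]_{\mathrm{top}}=\tfrac{s-r}{r}\,\psi_{n+1}\,p^*[C^{r,s}_{g,n}(a)]_{\mathrm{top}}$. The constant $\tfrac{s-r}{r}$ is then exactly cancelled by the ratio of normalizations in \cref{def:theta} between $(g,n+1)$ and $(g,n)$, producing \eqref{eq:modunit}. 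The delicate part is tracking this local contribution at $p_{n+1}$ — equivalently, the orbifold Riemann–Roch bookkeeping underlying Chiodo's formula — so as to land on precisely one power of $\psi_{n+1}$ with the stated constant.
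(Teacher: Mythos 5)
Your first half — the CohFT axioms — is essentially the argument behind the proof the paper points to (the paper's own ``proof'' is a one-line citation of \cite[Proposition~2.6, Theorem~2.7]{CGG}, asserting it goes through verbatim for all $s \in [r-1]$): one extracts top degrees from the Chiodo gluing relations, uses that $D^{r,s}_{g;a}$ from \eqref{eq:deg} is additive under both separating and non-separating gluings (your computation of this is correct), and observes that the Ramond contribution $b=0$ at the node lowers the rank by one, so its degree-$D^{r,s}_{g;a}$ part vanishes; this is exactly what confines the state space to $V = \operatorname{span}(v_1,\ldots,v_{r-1})$ and removes the flat unit. Up to the usual bookkeeping of the factors of $r$ and $(s-r)$ in \cref{def:theta}, that part of your proposal is sound.

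The genuine gap is in your proof of the modified unit axiom \eqref{eq:modunit}. The isomorphism you assert, $\mathcal{L} \cong p^*\mathcal{L}(-p_{n+1})$, holds only over the locus where $p_{n+1}$ does not collide with the other markings. Over the boundary divisors $D_{i,n+1} \subset \Mbar_{g,n+1}$ (where $p_i$ and $p_{n+1}$ come together), the comparison map of universal curves contracts a rational bubble, and the correct global relation is of the form $\mathcal{L}^{\otimes r}\otimes\mathcal{O}(r\sigma_{n+1}) \cong \hat{p}^{\,*}(\mathcal{L}')^{\otimes r}\otimes\mathcal{O}\bigl(\textstyle\sum_i a_i B_i\bigr)$, where $B_i$ is the family of bubbles over $D_{i,n+1}$ and $\hat{p}$ is the induced map of universal curves; the coefficients $a_i$ are not divisible by $r$, so the twist does not disappear after taking $r$-th roots — it becomes an orbifold correction at the bubble nodes. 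Feeding the honest comparison into the Chern-class computation, the degree-$(D^{r,s}_{g;a}+1)$ part of $C^{r,s}_{g,n+1}(a,s)$ contains, besides the term $\frac{s-r}{r}\,\psi_{n+1}\,p^*[C^{r,s}_{g,n}(a)]_{\mathrm{top}}$ you want, additional classes supported on the divisors $D_{i,n+1}$ (essentially push-forwards of $[C^{r,s}_{g,n}(a)]_{\mathrm{top}}$ under the inclusions $D_{i,n+1}\cong \Mbar_{g,n}\hookrightarrow \Mbar_{g,n+1}$). Your argument does not show that these boundary-supported terms cancel, and they do not vanish for any formal reason; arranging their cancellation (equivalently, carrying out the JPPZ/Chiodo-formula bookkeeping at the colliding points) is precisely the technical content of the cited CGG proof. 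Since you yourself defer this as ``the delicate part'' without resolving it, the proof of \eqref{eq:modunit} — which is needed later, e.g.\ for \cref{prop:ic} and \cref{prop:YZ} — is incomplete as it stands.
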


\begin{proof}
	The proof of \cite[Proposition~2.6 and Theorem~2.7]{CGG} for the case $s = r-1$ can be followed without any change for any $s \in [r-1]$.
\end{proof}

Note the exclusion of insertions (also know as primary fields) with $a_i = 0$, which reduces the rank by one. Dropping these insertions is essential to obtain a cohomological field theory.

\begin{remark}
	Assuming that the $a_i$ are in the range $ 1 \leq a_i \leq r-1$, $\Theta^{r,s}_{g,n}(a)$ when $s  \leq 0 $ is of degree bigger than $ 3g-3+n $ and hence vanishes identically. On the other hand, when $s \geq r$, $\mathsf{R}^\bullet \pi_* \mathcal L$ is generically a complex and not  a vector bundle and thus the Chiodo class has terms in all cohomological degrees. This explains why we restrict ourselves to the range $s \in [r-1]$.
\end{remark}

We also note that the Dubrovin--Frobenius manifold defined by the cohomological field theory above is nowhere semisimple (for instance, $v_{r-1}$ is always a nilpotent element), and thus Teleman's classification result~\cite{Tel12} does not apply. A workaround was proposed in~\cite{CGG} for the case $s = r - 1$, using a deformation argument. This not only provides a method for computing the descendant integrals, but also produces relations in cohomology. 

Here we propose a different method for computing the intersection numbers based on the $t \to 0$ limit of \cref{prop:ClosedFormula-t}. Although this method does not address the class itself, but only the associated intersection numbers, it is much more direct. 

\subsubsection{The \texorpdfstring{$t \to 0$}{t to zero} limit}
First we recall the definition of the multi-factorial. For $m,r \in \mathbb{Z}_{>0}$,
\begin{equation}
	m!^{(r)}
	\coloneqq
	\begin{cases}
		m (m-r)!^{(r)} & m > r, \\
		m & 1 \leq m \leq r.
	\end{cases}
\end{equation}

\begin{theorem}[Determinantal formulas]\label{thm:WgnTheta}
	Let $W_{n}(z_{[n]};\hbar;t)$ be given by \eqref{eq:Gia-t} and $\omega_n(z_{[n]};\hbar;t)$ be given by \eqref{eq:CloseFormula-t}. Then
	\begin{equation}\label{eq:limit}
		\lim_{t \to 0} \; W_{n}(z_{[n]};\hbar;t)
		=
		\lim_{t \to 0} \; \omega_n(z_{[n]};\hbar;t).
	\end{equation}
	As a result, the descendant integrals of the $\Theta^{r,s}$-classes can be computed by the following determinantal formula:
	\begin{multline}\label{eq:Theta:rs}
		\sum_{g \ge 0} \left(-\frac{\hbar}{r}\right)^{2g-2+n}
		\sum_{\substack{ k_1,\dots,k_n \ge 0 \\ a_1,\dots,a_n \in [r-1] }}
			\int_{\overline{\mathcal{M}}_{g,n}}
				\Theta_{g,n}^{r,s}(a)
			\prod_{i=1}^n \psi_i^{k_i} \, d \xi_{k_i,a_i}(z_i) = \\
		\left( \prod_{i=1}^n \mathsf{O}_{x_i} \right)
		\sum_{\sigma\in C_n}
			\sgn(\sigma)
			\prod_{i=1}^n
			\frac{\sqrt{dw^+_i dw^-_{\sigma(i)}}}{w^+_i-w^-_{\sigma(i)}} -\delta_{n,1} z_1^{s-r}d (z_1^r) - \delta_{n,2} \frac{dz_1dz_1}{(z_1-z_2)^2}, 
	\end{multline}
	where $\mathsf{O}_{x_i}$, and $w^\pm_i$ are given by \cref{eq:O} for the choices $x_i = z_i^{r}$ and $y_i = z_i^{s-r}$, and the one-forms $d \xi_{k,a}(z)$ are given by
	\begin{equation}
		d \xi_{k,a}(z)
		\coloneqq
		(rk+a)!^{(r)} \frac{dz}{z^{rk+a+1}}.
	\end{equation}
\end{theorem}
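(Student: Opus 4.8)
The plan is to split the statement into the equality of limits \eqref{eq:limit} and the resulting determinantal formula \eqref{eq:Theta:rs}, and to do all the real work on the $W_n$-side. The equality \eqref{eq:limit} is almost free: \cref{prop:ClosedFormula-t} gives $W_n(z_{[n]};\hbar;t) = \omega_n(z_{[n]};\hbar;t)$ for every $t \in \mathbb{C}^*$, so once I exhibit the $t \to 0$ limit of one side the other must converge to the same thing. I would therefore compute $\lim_{t\to 0} W_n$ explicitly from \eqref{eq:Gia-t}, where the structure is transparent, and only afterwards read off what the limit looks like on the $\omega_n$-side.

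For the left-hand side I would start from \eqref{eq:Gia-t}. Since the coefficient of $\hbar^{2g-2+n}$ singles out one genus $g$, and the inner sum over $(k,a)$ is finite by the dimension constraint on $\Mbar_{g,n}$, the limit may be taken termwise, and it rests on two ingredients. First, writing $C_{g,n}^{r,s}(a;\tau) = \sum_{d=0}^{D^{r,s}_{g;a}} \bigl[C_{g,n}^{r,s}(a)\bigr]_d\,\tau^d$ gives
\begin{equation}
	\lim_{t\to 0}\Bigl(\tfrac{t}{r}\Bigr)^{D^{r,s}_{g;a}} C_{g,n}^{r,s}\bigl(a;\tfrac{r}{t}\bigr)
	= \bigl[ C_{g,n}^{r,s}(a) \bigr]_{D^{r,s}_{g;a}},
\end{equation}
so that only the top degree survives, exactly as in \cref{def:theta}. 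Second, specializing $x = z^r$ turns $\tfrac{d}{dx}$ into $\tfrac{1}{rz^{r-1}}\tfrac{d}{dz}$, and an induction on $k$ applied to \eqref{eq:theta} yields
\begin{equation}
	\lim_{t\to 0} d\theta_{k,a}(z;t) = r^{-k}\,(rk+a)!^{(r)}\,\frac{dz}{z^{rk+a+1}} = r^{-k}\, d\xi_{k,a}(z),
\end{equation}
the multifactorial appearing as the telescoping product $\prod_{j=0}^{k}(a+jr)$. The unstable pieces simply contribute $\delta_{n,1} z^{s-r} d(z^r)$ and $\delta_{n,2} B$.

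It then remains to reconcile the prefactors. On the support of a nonvanishing integral the dimension constraint $\deg \Theta^{r,s}_{g,n}(a) + |k| = 3g-3+n$, together with $\deg \Theta^{r,s}_{g,n}(a) = D^{r,s}_{g;a}$, forces $|k| = 3g-3+n - D^{r,s}_{g;a}$, so the accumulated factor $r^{-|k|}$ may be traded for $r^{D^{r,s}_{g;a}-(3g-3+n)}$; combining this with $\bigl(\tfrac{-\hbar}{s-r}\bigr)^{2g-2+n}$ reproduces exactly $\bigl(-\tfrac{\hbar}{r}\bigr)^{2g-2+n}$ and the normalization $r^{D^{r,s}_{g;a}-g+1}/(s-r)^{2g-2+n}$ of \cref{def:theta} (recall $D^{r,s}_{g;a}-g+1 = \tfrac{(2g-2+n)(r-s)+|a|}{r}$). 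This identifies $\lim_{t\to 0} W_n$ with the left-hand side of \eqref{eq:Theta:rs} plus the two unstable terms. For the right-hand side I would observe that the only $t$-dependence of the determinantal formula \eqref{eq:CloseFormula-t} sits inside $\mathsf{O}_{x_i}$: the shifts $w_i^\pm$, and hence the building blocks, are $t$-independent because $\partial_{y_i}$ does not involve $t$, while $t$ enters only through $x_i = z_i^r - t\log z_i$ in the exponential and through $dx_i$ in $-d_i\tfrac{1}{dx_i}$. The offending $\log z_i$ cancels in $\mc{S}(u_i\hbar\partial_{y_i})x_i - x_i$ (whose leading term is $x_i$ itself), and all remaining $t$-dependence is rational with denominators that are powers of $rz_i^r - t$, nonzero at $t=0$. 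Hence $\lim_{t\to 0}\omega_n$ is computed by simply setting $t = 0$, i.e.\ $x_i = z_i^r$, which is precisely the right-hand side of \eqref{eq:Theta:rs}.

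The step I expect to be the main obstacle is exactly this last commutation. As $t\to 0$ the $r$ simple zeros of $dx$ collide into a single zero of order $r-1$ at the origin, so the $t=0$ spectral curve leaves the Eynard--Orantin regime used to prove \cref{prop:ClosedFormula-t}; one must make sure that the genuine $t\to 0$ limit of the correlators coincides with the naive $t=0$ specialization of the determinantal operators, with no negative powers of $t$ surviving. I expect the rationality-in-$t$ argument above to settle this, but it is the point that genuinely relies on the determinantal presentation of \cite{ABDKS24-logTR} rather than on the Eynard--Orantin recursion, and it is where I would concentrate the care; the top-degree extraction and constant bookkeeping, though the bulk of the computation, are routine.
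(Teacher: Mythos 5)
Your overall route is the same as the paper's: use \cref{prop:ClosedFormula-t} to transfer the $t\to 0$ limit between the two sides, evaluate $\lim_{t\to 0} W_n$ termwise from \eqref{eq:Gia-t} by extracting the top degree of the Chiodo class and the limit of the one-forms $d\theta_{k,a}$, fix the powers of $r$ via the dimension constraint, and observe that on the $\omega_n$-side the $t$-dependence sits entirely inside $\mathsf{O}_{x_i}$ and is rational with denominators that are powers of $rz_i^r - t$, so that the limit is the naive $t=0$ specialization. All of this matches the paper's proof, and your concern about the collision of the ramification points (i.e.\ that it is rationality in $t$ of the determinantal formula, not Eynard--Orantin theory, which justifies commuting the limit on that side) is exactly the right point to isolate.

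There is, however, one genuine gap: the primary field $a=0$. The sum in \eqref{eq:Gia-t} runs over $a_i \in \{0,\ldots,r-1\}$, while the target formula \eqref{eq:Theta:rs} runs over $a_i \in [r-1]$; your proof never explains why the $a_i=0$ terms disappear in the limit. Worse, your displayed formula $\lim_{t\to 0} d\theta_{k,a}(z;t) = r^{-k}(rk+a)!^{(r)}\,dz/z^{rk+a+1}$ is false at $a=0$: with the paper's definition of the multi-factorial one has $(rk)!^{(r)} = r^k\,k! \neq 0$, whereas the actual limit is
\begin{equation}
	\lim_{t\to 0}\, d\theta_{k,0}(z;t)
	=
	d\Bigl(-\tfrac{d}{dx}\Bigr)^{k+1} z^r
	=
	0
	\qquad (x = z^r),
\end{equation}
since $\tfrac{d}{dx}z^r = 1$ at $t=0$ and every further derivative kills the constant. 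So, read literally, your computation would leave spurious nonzero $a=0$ contributions and the two sides of \eqref{eq:Theta:rs} would not agree. The fix is already latent in your own parenthetical: the induction produces the telescoping product $\prod_{j=0}^{k}(a+jr)$, which contains the factor $a$ and hence vanishes identically when $a=0$; this product equals $(rk+a)!^{(r)}$ only for $a \geq 1$. The paper devotes an explicit step to this vanishing (it is precisely the mechanism by which the $\Theta^{r,s}$-classes form a CohFT of rank $r-1$ rather than $r$), and your write-up needs the same sentence to be complete.
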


\begin{proof}
	First, both \cref{eq:Gia-t,eq:CloseFormula-t} are rational functions in $z_1,\ldots,z_n \in \P^1$ and $t \in \C^*$, and for both formulas $t = 0$ is a removable singularity. Since the two formulas are equal by \cref{prop:ClosedFormula-t}, it implies that their limits for $t \to 0$ are equal as well, which is \eqref{eq:limit}.

	To obtain the rest of the theorem, we evaluate the limits explicitly. First, we evaluate the limit of $W_{n}(z_{[n]};\hbar;t)$ using \eqref{eq:Gia-t}. We have:
	\begin{equation}
	\begin{split}
		\frac{1}{(s-r)^{2g-2+n}}
		\lim_{t\to 0}
			\left( \frac{t}{r} \right)^{D^{r,s}_{g;a}} \,
			C_{g,n}^{r,s}(a;\tfrac{r}{t})
		&=
		\frac{1}{(s-r)^{2g-2+n}}\bigl[ C_{g,n}^{r,s}(a) \bigr]_{D_{g;a}^{r,s}} \\
		&=
		r^{-\frac{(2g-2+n)(r-s)+|a|}{r}} \Theta_{g,n}^{r,s}(a) .
	\end{split}
	\end{equation}
	Moreover, for the primary field $a=0$ we have $d \theta_{k,0}(z; t) \to d \left(- \frac{d}{dx}\right)^{k+1} z^r$ as $t \to 0$. But
	\begin{equation}
		\frac d{dx} z^r = \frac{r z^r}{rz^r-t} = 1 + \bigO(t), 
	\end{equation} 
	and hence $d \theta_{k,0}(z; t) \to 0$ for all $k \geq 0$. Therefore, we omit the primary field $a_i=0$ in the arguments of $\Theta^{r,s}_{g,n}(a)$. For $a \in [r-1]$ we have 
	\begin{equation}
	\begin{split}
		\lim_{t \to 0} \; d \theta_{k,a}(z;t) 
		&=
		\lim_{t\to 0} \; r\,d \Big(-\frac{z}{rz^r-t} \frac{d}{dz}\Big)^{k+1} \frac{z^{r-a}}{r-a} \\
		&=
		r\,d \Big(-\frac{1}{rz^{r-1}} \frac{d}{dz}\Big)^{k+1} \frac{z^{r-a}}{r-a} \\ 
		&=
		\frac{(rk+a)!^{(r)}}{r^k} \frac{dz}{z^{rk+a+1}}. 	
	\end{split}
	\end{equation}
	Defining $d \xi_{k,a}(z) = (rk+a)!^{(r)} \frac{dz}{z^{rk+a+1}}$ and putting all of this together, we get that
	\begin{multline}
		\lim_{t \to 0} \;
			W_{n}(z_{[n]};\hbar;t)
		- \delta_{n,1} z^{s-r}d(z^r)
		- \delta_{n,2} \frac{dz_1dz_2}{(z_1-z_2)^2}
		= \\
		\sum_{g \ge 0} (- \hbar)^{2g-2+n}
			\sum_{\substack{ k_1,\dots,k_n \ge 0 \\ a_1,\dots,a_n \in [r-1] }} \frac{1}{r^{\sum_{i=1}^n k_i + \frac{(2g-2+n)(r-s)+|a|}{r}}}
				\int_{\overline{\mathcal{M}}_{g,n}}
					\Theta_{g,n}^{r,s}(a)
				\prod_{i=1}^n \psi_i^{k_i} \, d \xi_{k_i,a_i}(z_i).
	\end{multline}
	We finally notice that, for degree reasons, the integrals are necessarily zero unless 
	\begin{equation}
		3g-3+n = D_{g,a}^{r,s} + \sum_{i=1}^n k_i = g-1 + \frac{(2g-2+n)(r-s)+|a|}{r} + \sum_{i=1}^n k_i,
	\end{equation}
	and so we can replace the exponent of $r$ by $2g-2+n$ to get
	\begin{multline}
		\lim_{t \to 0} \; W_{n}(z_{[n]};\hbar;t)
		- \delta_{n,1} z^{s-r}d(z^r)
		- \delta_{n,2} \frac{dz_1dz_2}{(z_1-z_2)^2}
		= \\
		\sum_{g \ge 0} \left( - \frac{\hbar}{r} \right)^{2g-2+n}
			\sum_{\substack{ k_1,\dots,k_n \ge 0 \\ a_1,\dots,a_n \in [r-1]}}
				\int_{\overline{\mathcal{M}}_{g,n}}
					\Theta_{g,n}^{r,s}(a)
				\prod_{i=1}^n \psi_i^{k_i} \, d \xi_{k_i,a_i}(z_i).
	\end{multline}
	As for the limit of $\omega_n(z_{[n]};\hbar;t)$, it is straightforward, as the dependence on $t$ in~\eqref{eq:CloseFormula-t} is hidden in the definition of $\mathsf{O}_{x}$, and their limit for $t \to 0$ is manifestly $\mathsf{O}_{x}$ used in~\eqref{eq:Theta:rs}.
\end{proof}

\subsubsection{\texorpdfstring{$\Theta^{r,s}$}{Theta}-classes and generalized topological recursion}
\Cref{thm:WgnTheta} gives a determinantal formula for descendant integrals of the $\Theta^{r,s}$-classes. It was obtained by taking the $t \to 0$ limit of \Cref{prop:ClosedFormula-t}. For $t \neq 0$, the differentials $W_{n}(z_{[n]};\hbar;t)$ from \eqref{eq:Gia-t} were obtained via the Eynard--Orantin topological recursion, while the differentials $\omega_n(z_{[n]}; \hbar; t)$ of \eqref{eq:CloseFormula-t} were obtained via the generalized topological recursion. It is natural to ask whether similar statements can be made for the limit $t \to 0$: are the differentials $\lim_{t \to 0} W_{n}(z_{[n]};\hbar;t)$ and $\lim_{t\to 0} \omega_n(z_{[n]}; \hbar; t)$ computed by topological recursion?

This question is easy to answer from the generalized topological recursion side.

\begin{definition}\label{def:rs}
	Let $r \geq 2$ and $s \in [r-1]$. We define the \emph{$(r,s)$ spectral curve} by the data
	\begin{equation}
		\Sigma = \mathbb{P}^1,
		\quad
		dx = d(z^r ),
		\quad
		dy = d(z^{s-r})
		\quad
		B = \frac{dz_1 dz_2}{(z_1-z_2)^2},
		\quad
		\mathcal{P} = \set{ z=0 }.
	\end{equation}
\end{definition}

This is the $t \to 0$ limit of the spectral curve considered previously.

\begin{corollary}[Generalized topological recursion]\label{thm:gTR}
	The $n$-point correlators $\omega_n(z_{[n]};\hbar)$ produced by generalized topological recursion on the $(r,s)$ spectral curve calculate the descendant integrals of the $\Theta^{r,s}$-classes:
	\begin{multline}\label{eq:Theta:rs:omega}
		\omega_n(z_{[n]};\hbar)
		=
		\delta_{n,1} z_1^{s-r}dz_1^r + \delta_{n,2} \frac{dz_1dz_1}{(z_1-z_2)^2} +
		\\
		+
		\sum_{g \ge 0} \left( - \frac{\hbar}{r} \right)^{2g-2+n}
		\sum_{\substack{ k_1,\dots,k_n \ge 0 \\ a_1,\dots,a_n \in [r-1]}}
			\int_{\overline{\mathcal{M}}_{g,n}}
				\Theta_{g,n}^{r,s}(a)
				\prod_{i=1}^n \psi_i^{k_i} \, d \xi_{k_i,a_i}(z_i). 
	\end{multline}
\end{corollary}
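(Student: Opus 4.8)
The plan is to deduce the statement almost directly from \cref{thm:WgnTheta}, the only additional ingredient being the identification of the generalized topological recursion correlators on the $(r,s)$ spectral curve with the $t \to 0$ limit of the correlators $\omega_n(z_{[n]};\hbar;t)$ of \eqref{eq:CloseFormula-t}. First I would invoke \cite[Theorem~3.6]{ABDKS25-gTR} for the $(r,s)$ spectral curve of \cref{def:rs}: since the determinantal formula \eqref{eq:CloseFormula-t} is precisely the output of the generalized topological recursion for spectral curves of this type, the correlators $\omega_n(z_{[n]};\hbar)$ are given by the right-hand side of \eqref{eq:CloseFormula-t} with the substitutions $x_i = z_i^r$ and $y_i = z_i^{s-r}$ and the choice $\mathcal{P} = \set{z=0}$. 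The crucial point here is that the determinantal formula applies uniformly whether $dx$ has a single zero of order $r-1$ at $z = 0$, as for the $(r,s)$ curve, or $r$ simple zeros, as for the curve of \cref{prop:ClosedFormula-t} with $t \neq 0$.

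Next I would observe that this expression is exactly $\lim_{t \to 0} \omega_n(z_{[n]};\hbar;t)$. Indeed, in \eqref{eq:CloseFormula-t} the dependence on $t$ enters solely through the operators $\mathsf{O}_{x_i}$ of \eqref{eq:O}, via $x_i = z_i^r - t \log z_i$ and $dx_i$, whereas the functions $w^\pm_i = e^{\pm \frac{1}{2} u_i \hbar \partial_{y_i}} z_i$ and the kernel $\frac{\sqrt{dw^+_i dw^-_{\sigma(i)}}}{w^+_i - w^-_{\sigma(i)}}$ are manifestly independent of $t$. As already noted at the end of the proof of \cref{thm:WgnTheta}, the operators $\mathsf{O}_{x_i}$ depend continuously on $t$ and their $t \to 0$ limit is obtained simply by setting $x_i = z_i^r$. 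Hence $\omega_n(z_{[n]};\hbar) = \lim_{t \to 0} \omega_n(z_{[n]};\hbar;t)$.

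It then remains to read off the geometric content from \cref{thm:WgnTheta}. By \eqref{eq:Theta:rs}, the right-hand side of \eqref{eq:CloseFormula-t} evaluated at $x_i = z_i^r$ equals the generating series of $\Theta^{r,s}$-integrals together with the two unstable contributions $\delta_{n,1} z_1^{s-r} d(z_1^r)$ and $\delta_{n,2} \frac{dz_1 dz_2}{(z_1-z_2)^2}$, which are the $t \to 0$ limits of $W_{0,1} = y\,dx$ and $W_{0,2} = B$ respectively. Combining this with the identity of the previous paragraph and rearranging yields precisely \eqref{eq:Theta:rs:omega}.

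The only genuinely delicate step is the first one: one must be sure that applying the generalized topological recursion directly to the confluent curve of \cref{def:rs} produces the same correlators as taking the $t \to 0$ limit of the correlators on the nearby family of curves with simple ramification. This commutation of the recursion with the confluence of ramification points is what makes the corollary nontrivial; it is guaranteed here because the determinantal formula is itself the definition of the recursion for these curves and depends rationally on $t$ with a removable singularity at $t = 0$, so that no separate confluence analysis of the recursion is required.
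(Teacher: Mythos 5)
Your reduction to \cref{thm:WgnTheta} and the computation of the $t \to 0$ limit of the determinantal formula (your second and third paragraphs) match the paper. The gap is in your first step. You apply \cite[Theorem~3.6]{ABDKS25-gTR} directly to the confluent curve of \cref{def:rs}, justifying this by saying that ``the determinantal formula is itself the definition of the recursion for these curves''. It is not: as set up in the paper, generalized topological recursion is a procedure whose input includes the set $\mathcal{P}$ of special points, and the determinantal formula \eqref{eq:CloseFormula-t} is a derived result, which the paper invokes only for $t \neq 0$, where every special point is a simple zero of $dx$ at which $dy \neq 0$ (this is also the regime in which \cite[Theorem~2.15]{ABDKS25-gTR} identifies generalized topological recursion with Eynard--Orantin recursion, which is what drives \cref{prop:ClosedFormula-t}). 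Whether the closed formula also computes the recursion's output when $dx$ has a single zero of order $r-1$ at $z=0$ is precisely the content of \cref{thm:gTR}; asserting that it ``applies uniformly'' is assuming the conclusion.

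Your observation that the formula depends rationally on $t$ with a removable singularity at $t=0$ does not close this gap either: it shows that the \emph{formula} has a limit, not that the \emph{recursion} applied to the degenerate curve produces that limit. The paper bridges exactly this point with \cite[Theorem~5.3]{ABDKS25-gTR}: if the input data varies analytically in $t$ and the points of $\mathcal{P}_t$ stay in a domain $U$ on whose boundary $dx$ and $dy$ are regular and non-vanishing, with no further special points in $U$, then the correlators of generalized topological recursion depend analytically on $t$. Hence the correlators on the limiting $(r,s)$ curve equal $\lim_{t\to 0}\omega_n(z_{[n]};\hbar;t)$, and \cref{thm:WgnTheta} finishes the proof. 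Either this deformation statement, or an explicit verification that the hypotheses of \cite[Theorem~3.6]{ABDKS25-gTR} genuinely cover degenerate special points, is the ingredient your argument is missing.
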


\begin{proof}
	Looking back at \cref{prop:ClosedFormula-t}, what the present proposition is essentially saying is that the limit $\lim_{t \to 0} \omega_n(z_{[n]};\hbar;t)$ of the differentials constructed by the generalized topological recursion on the spectral curve with $dx = d (z^r-t \log(z))$ and $dy = d(z^{s-r})$ are equal to the differentials constructed by the generalized topological recursion on the limiting $t\to 0$ spectral curve with $dx = d(z^r)$ and $dy = d(z^{s-r})$. But this follows from the following key property of generalized topological recursion. 

	If the input data of generalized topological recursion varies analytically with respect to a parameter $t$ such that all points in $\mathcal{P}_t$ belong to a domain $U$ such that $dx$ and $dy$ are regular and non-vanishing along the boundary of $U$ and there are no further special points in $U$, then the differentials of generalized topological recursion depend on $t$ analytically, see~\cite[Theorem~5.3]{ABDKS25-gTR}. This implies that the $t \to 0$ limit of the differentials produced by generalized topological recursion coincide with the differentials on the limiting spectral curve, which proves the proposition.
\end{proof}

What about the limit $\lim_{t \to 0} W_{n}(z_{[n]}; \hbar; t)$? Are these correlators computed by topological recursion on the limiting spectral curve? It turns out that this question is quite subtle. First, in the $t \to 0$ limit, the spectral curve becomes $x = z^r$, $y = z^{s - r}$. It has a single ramification point at $z = 0$, which, for $r > 2$, is not simple. Hence, the original Eynard--Orantin topological recursion does not apply. One can instead consider the Bouchard--Eynard topological recursion~\cite{BE13}. Do the correlators $\lim_{t \to 0} W_{g,n}(z_{[n]}; t)$ then coincide with those produced by the Bouchard--Eynard recursion on the limiting spectral curve? This type of question was studied in detail in~\cite{BBCKS}. In this particular case, the answer is generally no: the two coincide only when $s = r - 1$, which is precisely the case studied in~\cite{CGG}. In other words, while generalized topological recursion naturally ``commutes with limits'' (with a careful choice of $\mathcal P$), the Eynard--Orantin recursion---and its Bouchard--Eynard generalization---does not always do so.

Therefore, although generalized topological recursion on the $(r, s)$ spectral curve computes the descendant integrals of the $\Theta^{r,s}$-classes, the Bouchard--Eynard topological recursion generally computes something else, except in the case $s = r - 1$, where the two coincide. A geometric interpretation of the Bouchard--Eynard topological recursion for other values of $s$ remains unknown.

\section{Integrability}
\label{sec:integrability}

In the previous section, we showed that generalized topological recursion on the $(r,s)$ spectral curve produces differentials $\omega_n(z_{[n]}; \hbar)$ that compute the descendant integrals of the $\Theta^{r,s}$-classes. This leads to a clear integrability statement, which we explore in this section.

\subsection{\texorpdfstring{$r$}{r}-KdV integrability}
A universal property of generalized topological recursion on genus zero spectral curves is that it always produces systems of differentials that are KP integrable, see~\cite[Theorem~1.1]{ABDKS-KPg0} and~\cite[Theorem~6.4]{ABDKS25-gTR}. 

In our case of interest, the resulting statement is that the partition function associated to the expansion of the differentials $\omega_n(z_{[n]};\hbar)$ on the $(r,s)$ spectral curve at any regular point is a KP tau function. In particular, the expansion of the differentials $\omega_n(z_{[n]};\hbar)$ at $z = \infty$ in the local coordinate \smash{$z^{-1}$} gives, up to a re-scaling of the times, the following tau function:
\begin{equation}\label{eq:tau}
	Z^{r,s}(\bm{x};\hbar)
	\coloneqq
	\exp \left(
	\sum_{\substack{g \geq 0, \, n \geq 1 \\ 2g-2+n>0}} \frac{\hbar^{2g-2+n}}{n!}
	\sum_{\substack{k_1,\ldots,k_n \geq 0 \\ a_1,\ldots,a_n \in [r-1]}}
	\int_{\Mbar_{g,n}}
	\Theta^{r,s}_{g,n}(a)
	\prod_{i=1}^n \psi_i^{k_i}  (rk_i+a_i)!^{(r)} x_{rk_i + a_i}
	\right). 
\end{equation}

\begin{remark}
	While the appearance of the multi-factorials may look strange at first, they appear naturally if we allow the sum over $a_i $ to be over all $a_i > 0$ and omit $\psi$-classes. Indeed we can rewrite $ Z^{r,s}$ as 
	\begin{equation}
		Z^{r,s}(\bm{x};\hbar)
		=
		\exp \left(
		\sum_{\substack{g \geq 0, \, n \geq 1 \\ 2g-2+n>0}} \frac{\hbar^{2g-2+n}}{n!}
		\sum_{\substack{k_1,\ldots,k_n \geq 0 \\ a_1,\ldots,a_n \geq 1 }}
		\int_{\Mbar_{g,n}}
		\Theta^{r,s}_{g,n}(a)
		\prod_{i=1}^n   \langle a_i\rangle  x_{rk_i + a_i}
		\right),
	\end{equation}
	where $\langle a\rangle$ denotes the remainder of the Euclidean division of $a$ by $r$. This formula follows from the properties of the Chiodo class under the shift of $a $ by $r$ as proved in \cite[Theorem~4.1(ii)]{GLN23}.
\end{remark}

We thus obtain the following integrability statement.

\begin{theorem}[Integrability] \label{thm:int}
	The descendant potential 	$Z^{r,s}(\bm{x};\hbar)$ of the $\Theta^{r,s}$-classes is a tau function of the $r$-KdV (also known as $r$-Gelfand--Dickey) hierarchy.
\end{theorem}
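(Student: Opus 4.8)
The plan is to promote the KP-integrability of $Z^{r,s}$ to $r$-KdV integrability by exhibiting the $r$-reduction explicitly. Recall that the $r$-KdV (Gelfand--Dickey) hierarchy is the $r$-reduction of the KP hierarchy, and that a KP tau function $\tau(\bm{x})$ is a tau function of the $r$-reduced hierarchy if and only if it is independent of the times $x_{rk}$ for $k \ge 1$ (up to an inessential factor given by the exponential of a linear function of those times). The starting point, which I would take for granted, is that the expansion of the correlators $\omega_n$ at $z = \infty$ yields a KP tau function: this is the content of the discussion preceding the statement, following \cite[Theorem~1.1]{ABDKS-KPg0} and \cite[Theorem~6.4]{ABDKS25-gTR} applied to the genus-zero $(r,s)$ spectral curve of \cref{def:rs}.

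The first concrete step is to read off from \eqref{eq:tau} exactly which time variables occur in the potential. Every time appearing in the exponent is of the form $x_{rk_i + a_i}$ with $k_i \ge 0$ and $a_i \in [r-1]$; since $1 \le a_i \le r-1$, the index $rk_i + a_i$ is never a multiple of $r$. Hence $Z^{r,s}$ depends only on the times $x_m$ with $m \not\equiv 0 \pmod{r}$, and is manifestly constant in each $x_{rk}$, $k \ge 1$. This is precisely the structural consequence of having excluded the insertions $a_i = 0$ when passing to the cohomological field theory in \cref{prop:CohFT}, and on the spectral-curve side it reflects the $r$-fold ramification $x = z^r$ of \cref{def:rs}. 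The second step is then to invoke the reduction criterion above: a KP tau function that is independent of the times indexed by multiples of $r$ is automatically a tau function of the $r$-KdV hierarchy. Combining this with the preceding observation yields the theorem.

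The point requiring genuine care --- and the one I would single out as the main obstacle --- is matching conventions between the abstract KP times of \cite{ABDKS-KPg0,ABDKS25-gTR} and the variables $x_m$ used in \eqref{eq:tau}. The passage from the correlators to \eqref{eq:tau} involves a rescaling of the times (as flagged in the text preceding the statement), and one must verify that this rescaling acts diagonally in the index $m$, so that it neither mixes the $r$-divisible times with the others nor reintroduces any dependence on the $x_{rk}$. Once this bookkeeping is settled, the absence of the times $x_{rk}$ is a genuine feature rather than a gauge artifact, and the classical characterization of the $r$-reduction of KP --- for instance via the boson--fermion correspondence, where $r$-reduction is the vanishing of the action of the Heisenberg modes indexed by multiples of $r$ --- completes the argument and identifies the reduced hierarchy as $r$-KdV.
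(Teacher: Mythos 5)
Your proposal is correct and follows essentially the same route as the paper: the paper's proof likewise combines the KP integrability of $Z^{r,s}$ (inherited from the generalized topological recursion framework of \cite{ABDKS-KPg0,ABDKS25-gTR}) with its independence from the times $x_{rm}$, $m \geq 1$, which holds because the one-forms $d\xi_{k,a}(z)$ in \eqref{eq:Theta:rs:omega} contain no monomials $z^{rm}$ as the insertion $a = 0$ is excluded, and then invokes the standard $r$-reduction criterion. Your extra remark about the time rescaling is a reasonable precaution, and indeed unproblematic since the rescaling $x_{rk+a} \mapsto (rk+a)!^{(r)} x_{rk+a}$ acts diagonally in the index and so cannot reintroduce dependence on the $r$-divisible times.
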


\begin{proof}
	This is an immediate observation combining the KP integrability of the partition function in~\eqref{eq:tau} with its independence from the times $x_{rm}$, for $m \geq 1$. The latter follows from the absence of monomials $z^{rm}$ in $d\xi_{k,a}(z)$ in~\eqref{eq:Theta:rs:omega}, since  $a$ only takes values in $[r - 1]$, i.e., $a = 0$ is excluded.
\end{proof}

\subsection{The initial conditions}
\label{ssec:ic}
\Cref{thm:int} says that $Z^{r,s}(\bm{x};\hbar)$ is a $r$-KdV tau function, but it does not say \emph{which} tau function. To specify the tau function we need to compute initial conditions for the normal coordinates of the $r$-KdV hierarchy. In other words, we want to compute 
\begin{equation}\label{eq:ualpha:def}
	u_\alpha(x_1;\hbar)
	\coloneqq
	\partial_{1}\partial_{\alpha} \log Z^{r,s}(\bm{x};\hbar) \big|_{
		x_{i} = 0 \, \forall \, i \geq 2
	}
\end{equation}
for all $\alpha \in [r-1]$. Here $\partial_{m}$ is the shorthand notation for \smash{$\frac{\partial}{\partial x_m}$}. The functions $\partial_{1}\partial_{\alpha} \log Z^{r,s}$ are called the normal coordinates of the $r$-KdV hierarchy, so $u_\alpha$ are the corresponding initial conditions. From the definition of $Z^{r,s}$, it follows that
\begin{equation}\label{eq:ualpha}
	u_\alpha(x_1;\hbar)
	=
	\sum_{\substack{g \geq 0, \, n \geq 2 \\ 2g-2+n>0}}
	\frac{\hbar^{2g-2+n}}{(n-2)!}
	\int_{\Mbar_{g,n}}
	\Theta^{r,s}_{g,n}(\alpha, 1,\cdots,1) \, \alpha \, x_1^{n-2} .
\end{equation}
We can express the initial conditions for the normal coordinates in terms of finitely many primary integrals (i.e. no $\psi$-classes) of the $\Theta^{r,s}$-class. The shape of these initial conditions looks different for $s = 1$ and $s > 1$.

\begin{proposition}\label{prop:ic}
	The following initial conditions hold.
	\begin{itemize}
		\item For any $r \ge 2$, $s = 1$, and $\alpha \in [r-1]$:
		\begin{equation}
			u_\alpha(x_1;\hbar)
			=
			\delta_{\alpha,2g-1}
			\left( \alpha^2 \int_{\Mbar_{g,1}} \Theta^{r,1}_{g,1}(\alpha) \right)
			\left( \frac{\hbar}{1 - \hbar x_1} \right)^{\alpha+1} .
		\end{equation}
		
		\item For any $r \ge 3$, $s =2,\ldots,r-1$, and $\alpha \in [r-1]$:
		\begin{equation}
			u_\alpha(x_1;\hbar)
			=
			\sum_{\substack{g \geq 0, \, n \geq 2 \\ 2g-2+n>0 \\ \alpha = s(2g-2+n)-n+1}}
			\frac{\hbar^{2g-2+n}}{(n-2)!}
			\int_{\Mbar_{g,n}}
			\Theta^{r,s}_{g,n}(\alpha, 1,\cdots,1) \, \alpha \, x_1^{n-2} ,
		\end{equation}
		which is a polynomial in $x_1$ and $\hbar$.
	\end{itemize}
\end{proposition}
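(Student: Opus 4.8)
The plan is to reduce the entire statement to a single cohomological degree (selection) rule, and then---in the $s=1$ case only---to resum the surviving series by means of the modified unit axiom \eqref{eq:modunit}. The first step is purely dimensional. The integrals in \eqref{eq:ualpha} carry no $\psi$-classes, so each is nonzero only if the degree of $\Theta^{r,s}_{g,n}(\alpha,1,\ldots,1)$ equals $\dim \Mbar_{g,n} = 3g-3+n$. By \cref{def:theta} the class $\Theta^{r,s}_{g,n}(a)$ is concentrated in degree $D^{r,s}_{g;a}$, and for the insertion $(\alpha,1,\ldots,1)$ one has $|a| = \alpha + (n-1)$. Substituting \eqref{eq:deg} and clearing denominators, I would find that $3g-3+n = D^{r,s}_{g;a}$ is equivalent to $s(2g-2+n) = \alpha + n - 1$, i.e. $\alpha = s(2g-2+n)-n+1$. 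This is exactly the constraint in both bullets, so the two cases of the proposition amount to solving this one equation in the two regimes.

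For $s\geq2$ I would argue finiteness directly. Setting $m = 2g-2+n\geq1$, the selection rule gives $n = sm-\alpha+1$ and hence $g = \tfrac12\bigl(m(1-s)+\alpha+1\bigr)$; the requirement $g\geq0$ forces $m(s-1)\leq\alpha+1\leq r$, so only finitely many pairs $(g,n)$ contribute. Each surviving term is the monomial $x_1^{n-2}$ weighted by $\hbar^{m}$, so the sum is a polynomial in $x_1$ and $\hbar$. No further collapse occurs precisely because for $s>1$ the insertion of the field $1$ is \emph{not} the unit insertion, so \eqref{eq:modunit} does not apply; this is the structural reason the two cases look so different.

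For $s=1$ the rule degenerates to $\alpha = 2g-1$, pinning down $g=(\alpha+1)/2$ (and forcing $\alpha$ odd, accounting for $\delta_{\alpha,2g-1}$) while leaving $n$ free. The key input now is \eqref{eq:modunit}: for $s=1$, inserting the field $1$ in the last marked point equals multiplication by $\psi_n$ followed by pullback along the forgetful map $p$. Peeling off the $n-1$ trailing ones one at a time, and using the projection formula together with $p_*\psi_n = \kappa_0 = 2g-2+(n-1)$ at each stage, I would obtain
\[
\int_{\Mbar_{g,n}}\Theta^{r,1}_{g,n}(\alpha,1,\ldots,1)
= \prod_{j=1}^{n-1}(2g-2+j)\int_{\Mbar_{g,1}}\Theta^{r,1}_{g,1}(\alpha)
= \frac{(\alpha+n-2)!}{(\alpha-1)!}\int_{\Mbar_{g,1}}\Theta^{r,1}_{g,1}(\alpha),
\]
where in the last step I used $2g-2=\alpha-1$. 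Feeding this into \eqref{eq:ualpha}, substituting $m=n-2$, and extracting $\hbar^{\alpha+1}$ reduces the $n$-sum to $\sum_{m\geq0}\binom{\alpha+m}{\alpha}(\hbar x_1)^m = (1-\hbar x_1)^{-(\alpha+1)}$; the leftover constants $\alpha\cdot\alpha!/(\alpha-1)! = \alpha^2$ then assemble into the claimed closed form.

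The only genuinely delicate point is the bookkeeping in the iterated unit axiom---keeping track of the decreasing number of marked points so that the correct dilaton factors $2g-2+j$ appear, and confirming that each intermediate class is still of top degree so that the pushforwards are legitimate. I expect no conceptual obstacle beyond correctly identifying, via \eqref{eq:deg}, that the degree selection rule is the whole arithmetic content of the statement; once that is in hand, both bullets follow by elementary manipulation.
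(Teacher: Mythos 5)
Your proposal is correct and follows essentially the same route as the paper's proof: the dimension constraint $\alpha = s(2g-2+n)-n+1$ as the selection rule, a finiteness count for $s \geq 2$, and for $s=1$ the iterated use of the modified unit axiom \eqref{eq:modunit} with the projection formula ($p_*\psi = 2g-2+n$) to collapse the sum into the binomial series $\sum_{m\geq 0}\binom{\alpha+m}{\alpha}(\hbar x_1)^m = (1-\hbar x_1)^{-(\alpha+1)}$. Your explicit bound $m(s-1) \leq \alpha+1 \leq r$ makes the finiteness claim slightly more concrete than the paper's, and your worry about intermediate classes being top-degree is unnecessary since the projection formula needs no such hypothesis, but these are only cosmetic differences.
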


\begin{proof}
	We want to understand when the integral in \eqref{eq:ualpha} is non-zero. For degree reasons, the integral is automatically zero unless
	\begin{equation}\label{eq:degree}
		3g - 3+ n = g-1+\frac{(2g-2+n)(r-s)+\alpha + n-1}{r},
	\end{equation}
	which simplifies to $\alpha = s(2g-2+n)-n+1$. For $s > 1$, there are only finitely many $(g,n)$ that satisfy the above equation. Thus, we see  from \cref{eq:ualpha} that for $s > 1$, $u_\alpha$
	is a polynomial in $x_1$ and $\hbar$. This proves the second point of the thesis.

	For $s = 1$, the condition \eqref{eq:degree} simplifies further to $\alpha = 2g-1$, which is independent of $n$. As a result, there are infinitely many values of $n$ to take into account. Recall from \cref{prop:CohFT} that the $\Theta^{r,s}$-classes satisfy the modified unit axiom, that is
	\begin{equation}
		\Theta^{r,1}_{g,n+1}(a_1,\cdots, a_n,1)
		= 
		\psi_{n+1} \cdot p^* \Theta^{r,1}_{g,n}(a_1,\cdots, a_n) ,
	\end{equation}
	where $p \colon \Mbar_{g,n+1} \to \Mbar_{g,n} $ is the forgetful map that forgets the last marked point. Integrating over $\Mbar_{g,n+1}$, we get
	\begin{equation}
		\int_ {\Mbar_{g,n+1}} \Theta^{r,1}_{g,n+1}(a_1,\cdots, a_n,1)
		=
		(2g-2+n) \int_ {\Mbar_{g,n}} \Theta^{r,1}_{g,n}(a_1,\cdots, a_n)
	\end{equation}
	by applying the projection formula. As a consequence, we can compute $u_\alpha$ for the case $s=1$ as 
	\begin{equation}
		\begin{split}
			u_\alpha(x_1;\hbar)
			&=
			\delta_{\alpha,2g-1}
			\sum_{n \geq 1} \frac{\hbar^{2g-1+n}}{(n-1)!}
			\int_{\Mbar_{g,n+1}} \Theta^{r,1}_{g,n+1}(\alpha, 1,\cdots,1) \alpha x_1^{n-1} \\
			&=
			\delta_{\alpha,2g-1}
			\left(\alpha  \int_{\Mbar_{g,1}} \Theta^{r,1}_{g,1}(\alpha) \right)
			\sum_{n \geq 1} \hbar^{\alpha+1} \frac{(2g-2+n)\cdots (2g-2+1)}{(n-1)!} (\hbar x_1)^{n-1} \\
			&=
			\delta_{\alpha,2g-1}
			\left( \alpha \int_{\Mbar_{g,1}} \Theta^{r,1}_{g,1}(\alpha) \right)
			\sum_{n \geq 1} \hbar^{\alpha+1} \frac{(\alpha+n-1)!}{(\alpha-1)!(n-1)!} (\hbar x_1)^{n-1} \\
			&=
			\delta_{\alpha,2g-1}
			\left( \alpha^2 \int_{\Mbar_{g,1}} \Theta^{r,1}_{g,1}(\alpha) \right)
			\left( \frac{\hbar}{1 - \hbar x_1} \right)^{\alpha+1} .
		\end{split}
	\end{equation}
	This proves the first point of the thesis.
\end{proof}

\begin{remark}\label{rem:YZ}
	It is useful to compare the initial condition for $s = 1$ with that of the generalized Brézin--Gross--Witten model studied in~\cite{YZ23} from the point of view of integrable systems. In \textit{loc. cit.}, the authors consider the solution to the $r$-KdV hierarchy $u_{\alpha}$ subject to the initial condition\footnote{
		In~\cite{YZ23}, there is no explicit $\hbar$-dependence. This can be restored by a homogeneity analysis.
	}
	\begin{equation}
		u_{\alpha}(x_1;\hbar) = d_{\alpha} \left( \frac{\hbar}{1 - \hbar x_1} \right)^{\alpha+1},
		\qquad
		\alpha \in [r-1],
	\end{equation}
	for arbitrary constants $d_{\alpha} \in \mathbb{C}$. Clearly, the $r$-KdV solution $\partial_{1}\partial_{\alpha} \log Z^{r,1}$, which computes the descendant integrals of the $\Theta^{r,1}$-classes, is a special case of the solution studied by Yang--Zhou, corresponding to the choice of constants \smash{$d_{\alpha} = \delta_{\alpha,2g-1} \, \alpha^2 \int_{\Mbar_{g,1}} \Theta^{r,1}_{g,1}(\alpha)$}. We will return to a discussion of the tau function in \cref{sec:W:constraints}.
\end{remark}

\section{Loop equations}
\label{sec:loop}
In this section, we study further the generalized topological recursion correlators $\omega_{n}(z_{[n]};\hbar)$, which encode descendant integrals of the $\Theta^{r,s}$-classes. We derive loop equations for the $\omega_{g,n}$ that resemble the loop equations satisfied by the Bouchard--Eynard topological recursion correlators on the $(r,s)$ spectral curve, but are different except in the special case of $s = r-1$. However, when $s=1$, the loop equations that we obtain are particular cases of the shifted loop equations studied in~\cite{BBKN}, and consequently, the correlators $\omega_{g,n}$ for $s=1$ can be computed by the shifted topological recursion defined in~\cite{BBKN}.

\subsection{A second determinantal formula}
As we have seen, the differentials $\omega_n(z_{[n]};\hbar)$ produced by generalized topological recursion on the $(r,s)$-curve satisfy the determinantal formula~\eqref{eq:Theta:rs:omega}. However, a consequence of~\cite[Section~3]{ABDKS25-KP} is an alternative expression for the $n$-point function as a determinantal formula in terms of the so-called wave functions.

\subsubsection{Wave functions and Baker--Akhiezer kernel}
Define the formal functions
\begin{equation}
	\phi_k(z;\hbar)
	\coloneqq
	\sqrt{x'(z)} \int \zeta^{k-1} \sqrt{y'(\zeta)} e^{-\frac{1}{\hbar} \bigl( x(z)(y(z)-y(\zeta)) + \int_{z}^{\zeta} x dy \bigr)} \frac{d\zeta}{\sqrt{2\pi\hbar}},
	\qquad
	k \in \Z,
\end{equation}
where $x=z^{r}$ and $y=z^{s-r}$, as in the $(r,s)$ spectral curve, and the integral is understood as an asymptotic expansion for small absolute values of $\hbar$ near the critical point $\zeta = z$. Concretely,
\begin{equation}
\begin{split}
	\phi_k(z;\hbar)
	&=
	\sqrt{(r-s)r z^{r-1}} e^{-\frac{1}{\hbar} \frac{r}{s} z^s}
	\int\
		\zeta^{k-1+\frac{s-r-1}{2}}
		e^{\frac{1}{\hbar} \bigl(z^r\zeta^{s-r} + \frac{r-s}{s}\zeta^s \bigr)}
		\frac{d\zeta}{\sqrt{2\pi\hbar}} \\
	&=
	z^{k-1}\bigl( 1 + \mathrm{O}(z) \bigr).
\end{split}
\end{equation}
We will also need the dual\footnote{
	Compared to \cite{ABDKS25-KP}, we exchanged the role of the dual and relabeled the formal functions. More precisely: $\phi_k^{\textup{us}} = \phi_{1-k}^{\textup{them},*}$.
} formal functions defined as $\phi_k^*(z;\hbar) \coloneqq \phi_k(z;-\hbar)$.

\begin{definition}\label{d:wf}
	We define a normalized version of the formal functions, known as \emph{wave functions}, by setting
	\begin{equation}\label{eq:rs:Bessel:int}
		\psi_k(z;\hbar)
		\coloneqq
		\frac{e^{\frac{1}{\hbar} \frac{r}{s} z^s}}{\sqrt{rz^{r-1}}}
		\phi_k
		=
		\sqrt{(r-s)}
		\int\
			\zeta^{k-1+\frac{s-r-1}{2}}
			e^{\frac{1}{\hbar} \bigl(z^r\zeta^{s-r} + \frac{r-s}{s}\zeta^s \bigr)}
			\frac{d\zeta}{\sqrt{2\pi\hbar}},
	\end{equation}
	and analogously $\psi_k^*(z;\hbar) \coloneqq \psi_k(z;-\hbar)$. 
\end{definition}

Notice that the only difference between the functions $\phi_k$ and $\psi_k$ is in the so-called unstable terms, namely the value of the integrand at the critical point and the square-root of the second derivative of the exponential factor. The main reason for introducing such normalized versions is that the functions $\psi_k$ satisfy a very simple set of equations:
\begin{align}
	\label{eq:yhat}
	\hbar \frac{d}{dx} \psi_k &= \psi_{k+s-r}, \\
	\label{eq:xhat}
	x \psi_k &= \psi_{k+r} + \hbar \frac{2k+r-s-1}{2(r-s)} \psi_{k+r-s},
\end{align}
where again $x = z^r$. These equations can be recast as the following matrix differential system for the column vector of wave functions $\psi = (\psi_1,\ldots,\psi_r)^t$,
\begin{equation}\label{eq:rs:Bessel}
	\hbar \frac{d}{dx} \psi
	=
	\mc{D} \psi,
\end{equation}
where $\mc{D} = \mc{D}(x(z);\hbar)$ is the $r \times r$ matrix:
\begin{equation}\label{eq:calD}
	\mc{D}
	\coloneqq
	\begin{pNiceArray}{ccc|ccc}[margin]
		\frac{\hbar}{x} \frac{2+s-r-1}{2(r-s)} & & & \hphantom{0_{s}}\Block{3-3}{0_{s\times(r-s)}} & & \\
		& \ddots & & & & \\
		& & \frac{\hbar}{x} \frac{2(r-s)+s-r-1}{2(r-s)} & & & \\[.5ex]
		\hline
		\Block{3-3}{0_{(r-s)\times s}}
		& & & \Block{3-3}{0_{s \times s}} & & \\
		\\
		\\
	\end{pNiceArray}
	+
	\begin{pNiceArray}{ccc|ccc}[margin]
		\Block{3-3}{0_{s\times(r-s)}} & & & \frac{1}{x} & & \\
		& & & & \ddots & \\
		& & & & & \frac{1}{x} \\[.5ex]
		\hline
		1 & & & \Block{3-3}{0_{(r-s)\times s}} & & \\
		& \ddots & & \\
		& & 1 & \\
	\end{pNiceArray}.
\end{equation}
Similar equations hold for $\psi_k^*$, after replacing $\hbar$ by $ -\hbar$.

\begin{example}
	As an example, the vector of wave functions for $(r,s) = (2,1)$ satisfies
	\begin{equation}
		\hbar \frac{d}{dx} \psi = \begin{pmatrix} 0 & 1/x \\ 1 & 0 \end{pmatrix}\psi.
	\end{equation}
	In other words,
	\begin{equation}
		\left(\hbar \frac{d}{dx} x \hbar \frac{d}{dx} - 1\right)\psi_1 = 0
		\qquad\quad\text{and}\qquad\quad
		\psi_2 = x \hbar \frac{d}{dx} \psi_1 \,.
	\end{equation}
	The first equation implies that $\psi_1$ is the (properly normalized, asymptotic expansion of the) solution to the Bessel differential equation, while the second equation expresses $\psi_2$ in terms of its derivative. Thus, the differential system \labelcref{eq:rs:Bessel} can be considered as a higher version of the Bessel differential equation, depending on the integer parameters $r \ge 2$ and $s \in [r-1]$.
\end{example}

We now introduce the Baker--Akhiezer kernel:

\begin{definition}\label{def:BA}
	For $z_1 \neq z_2$, we define the \emph{Baker--Akhiezer kernel} as
	\begin{equation}\label{eq:K:def}
		K(z_1,z_2;\hbar)
		\coloneqq
		\sum_{k \ge 1} \psi_{1-k}^*(z_1;\hbar) \psi_{k}(z_2;\hbar) \sqrt{dx_1 dx_2} ,
	\end{equation}
	where $x_1 = z_1^r$ and $x_2 = z_2^r$. 
\end{definition}

The kernel $K(z_1,z_2;\hbar)$ extends as a formal series in $\hbar$ to a differential on $\mathbb{P}^1_{z_1} \times \mathbb{P}^1_{z_2} $ with a singularity along the diagonal. When $z_1 = z_2$, we will work with a regularized version of the kernel obtained by removing the singular part and adding the contribution from $ydx$. By a slight abuse of notation, we will continue to use the symbol $K(z,z;\hbar)$ for this regularized version. More precisely, we define
\begin{equation}
	K(z,z;\hbar)
	\coloneqq
	\lim_{z_0\to z} \left(
		e^{\frac{1}{\hbar}\frac{r}{s} (z^s-z_0^s)} K(z,z_0;\hbar)
		-
		\frac{\sqrt{dz dz_0}}{z - z_0}
	\right) + \frac{1}{\hbar} y(z) dx(z).
\end{equation}

\subsubsection{Another determinantal formula}
A restatement of \cite{ABDKS25-KP} for our specific choice of $x$ and $y$ gives a second determinantal formula for the correlators $\omega_{n}$.

\begin{proposition}\label{prop:det:K}
	The $n$-point correlators $\omega_n(z_{[n]};\hbar)$ produced by generalized topological recursion on the $(r,s)$ spectral curve are given by
	\begin{equation} \label{eq:det:K}
		\omega_{n}(z_{[n]};\hbar)
		=
		\sum_{\sigma \in C_n} \sgn(\sigma) \prod_{i=1}^n K(z_i,z_{\sigma(i)};\hbar) .
	\end{equation}
\end{proposition}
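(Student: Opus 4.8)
The plan is to deduce the statement directly from the general determinantal formula of \cite{ABDKS25-KP}, so that the only genuine task is to match our explicit objects — the wave functions of \cref{d:wf} and the Baker--Akhiezer kernel of \cref{def:BA} — with those of the general construction. I would begin by recalling the shape of the result in \cite[Section~3]{ABDKS25-KP}: for generalized topological recursion on a genus-zero spectral curve, the correlators admit a representation
\[
	\omega_n(z_{[n]};\hbar) = \sum_{\sigma \in C_n} \sgn(\sigma) \prod_{i=1}^n \mathbb{K}(z_i, z_{\sigma(i)};\hbar),
\]
where $\mathbb{K}$ is a kernel assembled from a distinguished basis of wave functions solving the quantum curve attached to $(x,y,B)$, together with their $\hbar \to -\hbar$ duals. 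Since the single special point of the $(r,s)$ curve of \cref{def:rs} sits at $z = 0$ and its correlators are exactly the $\omega_n(z_{[n]};\hbar)$ of \cref{thm:gTR}, the general theorem applies verbatim, and the content of the proposition reduces to the identity $K = \mathbb{K}$ for $x = z^r$, $y = z^{s-r}$.

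The first and main verification is that the formal functions $\phi_k$, and hence the normalized wave functions $\psi_k$ of \eqref{eq:rs:Bessel:int}, are precisely the wave functions entering the construction of \cite{ABDKS25-KP} for our $x$ and $y$, after the relabeling and duality recorded in the footnote ($\phi_k^{\textup{us}} = \phi_{1-k}^{\textup{them},*}$). For this I would check (i) that the Laplace-type integral defining $\phi_k$ is the asymptotic solution of the associated quantum curve, which amounts to verifying the two recursions \eqref{eq:yhat}--\eqref{eq:xhat} by integration by parts in \eqref{eq:rs:Bessel:int}; and (ii) the normalization $\psi_k(z;\hbar) = z^{k-1}\bigl(1 + \mathrm{O}(z)\bigr)$, which fixes the basis uniquely and distinguishes the normalized $\psi_k$ from the unnormalized $\phi_k$. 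The passage $\phi_k \to \psi_k$ — dividing by $\sqrt{rz^{r-1}}$ and the exponential prefactor $e^{\frac{1}{\hbar}\frac{r}{s} z^s}$ — is exactly the removal of the ramification Jacobian and the unstable $W_{0,1}$ contribution, so that the kernel built from $\psi_k$ rather than $\phi_k$ reproduces the correct normalization of $\mathbb{K}$.

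The second verification concerns the kernel itself, including its behaviour on the diagonal. Off the diagonal, $K(z_1,z_2;\hbar) = \sum_{k\ge1}\psi^*_{1-k}(z_1)\,\psi_k(z_2)\,\sqrt{dx_1 dx_2}$ is literally the bilinear pairing of the dual and primal bases appearing in \cite{ABDKS25-KP}, and I would confirm that the index range $k \ge 1$ and the shift $1-k$ on the dual match their summation convention (again via the footnote). On the diagonal, the value defined by subtracting the universal singular part $\sqrt{dz\,dz_0}/(z - z_0)$ and restoring $\frac{1}{\hbar} y\,dx$ must be checked to coincide with their regularization prescription, which encodes the $W_{0,2}$ and $W_{0,1}$ contributions; this is where one compares the expansions of $\psi_k$ and $\psi^*_{1-k}$ near the critical point.

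The main obstacle is bookkeeping rather than conceptual: getting the dictionary exactly right — the dual-versus-primal exchange, the index relabeling $k \mapsto 1-k$, and the precise prefactors separating $\phi_k$ from $\psi_k$ — so that the abstract kernel of \cite{ABDKS25-KP} matches \eqref{eq:K:def} on the nose. Once this correspondence is pinned down, together with the observation that the general formula is applied directly to the $t = 0$ curve, the proposition follows immediately, the genuine mathematical content (that generalized topological recursion correlators obey a determinantal formula in terms of quantum-curve wave functions) being supplied entirely by \cite{ABDKS25-KP}.
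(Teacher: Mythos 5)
Your high-level strategy---quoting the determinantal formula of \cite{ABDKS25-KP} and matching its ingredients with the objects of \cref{d:wf} and \cref{def:BA}---is the same as the paper's, but your central matching claim is false, and this is a genuine gap. The kernel in which \cite[Section~3.2]{ABDKS25-KP} expresses the correlators is
\begin{equation*}
	\tilde{K}(z_1,z_2;\hbar)
	=
	\sum_{k \ge 1} \phi_{1-k}^*(z_1;\hbar)\,\phi_{k}(z_2;\hbar)\,\sqrt{dz_1\,dz_2},
\end{equation*}
built from the \emph{unnormalized} functions $\phi_k$, and this kernel does \emph{not} coincide with the $K$ of \eqref{eq:K:def}. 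Unwinding $\psi_k = e^{\frac{1}{\hbar}\frac{r}{s}z^s}\phi_k/\sqrt{rz^{r-1}}$, the Jacobian factors $\sqrt{rz^{r-1}}$ are indeed absorbed by trading $\sqrt{dz_1\,dz_2}$ for $\sqrt{dx_1\,dx_2}$, but the exponential prefactors do \emph{not} cancel pointwise: one finds
\begin{equation*}
	K(z_1,z_2;\hbar)
	=
	e^{-\frac{1}{\hbar}\frac{r}{s}(z_1^s - z_2^s)}\,\tilde{K}(z_1,z_2;\hbar).
\end{equation*}
So your plan to verify that the abstract kernel of \cite{ABDKS25-KP} ``matches \eqref{eq:K:def} on the nose'' would fail if actually carried out: the two kernels are genuinely different functions, and no amount of index bookkeeping or dual-versus-primal relabeling will make them equal.

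The missing idea---which is exactly the content of the paper's proof---is that this discrepancy is invisible to \emph{cyclic} products. In $\prod_{i=1}^n K(z_i,z_{\sigma(i)};\hbar)$ with $\sigma$ an $n$-cycle, each variable $z_i$ occurs once as a first argument (contributing $e^{-\frac{1}{\hbar}\frac{r}{s}z_i^s}$) and once as a second argument (contributing $e^{+\frac{1}{\hbar}\frac{r}{s}z_i^s}$), so all prefactors cancel and $\prod_i K(z_i,z_{\sigma(i)};\hbar) = \prod_i \tilde{K}(z_i,z_{\sigma(i)};\hbar)$ for $n \geq 2$, which gives \eqref{eq:det:K} in that range. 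For $n=1$ there is no such cancellation; instead, the regularized diagonal kernel $K(z,z;\hbar)$ is \emph{defined} with the compensating factor $e^{\frac{1}{\hbar}\frac{r}{s}(z^s-z_0^s)}$ inserted inside the limit (together with the singular subtraction and the $\frac{1}{\hbar}y\,dx$ term), precisely so that it reproduces the $\omega_1$ formula of \cite{ABDKS25-KP}; your proposed diagonal check, performed without this factor, would again reveal a mismatch rather than an agreement. Finally, note that your verifications of \eqref{eq:yhat}--\eqref{eq:xhat} and of the normalization $\psi_k = z^{k-1}(1+\mathrm{O}(z))$, while correct, are not what this proposition needs---they are used later, e.g. in \cref{prop:K:simpler} and the matrix lemmata---so they cannot substitute for the cancellation argument above.
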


\begin{proof}
	In~\cite[Section~3.2]{ABDKS25-KP}, the result is expressed in terms of the following variant of the kernel
	\begin{equation}
		\tilde{K}(z_1,z_2;\hbar) = \sum_{k \ge 1} \phi_{1-k}^*(z_1;\hbar) \phi_{k}(z_2;\hbar) \sqrt{dz_1 dz_2} .
	\end{equation}
	Specifically, the result reads
	\begin{equation}
	\begin{aligned}
		\omega_{1}(z;\hbar)
		&=
		\lim_{z_0\to z} \left(
			\tilde{K}(z,z_0;\hbar)
			-
			\frac{\sqrt{dz dz_0}}{z - z_0}
		\right) + \frac{1}{\hbar} y(z) dx(z),
		\\
		\omega_{n}(z_{[n]};\hbar)
		&=
		\sum_{\sigma \in C_n} \sgn(\sigma) \prod_{i=1}^n \tilde{K}(z_i,z_{\sigma(i)};\hbar),
		\qquad\qquad\qquad\qquad
		n \ge 2.
	\end{aligned}
	\end{equation}
	The difference between $K$ and $\tilde{K}$ is simply in the exponential prefactor:
	\begin{equation}
		K(z_1,z_2;\hbar) = e^{-\frac{1}{\hbar}\frac{r}{s} (z_1^s-z_2^s)} \tilde{K}(z_1,z_2;\hbar).
	\end{equation}
	Thus, we see that for $n = 1$ the formula follows from the definition of the kernel $K(z,z;\hbar)$ by regularization. In the formula for $n \geq 2$, all the exponential factors cancel out, as each variable $z_i$ appears precisely twice in \cref{eq:det:K}: once contributing a factor of $e^{\frac{1}{\hbar}\frac{r}{s} z_i^s} $ and the other time contributing a factor of $e^{-\frac{1}{\hbar}\frac{r}{s} z_i^s} $, which cancel out.
\end{proof}

We have the following alternative expression for the kernel as a \emph{finite} combination of wave functions, which will prove useful later on.

\begin{proposition}\label{prop:K:simpler}
	For $z_1 \neq z_2$, the Baker--Akhiezer kernel \eqref{eq:K:def} is given by
	\begin{equation}
		K(z_1,z_2;\hbar)
		=
		\frac{\sqrt{dx_1 dx_2}}{x_1 - x_2}
		\Biggl(
			\sum_{k = 1}^{r-s} x_1 \psi_{1-k}^*(z_1;\hbar)\psi_{k}(z_2;\hbar) 
			+
			\sum_{k=r-s+1}^{r}
				 \psi_{r+1-k}^*(z_1;\hbar) \psi_{k}(z_2;\hbar)
		\Biggr)
	\end{equation}
	where $x_i = z_i^r$.
\end{proposition}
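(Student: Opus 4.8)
The plan is to prove this as a Christoffel--Darboux type identity, using only the three-term relation~\eqref{eq:xhat} and its dual. Abbreviating $S(z_1,z_2;\hbar) \coloneqq \sum_{k \ge 1} \psi_{1-k}^*(z_1;\hbar)\,\psi_k(z_2;\hbar)$, so that $K = S\,\sqrt{dx_1 dx_2}$ by~\eqref{eq:K:def}, the assertion is equivalent to the finite-sum identity
\[
	(x_1 - x_2)\,S
	=
	\sum_{k=1}^{r-s} x_1\, \psi_{1-k}^*(z_1)\,\psi_k(z_2)
	+
	\sum_{k=r-s+1}^{r} \psi_{r+1-k}^*(z_1)\,\psi_k(z_2),
\]
where $x_i = z_i^r$. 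I would prove this directly, applying~\eqref{eq:xhat} in the variable $z_2$ to rewrite $x_2\,\psi_k$, and the dual relation (obtained from~\eqref{eq:xhat} by $\hbar \mapsto -\hbar$ and $k \mapsto 1-k$, since $\psi_k^*(z;\hbar)=\psi_k(z;-\hbar)$)
\[
	x_1\,\psi_{1-k}^*(z_1)
	=
	\psi_{r+1-k}^*(z_1) - \hbar\,\tfrac{r-s+1-2k}{2(r-s)}\,\psi_{r-s+1-k}^*(z_1)
\]
in the variable $z_1$ to rewrite $x_1\,\psi_{1-k}^*$.

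Substituting these into $x_1 S - x_2 S$ produces four infinite sums over $k \ge 1$: two with an integer index shift, namely $\sum_{k}\psi_{r+1-k}^*\psi_k$ from the $x_1$-side and $\sum_k\psi_{1-k}^*\psi_{k+r}$ from the $x_2$-side, and two carrying the half-integer coefficient, namely $\sum_k \psi_{r-s+1-k}^*\psi_k$ and $\sum_k\psi_{1-k}^*\psi_{k+r-s}$. I would reindex the two $x_2$-side sums by $k \mapsto k-r$ and $k \mapsto k-(r-s)$ respectively, so that each matches the summand of the corresponding $x_1$-side sum but with the range shifted to $k \ge r+1$ (resp. $k \ge r-s+1$). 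Since the matching summands are then identical term by term, the infinite tails cancel, leaving exactly the boundary blocks $\sum_{k=1}^{r} \psi_{r+1-k}^*\psi_k$ and $-\hbar\sum_{k=1}^{r-s}\tfrac{r-s+1-2k}{2(r-s)}\,\psi_{r-s+1-k}^*\psi_k$; here a short check shows the reindexed coefficient $2k+r-s-1$ becomes $-(r-s+1-2k)$, which is precisely what makes the two $\hbar$-terms combine with a single overall sign.

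To finish, I would split $\sum_{k=1}^{r} = \sum_{k=1}^{r-s} + \sum_{k=r-s+1}^{r}$ and recognize, via the dual relation displayed above, that
\[
	\sum_{k=1}^{r-s}\Bigl(\psi_{r+1-k}^* - \hbar\,\tfrac{r-s+1-2k}{2(r-s)}\,\psi_{r-s+1-k}^*\Bigr)\psi_k
	=
	\sum_{k=1}^{r-s} x_1\,\psi_{1-k}^*\,\psi_k,
\]
so that the surviving $\hbar$-block is reabsorbed into the $x_1\,\psi_{1-k}^*$ term and only $\sum_{k=r-s+1}^{r} \psi_{r+1-k}^*\psi_k$ remains. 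This is exactly the claimed right-hand side.

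The main obstacle is not the algebra but justifying the manipulation of the infinite series: convergence of $S$ and the legitimacy of the reindexings and term-by-term cancellations. I would handle this using the local expansions $\psi_k(z) = z^{k-1}\bigl(1+\mathrm{O}(z)\bigr)$ and $\psi_{1-k}^*(z) = z^{-k}\bigl(1+\mathrm{O}(z)\bigr)$ recorded in~\eqref{eq:rs:Bessel:int}: at each order in $\hbar$ the general term of $S$ behaves like $z_1^{-k}z_2^{k-1}$, so $S$ converges geometrically in the regime $|z_2|<|z_1|$, i.e.\ as an expansion of the kernel near the diagonal. In this regime all the reindexings and tail cancellations above are legitimate, and the resulting identity then holds as an identity of the (formal-in-$\hbar$) expansion of $K$, which is what is required.
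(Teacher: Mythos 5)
Your proposal is correct and follows essentially the same route as the paper's proof: multiply by $(x_1-x_2)$, apply~\eqref{eq:xhat} and its $\hbar\mapsto-\hbar$ dual, reindex so the infinite tails cancel, and then reabsorb the surviving $\hbar$-block via the dual relation into $x_1\psi^*_{1-k}$. Your closing discussion of convergence of the series (term-by-term behaviour $z_1^{-k}z_2^{k-1}$, valid reindexing for $|z_2|<|z_1|$ order by order in $\hbar$) is a welcome extra that the paper passes over silently with ``relabeling the indices.''
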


\begin{proof}
	We omit the dependence on $\hbar$ for ease of notation. Multiplying the definition of the kernel by $(x_1 - x_2)$, applying \cref{eq:xhat}, and relabeling the indices, we find
	\begin{equation}
	\begin{split}
		(x_1 - x_2)K(z_1,z_2)
		&=
		\sum_{k \ge 1}
			\bigg[
				\left( \psi_{r+1-k}^*(z_1) - \hbar \frac{2(1-k)+r-s-1}{2(r-s)} \psi_{r-s+1-k}^*(z_1) \right)
				\psi_{k}(z_2) \\
		&\qquad -
			\psi_{1-k}^*(z_1)
			\left( \psi_{k+r}(z_2) + \hbar \frac{2k+r-s-1}{2(r-s)} \psi_{k+r-s}(z_2) \right)
		\bigg]
		\sqrt{dx_1 dx_2} \\
		&=
		\Bigg(
			\sum_{k = 1}^r
				\psi_{r+1-k}^*(z_1)\psi_{k}(z_2) \\
		&\qquad\qquad
		+ \hbar
			\sum_{k=1}^{r-s}
				\frac{2k-r+s-1}{2(r-s)} \psi_{r-s+1-k}^*(z_1) \psi_{k}(z_2)
		\Bigg) \sqrt{dx_1 dx_2} .
	\end{split}
	\end{equation}
	Finally, we use \cref{eq:xhat} to absorb the explicit $\hbar$-dependent terms.
\end{proof}

\begin{example}
	For $(r,s) = (2,1)$ the kernel reads
	\begin{equation}
		K(z_1,z_2;\hbar)
		=
		\frac{\sqrt{dx_1 dx_2}}{x_1 - x_2}
		\Biggl(
			\psi_{2}^*(z_1;\hbar)\psi_{1}(z_2;\hbar) 
			+
			\psi_{1}^*(z_1;\hbar) \psi_{2}(z_2;\hbar)
		\Biggr) ,
	\end{equation}
	where we have used $x \psi_0 = \psi_2$, which is a specialization of \cref{eq:xhat}. Since $\psi_1$ is a solution of the Bessel equation and $\psi_2 = x \hbar \frac{d}{dx} \psi_1$, it follows that the Baker--Akhiezer kernel for the $(2,1)$ spectral curve is precisely the Bessel kernel appearing in matrix models; see~\cite{EGGGL} for further details.
\end{example}

\subsubsection{Matrix forms}
In light of the differential system \labelcref{eq:rs:Bessel}, it is natural to consider the following matrix solution, called the wave matrix:
\begin{equation}
	\Psi(z;\hbar)
	=
	\bigl( \Psi_{k,a}(z;\hbar) \bigr)_{k,a=1,\ldots,r}
	\coloneqq
	\bigl( \psi_k(\theta^a z;\hbar) \bigr)_{k,a=1,\ldots,r}
\end{equation}
where $\theta = e^{\frac{2\pi \iu}{r}}$ is a primitive $r$-th root of unity. By design, the wave matrix satisfies $\hbar \frac{d}{dx} \Psi = \mc{D} \Psi$ and its last column coincides with the vector of wave functions $\psi$ considered in the previous section. Similarly, we define $\Psi^*$ from the dual wave functions.

A simple computation implies that the inverse wave matrix satisfies $-\hbar \frac{d}{dx} \Psi^{-1} = \Psi^{-1}\mc{D}$, which looks like the same differential equation with the sign of $\hbar$ swapped. This would imply a simple relation between the inverse wave matrix and its dual. However, this is not immediately clear, as the matrix $\mc{D}$ does depend on $\hbar$. Nonetheless, the following relation still holds.

\begin{lemma}
	The inverse wave matrix is given by
	\begin{equation}\label{eq:Psi:inv}
		\Psi^{-1}_{a,k}(z;\hbar)
		=
		\begin{cases}
			x \psi^*_{1-k}(\theta^a z;\hbar) & 1 \le k \le r-s, \\[1ex]
			\psi^*_{r+1-k}(\theta^a z;\hbar) & r-s < k \le r.
		\end{cases}
	\end{equation}
\end{lemma}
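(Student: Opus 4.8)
=== PROOF PROPOSAL ===

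The plan is to verify directly that the matrix $\Psi^{-1}$ given in \eqref{eq:Psi:inv} is indeed the two-sided inverse of $\Psi$, by checking that the product is the identity. The key observation is that both matrices are built from wave functions $\psi_k$ and their duals $\psi_k^*$, and that the entries of the candidate inverse are precisely the combinations appearing in \Cref{prop:K:simpler}. In fact, the $(a,a')$ entry of the product $\Psi^{-1}\Psi$ is
\begin{equation*}
	\sum_{k=1}^{r-s} x \,\psi^*_{1-k}(\theta^a z)\psi_k(\theta^{a'} z)
	+
	\sum_{k=r-s+1}^{r} \psi^*_{r+1-k}(\theta^a z)\psi_k(\theta^{a'} z),
\end{equation*}
which by \Cref{prop:K:simpler} equals $(x_a - x_{a'}) K(\theta^a z, \theta^{a'} z)/\sqrt{dx_a dx_{a'}}$ where $x_a = (\theta^a z)^r = z^r = x$. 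Since all the $x_a$ coincide, the prefactor $x_a - x_{a'}$ vanishes whenever the kernel is regular, forcing the off-diagonal entries to zero; the diagonal entries require the regularized $z_1 \to z_2$ behaviour and should produce the normalization $1$.

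First I would establish the off-diagonal vanishing. For $a \neq a'$ we have $\theta^a z \neq \theta^{a'} z$ but $x_a = x_{a'} = x$, so \Cref{prop:K:simpler} applies with a finite sum; multiplying through by $(x_a - x_{a'}) = 0$ shows that the bracketed combination of wave functions — which is exactly the $(a,a')$ entry of $\Psi^{-1}\Psi$ up to the common factor $x$ or $1$ — must vanish, because $K(z_1,z_2)$ is regular off the diagonal. One subtlety is matching the per-row normalization: the entries of $\Psi^{-1}$ carry a factor $x$ in the first $r-s$ rows and $1$ afterwards, exactly mirroring the two sums in \Cref{prop:K:simpler}, so the row structure of $\Psi^{-1}$ is forced by this correspondence.

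Next I would treat the diagonal entries $a = a'$. Here the naive sum is the coincidence limit of the bracket in \Cref{prop:K:simpler}, which is singular; I would instead use the leading asymptotics $\psi_k(z;\hbar) = z^{k-1}(1 + \mathrm{O}(z))$ together with the dual expansion to extract the constant term. The cleanest route is probably to use the defining recursions \eqref{eq:yhat} and \eqref{eq:xhat}: these give a first-order ODE system $\hbar \frac{d}{dx}\Psi = \mc{D}\Psi$, and the claimed $\Psi^{-1}$ should satisfy $-\hbar\frac{d}{dx}\Psi^{-1} = \Psi^{-1}\mc{D}$, so it suffices to check that $\Psi^{-1}\Psi$ is $x$-independent (its $x$-derivative vanishes by the two ODEs) and then pin down the constant matrix by a single evaluation, e.g. reading off the leading $z \to 0$ term via the asymptotics above. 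This reduces the diagonal computation to a finite Wronskian-type normalization.

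The main obstacle I anticipate is the normalization constant on the diagonal, i.e.\ confirming that the coincidence limit yields exactly $1$ rather than some $r$- and $s$-dependent factor. This is where the precise square-root prefactors in the definition of $\psi_k$ (the $\sqrt{r-s}$, the measure $\frac{d\zeta}{\sqrt{2\pi\hbar}}$, and the critical-point normalization distinguishing $\psi_k$ from $\phi_k$) all enter, and getting the bookkeeping right for the Gaussian (stationary-phase) integral at $\zeta = z$ is delicate. I would verify the constant by checking the $x$-independence argument above reduces everything to the leading asymptotic coefficient, where the $z^{k-1}(1+\mathrm{O}(z))$ normalization makes the identity transparent; alternatively, the differential-equation argument sidesteps the integral entirely, which is why I expect it to be the more robust path to the clean answer $1$.
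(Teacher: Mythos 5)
Your proof has the same skeleton as the paper's: write $\tilde{\Psi}$ for the right-hand side of \eqref{eq:Psi:inv}, check that it satisfies $-\hbar\tfrac{d}{dx}\tilde{\Psi}=\tilde{\Psi}\,\mc{D}$, deduce that $\tilde{\Psi}\Psi$ is independent of $x$, and fix the resulting constant matrix by asymptotics. Two remarks on that skeleton. First, your off-diagonal argument via \cref{prop:K:simpler} (for $a\neq a'$ the points $\theta^a z$ and $\theta^{a'}z$ are distinct but have the same $x$, and $K$ is regular away from the diagonal $z_1=z_2$, so the finite wave-function combination must vanish) is correct, non-circular (\cref{prop:K:simpler} precedes the lemma and uses only \eqref{eq:xhat}), and is \emph{not} in the paper --- a pleasant shortcut for those entries; note only that the factor $x$ sits in the first $r-s$ \emph{columns} of $\Psi^{-1}$, not rows. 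Second, the ODE step you describe as routine is exactly the point the paper flags as non-obvious: the duals $\psi^*_k$ satisfy \eqref{eq:yhat}--\eqref{eq:xhat} with $\hbar$ replaced by $-\hbar$, i.e.\ the system with matrix $\mc{D}(x;-\hbar)\neq\mc{D}(x;\hbar)$, and it is only because of the explicit factors of $x$ in those first $r-s$ columns that the $\hbar$-dependent terms of the dual recursion cancel and $\tilde{\Psi}$ satisfies the equation with $\mc{D}(x;+\hbar)$. That ``direct computation'' is the heart of the paper's proof and must actually be carried out.

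The genuine gap is in your normalization step. The asymptotics you propose to use, $\psi_k(z;\hbar)=z^{k-1}\bigl(1+\bigO(z)\bigr)$, is the asymptotics of $\phi_k$, not of $\psi_k$: by \cref{d:wf}, $\psi_k=e^{\frac{1}{\hbar}\frac{r}{s}z^s}\phi_k/\sqrt{rz^{r-1}}$, and the exponential and square-root prefactors cannot be dropped. Running your diagonal computation with the asymptotics as quoted gives $\sum_{k=1}^{r-s}x\,z^{-k}z^{k-1}+\sum_{k=r-s+1}^{r}z^{r-k}z^{k-1}=r\,z^{r-1}$, which is neither $1$ nor even constant. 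With the correct asymptotics, the exponentials cancel on the diagonal (since $\psi^*_k=\psi_k|_{\hbar\to-\hbar}$), the two prefactors contribute $1/\bigl(r(\theta^a z)^{r-1}\bigr)$, and the split of $\tilde{\Psi}$ into an $x$-block and a $1$-block is precisely what produces $(r-s)/r+s/r=1$. Relatedly, ``a single evaluation at $z\to 0$'' is the wrong regime: rescaling $\zeta\to z\zeta$ in \eqref{eq:rs:Bessel:int} shows the $\hbar$-expansion of the wave functions is effectively an expansion in $\hbar/z^s$, so every subleading coefficient is singular at $z=0$ and the limit does not exist order by order (unless the coefficient already vanishes, which is what you are trying to prove). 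The correct way to pin down the constant --- the paper's way --- is to compare the \emph{leading coefficients of the $\hbar$-expansion} of $\tilde{\Psi}$ and of $\Psi^{-1}$, keeping the exponentials and the $\theta$-phases, and to observe that once $\tilde{\Psi}\Psi$ is known to be constant in $x$, its higher $\hbar$-coefficients carry strictly negative powers of $z$ and therefore must vanish.
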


\begin{proof}
	Denote as $\tilde{\Psi}_{a,k}(z;\hbar)$ the right-hand side of \cref{eq:Psi:inv}. A direct computation shows that the matrix $\tilde{\Psi}$ satisfies the same differential equation as that satisfied by $\Psi^{-1}$. Hence, they coincide up to a normalization factor. This factor can be fixed by looking at the first coefficient in the $\hbar$-expansion of both matrices:
	\begin{equation}
		\Psi_{k,a}(z;\hbar)
		=
		\theta^{a(k-1 - \frac{r-1}{2})}
		\frac{e^{\frac{1}{\hbar}\frac{r}{s} \theta^{as} z^s}}{\sqrt{rz^{r-1}}}
		z^{k-1}
		\bigl(
			1 + \bigO(\hbar)
		\bigr)
	\end{equation}
	which implies that
	\begin{equation}
		\Psi^{-1}_{a,k}(z;\hbar)
		=
		\theta^{-a(k-1 - \frac{r-1}{2})}
		\frac{e^{-\frac{1}{\hbar}\frac{r}{s} \theta^{as} z^s}}{\sqrt{r z^{r-1}}}
		z^{r-k}
		\bigl(
			1 + \bigO(\hbar)
		\bigr).
	\end{equation}
	One can check that the same asymptotics holds for $\tilde{\Psi}_{a,k}$, which concludes the proof.
\end{proof}

As we did for the wave function, it will prove useful to introduce the following matrix version of the kernel:
\begin{equation}
	\mc{K}(z_1,z_2;\hbar)
	=
	\left( \mc{K}_{a,b}(z_1,z_2;\hbar)\right)_{a,b=1,\ldots,r}
	\coloneqq
	\left(K(\theta^a z_1,\theta^b z_2;\hbar)\right)_{a,b=1,\ldots,r} .
\end{equation}
It can be compactly written in terms of the wave matrix as follows.

\begin{lemma}\label{lem:matK}
	The matrix kernel $\mc{K}$ can be expressed as
	\begin{equation}\label{eq:matK}
		\mc{K}(z_1,z_2;\hbar)
		=
		\begin{cases}
			- \frac{1}{\hbar} \Psi^{-1}(z_1;\hbar)\mc{D}(x_1;\hbar)\Psi(z_1;\hbar) \, dx_1,
			& \text{if }z_1 = z_2, \\[1ex]
			\frac{\Psi^{-1}(z_1;\hbar)\Psi(z_2;\hbar)}{x_1 - x_2} \sqrt{dx_1 dx_2},
			& \text{if }z_1 \neq z_2, \\
		\end{cases}
	\end{equation}
	where as usual $x_i = z_i^r$.
\end{lemma}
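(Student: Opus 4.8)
The plan is to handle the two cases separately: the off-diagonal formula by a direct substitution, and the diagonal one by a confluent limit controlled by the differential system $\hbar\frac{d}{dx}\Psi=\mc{D}\Psi$.

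For $z_1 \neq z_2$, I would substitute the closed form of \cref{prop:K:simpler} into each matrix entry $\mc{K}_{ab}(z_1,z_2) = K(\theta^a z_1, \theta^b z_2)$. The crucial elementary point is that $x$ is insensitive to the twist by roots of unity: since $\theta^r = 1$ we have $(\theta^a z_1)^r = z_1^r = x_1$ and $(\theta^b z_2)^r = z_2^r = x_2$, so the prefactor $\frac{\sqrt{dx_1 dx_2}}{x_1 - x_2}$ is common to all entries and only the two finite sums of wave functions depend on $a$ and $b$. Matching these sums against the explicit inverse wave matrix \eqref{eq:Psi:inv}, the first sum (carrying the factor $x_1$) reproduces the columns $1 \le k \le r-s$ of $\Psi^{-1}(z_1)$ and the second sum the columns $r-s < k \le r$, while $\psi_k(\theta^b z_2) = \Psi_{k,b}(z_2)$. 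Hence the bracket equals $(\Psi^{-1}(z_1)\Psi(z_2))_{ab}$, which is the claimed off-diagonal formula.

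For $z_1 = z_2$, I would study the limit $z_2 \to z_1$ of the expression just obtained. Expanding $\Psi(z_2)$ at $x_2 = x_1$ by means of $\hbar \frac{d}{dx}\Psi = \mc{D}\Psi$ gives $\Psi^{-1}(z_1)\Psi(z_2) = I + (x_2 - x_1)\frac{1}{\hbar}\Psi^{-1}(z_1)\mc{D}(x_1)\Psi(z_1) + \bigO((x_2-x_1)^2)$, so that dividing by $x_1 - x_2$ isolates a scalar simple pole $\frac{\sqrt{dx_1 dx_2}}{x_1 - x_2}\,I$ together with the regular term $-\frac{1}{\hbar}\Psi^{-1}(z_1)\mc{D}(x_1)\Psi(z_1)\,dx_1$. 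For the entries with $a \neq b$ the polar matrix $I$ contributes nothing, so $\mc{K}_{ab}(z,z)$ equals this regular term directly; for $a = b$ the simple pole is exactly the diagonal singularity that is removed in the regularized definition of $K(z,z)$, leaving the same regular term $-\frac{1}{\hbar}(\Psi^{-1}\mc{D}\Psi)_{aa}\,dx$.

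The main obstacle is to verify that the pole removal carried out here at the matrix level—subtraction of $\frac{\sqrt{dx_1 dx_2}}{x_1 - x_2}$—is compatible with the regularization actually prescribed in the definition of $K(z,z)$, which instead subtracts $\frac{\sqrt{dz_1 dz_2}}{z_1 - z_2}$ after conjugating by $e^{\frac{1}{\hbar}\frac{r}{s}(z^s - z_0^s)}$ and then adds $\frac{1}{\hbar}y\,dx$. To reconcile the two I would use two facts: first, that under $x = z^r$ the Cauchy-type kernel satisfies $\frac{\sqrt{dx_1 dx_2}}{x_1 - x_2} - \frac{\sqrt{dz_1 dz_2}}{z_1 - z_2} \to 0$ on the diagonal, so the two subtractions differ only by terms that vanish in the limit; and second, that the exponential prefactors carried by the $a$-th column of $\Psi$ and the $a$-th row of $\Psi^{-1}$ are mutually inverse, so that $(\Psi^{-1}(z_1)\Psi(z_2))_{aa}$ is free of exponential factors on the diagonal and the conjugating factor $e^{\frac{1}{\hbar}\frac{r}{s}(z^s - z_0^s)}$ acts trivially there. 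The only genuine computation is then the careful bookkeeping of these prefactors together with the $\frac{1}{\hbar}y\,dx$ contribution; this is the crux of the argument, and once the constant term is correctly pinned down, the diagonal formula follows.
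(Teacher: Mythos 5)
Your treatment of the two easy regimes is correct and matches the paper's own proof: for $z_1 \neq z_2$ the formula is exactly the combination of \cref{prop:K:simpler} with \cref{eq:Psi:inv} (using that $(\theta^a z_i)^r = z_i^r$, so all entries share the prefactor $\frac{\sqrt{dx_1 dx_2}}{x_1-x_2}$), and for coincident $z$ but $a \neq b$ your first-order Taylor expansion of $\Psi^{-1}(z_1)\Psi(z_2)$ via $\hbar \frac{d}{dx}\Psi = \mc{D}\Psi$ is the same computation as the paper's use of L'H\^{o}pital's rule.

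The gap is in the genuinely diagonal case $a = b$, and it sits precisely where you place ``the crux''. Your second reconciling fact --- that the conjugating factor $e^{\frac{1}{\hbar}\frac{r}{s}\theta^{as}(z^s - z_0^s)}$ acts trivially because it tends to $1$ on the diagonal --- is false in the operative sense. That factor multiplies a term carrying a simple pole $\frac{1}{x - x_0}$, and its exponent is of order $\hbar^{-1}$; hence in the regularized limit its \emph{first derivative} survives and contributes a finite term proportional to $\frac{1}{\hbar}\, y(\theta^a z)\, dx$ --- of the same order and magnitude as the term $\frac{1}{\hbar}\, y\, dx$ that the regularization in \cref{def:BA} adds by hand. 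This contribution is the entire substance of the paper's proof of the diagonal case: applying L'H\^{o}pital's rule, the exponential prefactor produces an extra summand which precisely cancels the $\frac{1}{\hbar}\, y\, dx$ term, and only then does the answer collapse to $-\frac{1}{\hbar}\bigl(\Psi^{-1}\mc{D}\Psi\bigr)_{aa}\, dx$. If one follows your two facts literally, the computation instead yields $-\frac{1}{\hbar}\bigl(\Psi^{-1}\mc{D}\Psi\bigr)_{aa}\, dx + \frac{1}{\hbar}\, y(\theta^a z)\, dx$, which differs from the claimed formula by a term visible already at leading order in $\hbar$, since $\bigl(\Psi^{-1}\mc{D}\Psi\bigr)_{aa} = y(\theta^a z)\bigl(1 + \bigO(\hbar)\bigr)$. (Your first fact, that $\frac{\sqrt{dx dx_0}}{x - x_0} - \frac{\sqrt{dz dz_0}}{z - z_0} \to 0$, is correct and is not where the trouble lies.) So you have correctly isolated the crux, but the heuristic you offer for it gives the wrong constant term; tracking the derivative of the exponential prefactor through the limit is not optional bookkeeping but the one computation on which the diagonal formula stands, and it is the one your proposal does not carry out.
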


\begin{proof}
	The second equation follows from rewriting the result of \cref{prop:K:simpler} using the expression obtained in \cref{eq:Psi:inv} for $\Psi^{-1}(z;\hbar)$. As for the case when $z_1=z_2$, we use L'Hôpital's rule to calculate the limit. When $a\neq b$:
	\begin{equation}
		\mc{K}_{a,b}(z,z;\hbar)
		= 
		\lim_{z_0\to z} \frac{\left( \Psi^{-1}(z;\hbar)\Psi(z_0;\hbar)\right)_{a,b}} {x-x_0} \sqrt{dx dx_0}
		=
		-\frac{1}{\hbar} \left( \Psi^{-1}(z;\hbar)\mc{D}(x;\hbar)\Psi(z;\hbar)\right)_{a,b} dx,
	\end{equation}
	where we use the matrix differential system $ \hbar \frac{d}{dx} \Psi = \mc{D}\Psi$. The case when $a=b$ is slightly more involved:
	\begin{multline}
		\mc{K}_{a,a}(z,z;\hbar)
		=
		\frac{1}{\hbar} y(\theta^a z) dx + \\
		+
		\lim_{z_0\to z} \left(
			e^{\frac{1}{\hbar}\frac{r}{s} \theta^{as}(z^s - z_0^s)}
			\frac{\left( \Psi^{-1}(z;\hbar)\Psi(z_0;\hbar)\right)_{r,r}}{x - x_0}
			\sqrt{dx dx_0}
			-
			\frac{\sqrt{dz dz_0}}{z - z_0}
		\right) . 	
	\end{multline} 
	Now, applying L'Hôpital's rule produces an extra summand due to the exponential prefactor, which precisely cancels the $y dx$ term in the first line to give the result.
\end{proof}

\subsection{Towards the loop equations}
In this section, we use the determinantal formulas for the $n$-point correlators $\omega_n$ derived in \cref{prop:det:K} to express a certain combination of these correlators as a characteristic polynomial. We essentially follow the approach of~\cite{BEM18}, adapted to our (simpler) setting. This, in turn, allows us to extract conditions on the poles of such combinations in \cref{ssec:loop:eq}, which are precisely the loop equations.

\subsubsection{Some definitions}
First, let us introduce the following $(k+n)$-differentials:
\begin{equation} \label{eq:omega-k-disc-n-conn}
	\omega_{k;n}(z_{[k]};w_{[n]};\hbar)
	\coloneqq
	\sum_{\substack{ \bm{L} \vdash [k] \\ \bigsqcup_{L \in \bm{L}} M_L = [n] } }
		\prod_{L\in \bm{L}} \omega_{|L|+|M_L|} (z_L,w_{M_L};\hbar) .
\end{equation}
They represent $k$-disconnected, $n$-connected correlators, or semi-connected correlators for short. For a graphical explanation of what these objects are, see~\cite[Section~2.2]{BBCKS}.

Then, for $k \in \set{0,\ldots,n}$, we define the correlator $\mc{E}^{(k)}_{n}(z;z_{[n]};\hbar)$ as
\begin{equation}
	\mc{E}^{(k)}_{n}(z;z_{[n]};\hbar)
	\coloneqq
	\sum_{\substack{Z \subseteq x^{\leftarrow}(z) \\ |Z|=k}} \omega_{k;n}(Z;z_{[n]};\hbar),
\end{equation}
where by convention $\mc{E}^{(0)}_{n}(z;z_{[n]};\hbar) = 1$ and
\begin{equation}
	x^{\leftarrow}(z) \coloneqq x^{-1}(x(z)) = \Set{ z' | x(z') = x(z) }
\end{equation}
is the set of points of the spectral curve mapping to the same point as $z$. Notice that $\mc{E}^{(k)}_{n}(z;z_{[n]};\hbar)$ is a $k$-differential in $z$, and a differential in $z_1, \ldots, z_n$.

The correlators \smash{$\mc{E}^{(k)}_{n}$} are the main character appearing in the abstract loop equations of the Bouchard--Eynard topological recursion, see \cite{BE17, BBCKS}. As we are summing over all preimages of the point $x(z)$, the differential \smash{$\mc{E}^{(k)}_{n}$} is actually the pullback of a $k$-differential in $x(z)$ from the base to the spectral curve. Furthermore, notice that \smash{$\mc{E}^{(k)}_{n}$} has the following genus expansion:
\begin{equation}
	\mc{E}^{(k)}_{n}(z;z_{[n]};\hbar)
	=
	\sum_{g \geq 0} \hbar^{2g-k+n} \mc{E}^{(k)}_{g,n}(z;z_{[n]}),
\end{equation}
where the differentials $\mc{E}^{(k)}_{g,n}(z;z_{[n]})$ do not depend on $\hbar$ and coincide with the ones defined in \cite[Section 2.2]{BBCKS}.

\subsubsection{A characteristic polynomial equation}
Our goal is to express the differentials $\mc{E}^{(k)}_{n}$ as the coefficients of a characteristic polynomial. In order to do so, we first need to realize the semi-connected correlators $\omega_{k;n}$ as a determinant-like expression.

To this end, define $S_{k;n} \subseteq S_{k+n}$ to be the subset of all permutations $\tau$ in $S_{k+n}$ on the elements $\set{1,2,\ldots,k+n}$ that do not fix any non-empty subset of $\set{k+1,k+2,\ldots,k+n}$. In other words, when $\tau$ is expressed as a product of disjoint cycles, each cycle that appears contains at least one element from the set $\set{1,2,\ldots,k}$.

\begin{lemma}
	Let $Z \subseteq x^{\leftarrow}(z)$ be a subset of cardinality $|Z| = k$. Choose an arbitrary ordering of $Z$ so that it can be written as the $k$-tuple $Z = (\theta^{a_1} z, \ldots, \theta^{a_k} z)$ for some pairwise distinct integers $a_1, \ldots, a_k \in [r]$. Then, we have
	\begin{equation}\label{eq:WknusingK}
		\omega_{k;n}(Z;z_{[n]},\hbar)
		=
		\sum_{\tau \in S_{k;n}} \sgn(\tau) \prod_{i=1}^{k+n} K(\hat{z}_i,\hat{z}_{\tau(i)};\hbar),
	\end{equation}
	where we have set
	\begin{equation}\label{eq:zhat}
		\hat{z}_i
		\coloneqq
		\begin{cases}
			\theta^{a_i} z & 1 \leq i \leq k, \\
			z_{i-k} & k < i \leq k+n .
		\end{cases}
	\end{equation}
\end{lemma}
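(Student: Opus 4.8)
The plan is to expand the left-hand side via its definition \eqref{eq:omega-k-disc-n-conn}, substitute the determinantal formula \eqref{eq:det:K} for each connected factor $\omega_{|L|+|M_L|}$, and then recognize the resulting sum over combinatorial data as exactly a signed sum over the permutation class $S_{k;n}$. The key observation driving everything is bookkeeping: the points $\hat z_1,\ldots,\hat z_{k+n}$ defined in \eqref{eq:zhat} collect the $k$ preimages of $z$ together with the $n$ marked points $z_{[n]}$, and the combinatorial structure of \eqref{eq:omega-k-disc-n-conn}---a set partition $\bm{L}$ of $[k]$ together with an assignment of the connected-correlator labels $M_L$ partitioning $[n]$---is precisely the data of a permutation of $\{1,\ldots,k+n\}$ whose cycles each meet $\{1,\ldots,k\}$.

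First I would fix a block $L\in\bm{L}$ together with its assigned set $M_L$, and write out $\omega_{|L|+|M_L|}$ using \eqref{eq:det:K} as a sum over cyclic permutations of the combined index set $L\sqcup M_L$. Each such cyclic permutation contributes $\sgn$ times a product of kernels $K(\hat z_i,\hat z_{\sigma(i)};\hbar)$ around that single cycle, where I translate indices in $L$ to the corresponding $\theta^{a_i}z$ and indices in $M_L$ to the corresponding $z_{j}$ via \eqref{eq:zhat}. Taking the product over all blocks $L\in\bm{L}$ then assembles a full permutation $\tau$ of $\{1,\ldots,k+n\}$: its cycles are exactly the cycles coming from each block, and the sign multiplies correctly since the cycles act on disjoint supports so $\sgn(\tau)=\prod_{L}\sgn(\text{cycle}_L)$. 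The product of kernels likewise factorizes as $\prod_{i=1}^{k+n}K(\hat z_i,\hat z_{\tau(i)};\hbar)$.

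The remaining task is to verify that the map from the data $(\bm{L},\{M_L\}_L,\{\text{cyclic order of each block}\})$ to permutations $\tau$ is a bijection onto $S_{k;n}$. Surjectivity and injectivity both reduce to the decomposition of a permutation into disjoint cycles: a permutation $\tau$ lies in $S_{k;n}$ precisely when every cycle contains at least one element of $\{1,\ldots,k\}$, which is equivalent to saying the cycles induce a partition $\bm{L}$ of $[k]$ (each cycle's $[k]$-part is one block), that each block's companion elements of $\{k+1,\ldots,k+n\}$ give the set $M_L$, and that the cyclic structure of $\tau$ on $L\sqcup M_L$ records the chosen cyclic order. The constraint in \eqref{eq:omega-k-disc-n-conn} that $\bigsqcup_L M_L=[n]$ matches the requirement that \emph{every} element of $\{k+1,\ldots,k+n\}$ lies in some cycle, i.e.\ appears in exactly one block, which is automatic since $\tau$ is a genuine permutation.

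The main obstacle---more delicate than routine---is the careful treatment of the unstable correlators. Because $K(z,z;\hbar)$ is defined by regularization in \cref{prop:det:K} and \cref{lem:matK}, the factors where a cycle revisits a coincident point (in particular singletons, which via \eqref{eq:det:K} reproduce $\omega_1$ with its $ydx$-contribution) must be tracked so that the determinantal expansion of each $\omega_{|L|+|M_L|}$ genuinely matches \eqref{eq:det:K} including the $n=1$ regularized case. Once this is handled, the sign computation is the only other point requiring care: I would check that the parity of a product of disjoint cycles equals the product of the parities, so that $\sgn(\tau)$ reassembles exactly as the product over blocks. With those two checks in place, the two signed sums over kernels coincide termwise under the bijection, proving \eqref{eq:WknusingK}.
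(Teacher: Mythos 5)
Your proposal is correct and takes essentially the same approach as the paper's proof: expand the definition \eqref{eq:omega-k-disc-n-conn}, apply the determinantal formula \eqref{eq:det:K} to each connected factor, and reorganize the resulting sum via the bijection between the data (partition of $[k]$, assignment of $M_L$, cyclic order on each block) and the disjoint-cycle decompositions of permutations in $S_{k;n}$, with signs multiplying over disjoint cycles. The regularization issue you flag is handled automatically by the paper's conventions, since coincident arguments occur only at fixed points of $\tau$ (singleton blocks), where $K(\hat z_i,\hat z_i;\hbar)$ already denotes the regularized kernel equal to $\omega_1$ by \cref{prop:det:K}.
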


\begin{proof}
	First of all, note that the both sides of \eqref{eq:WknusingK} do not depend on the choice of ordering of the elements in $Z$, since the first $k$ elements are treated symmetrically in both $\omega_{k;n}$ and $S_{k;n}$.

	Now, recall the definition~\eqref{eq:omega-k-disc-n-conn} of the semi-connected correlators, and apply the determinantal formula~\eqref{eq:det:K} to each factor $\omega_{|L|+|M_L|}(Z_L, z_{M_L}; \hbar)$. Each such factor is then a sum of products of elements of the form $\sgn(\tau_L) K(\hat{z}_i, \hat{z}_{\tau_L(i)}; \hbar)$, where $\tau_L$ is a cycle of maximal length $|L| + |M_L|$ on $L \sqcup M_L \subseteq \set{1,2,\ldots,k+n}$ (and thus \smash{$\sgn(\tau_L) = (-1)^{|L| + |M_L| - 1}$}). Thus, we find
	\begin{equation}\label{eq:omega-k-conn-n-disc} 
		\omega_{k;n}(Z; z_{[n]}, \hbar) 
		=
		\sum_{ \substack{ \bm{L} \vdash [k] \\ \bigsqcup_{L \in \bm{L}} M_L = [n] } }
		\sum_{\tau_L}
			\left( \prod_{L \in \bm{L}} \sgn(\tau_L) \right) 
			\left(
				\prod_{L \in \bm{L}} \prod_{i \in L \sqcup M_L}
				K(\hat{z}_i, \hat{z}_{\tau_L(i)}; \hbar)
			\right).
	\end{equation}
	Note that, by definition of $S_{k;n}$, each permutation $\tau \in S_{k;n}$ can be uniquely expressed as a product of disjoint cycles $\prod_{L \in \bm{L}} \tau_L$, where the cycles are indexed by the non-empty subsets $L \subseteq \set{1,\dots,k}$ that they involve. Moreover, $\sgn(\tau) = \prod_{L \in \bm{L}} \sgn(\tau_L)$. In other words, the right-hand side of~\eqref{eq:omega-k-conn-n-disc} is simply a reformulation of the right-hand side of~\eqref{eq:WknusingK}, in which the sum over $\tau \in S_{k;n}$ is reorganized as a sum over products of its disjoint cycles $\tau_L$.
\end{proof}

We can now realize the semi-connected correlators $\omega_{k;n}$ as a principal minor of size $k$ of an $r \times r$ matrix, that is as a sum over the symmetric group $S_k$ instead of $S_{k;n} \subseteq S_n$.

\begin{lemma} \label{lem:WusingSk}
	Let $Z \subseteq x^{\leftarrow}(z)$ be a subset of cardinality $|Z| = k$, arbitrarily ordered as $Z = ( \theta^{a_1}z, \ldots, \theta^{a_k}z )$. Then, we have 
	\begin{equation}\label{eq:ESk}
		\omega_{k;n}(Z;z_{[n]};\hbar)
		=
		(-1)^n
		[\epsilon_1\cdots\epsilon_n]
			\sum_{\sigma \in S_k}
				\sgn(\sigma)
				\prod_{j=1}^k
					\Bigl( \mc{J}_n(z;z_{[n]};\hbar,\epsilon) \Bigr)_{a_j,a_{\sigma(j)}} ,
	\end{equation}
	where $\mc{J}_n$ is the $r \times r$ matrix defined as
	\begin{multline}
		\mc{J}_n(z;z_{[n]};\hbar,\epsilon)
		\coloneqq \mc{K}(z,z;\hbar) + 
		\sum_{\ell = 1}^n \sum_{1 \le i_1 \neq \cdots \neq i_{\ell} \le n}
			\epsilon_{i_1} \cdots \epsilon_{i_\ell}
			\times \\
			\times
			\mc{K}(z,z_{i_1};\hbar)
			E_r
			\mc{K}(z_{i_1},z_{i_2};\hbar)
			E_r
			\cdots
			E_r
			\mc{K}(z_{i_{\ell-1}},z_{i_\ell};\hbar)
			E_r
			\mc{K}(z_{i_\ell},z;\hbar)
	\end{multline}
	and $E_r$ is the $r \times r$ elementary matrix whose only non-zero entry is $(E_r)_{r,r} = 1$. 
\end{lemma}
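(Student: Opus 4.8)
The plan is to start from the determinantal expression \eqref{eq:WknusingK} for $\omega_{k;n}$ and reorganize the sum over $\tau \in S_{k;n}$ according to how each cycle of $\tau$ interacts with the two types of variables. First I would decompose an arbitrary $\tau \in S_{k;n}$ into its disjoint cycles. By definition of $S_{k;n}$, every cycle contains at least one element of $\set{1,\ldots,k}$, i.e.\ at least one point of the form $\theta^{a_j} z$; reading each cycle off starting and ending at such points exhibits it as a concatenation of ``edges'' $\theta^{a_{j}} z \to z_{i_1} \to \cdots \to z_{i_\ell} \to \theta^{a_{j'}} z$ passing only through $w$-variables in between. Contracting every cycle to the cyclic word on its $\set{1,\dots,k}$-elements produces a permutation $\sigma \in S_k$; conversely, each $\sigma \in S_k$ together with a choice of intermediate $w$-string for each of its edges recovers a unique $\tau \in S_{k;n}$ whose $w$-elements are distributed, each used exactly once, among the edges.

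Next I would identify the per-edge contribution with the matrix kernel. The key computation is that inserting $E_r$ between consecutive matrix kernels selects the $r$-th (equivalently, since $\theta^r = 1$, the unrotated) component, so that
\begin{equation*}
	\bigl( \mc{K}(z,z_{i_1}) E_r \mc{K}(z_{i_1},z_{i_2}) E_r \cdots E_r \mc{K}(z_{i_\ell},z) \bigr)_{a,b}
	=
	K(\theta^{a} z, z_{i_1}) K(z_{i_1},z_{i_2}) \cdots K(z_{i_\ell}, \theta^{b} z),
\end{equation*}
which is exactly the product of Baker--Akhiezer kernels along the edge from $\theta^a z$ to $\theta^b z$. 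Summing over all lengths $\ell \geq 0$ and all intermediate strings, and recording each visited $w$-variable $z_{i_p}$ by the formal marker $\epsilon_{i_p}$, reproduces precisely the definition of the matrix entry $(\mc{J}_n)_{a,b}$, the $\ell = 0$ term being the regularized diagonal kernel $\mc{K}(z,z)_{a,b}$ (which absorbs the length-one cycles of $\tau$, including the $\mc{K}_{a,a}(z,z)$ fixed-point contributions).

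The third step is the bookkeeping that glues the edges back into $\omega_{k;n}$. Taking the product $\prod_{j=1}^k (\mc{J}_n)_{a_j,a_{\sigma(j)}}$ assembles one edge per element of $[k]$, and extracting the multilinear coefficient $[\epsilon_1\cdots\epsilon_n]$ retains exactly those terms in which each $w$-variable $z_1,\dots,z_n$ is used once and only once across all edges---precisely the requirement that the $w$-elements of $\tau$ fill $\set{k+1,\dots,k+n}$ without repetition. Finally I would track the signs: a cycle of $\tau$ with $m$ marked points and $p$ intermediate $w$-variables has length $m+p$ and sign $(-1)^{m+p-1}=(-1)^{m-1}(-1)^{p}$, where $(-1)^{m-1}$ is the sign of the contracted cycle inside $S_k$; since the total number of $w$-variables is $n$, the accumulated sign is $\sgn(\sigma)\,(-1)^n$, which supplies the global prefactor $(-1)^n$ and the $\sgn(\sigma)$ inside the sum of \eqref{eq:ESk}. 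Since the $a_j$ are distinct, the resulting sum over $S_k$ is the size-$k$ principal minor of $\mc{J}_n$, as asserted.

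The main obstacle I expect is making the contraction/reconstruction bijection fully rigorous: one must verify that $\tau \mapsto (\sigma, \set{\text{edge strings}})$ is a bijection onto pairs consisting of a permutation $\sigma \in S_k$ and an assignment of pairwise-disjoint, possibly empty, ordered $w$-strings to the edges of $\sigma$, and that this bijection is compatible with both the factorization of $\sgn$ and the $\epsilon$-marking. The slightly delicate point is the interplay between the ``consecutive-distinct'' summation range $i_1 \neq \cdots \neq i_\ell$ defining $\mc{J}_n$ and the ``globally-distinct'' constraint imposed by $\omega_{k;n}$; this is reconciled by noting that the extraction $[\epsilon_1\cdots\epsilon_n]$ is multilinear and therefore annihilates every term in which some $\epsilon_i$ occurs with multiplicity $\geq 2$, so the weaker range used in the definition of $\mc{J}_n$ yields the same answer after coefficient extraction.
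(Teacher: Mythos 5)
Your proposal is correct and is essentially the paper's own argument: both rest on the same bijection between permutations $\tau \in S_{k;n}$ and pairs consisting of $\sigma \in S_k$ together with disjoint ordered strings of $w$-indices attached to the edges of $\sigma$, the same identification of $E_r$-sandwiched kernel products with chains $K(\theta^{a}z, z_{i_1})\cdots K(z_{i_\ell},\theta^{b}z)$, and the same sign factorization $\sgn(\tau) = (-1)^n \sgn(\sigma)$. The only difference is direction---the paper expands the right-hand side and constructs $\tau$, while you decompose $\tau$ and reassemble the matrix expression---which is the same proof read backwards.
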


\begin{proof}
	We drop the $\hbar$ for ease of notation. Let us rewrite the right-hand side of \eqref{eq:ESk} by extracting the coefficient of $\epsilon_1\cdots\epsilon_n$:
	\begin{equation}\label{eq:WRHS}
		(-1)^n \sum_{\sigma \in S_k}
			\sgn(\sigma) 
			\sum_{\substack{I_1\sqcup \cdots \sqcup I_k = [n] \\ I_j = (i^{(j)}_1,\cdots, i^{(j)}_{|I_j|})}}
				\prod_{j=1}^k
				\mc{K}_{a_j,r}(z,z_{i^{(j)}_1})
				\mc{K}_{r,r}(z_{i^{(j)}_1},z_{i^{(j)}_2})
				\cdots
				\mc{K}_{r,a_{\sigma(j)}}(z_{i^{(j)}_{|I_j|}},z) .
	\end{equation}
	In the innermost sum, each $I_j$ for $j \in [k]$ is a possibly empty ordered set such that the disjoint union of all the $I_j$ equals $[n]$. If a certain set $I_j$ is empty, the corresponding product of kernels consists of the single term $\mc{K}_{a_j,a_{\sigma(j)}}(z,z)$. 

	From a permutation $\sigma \in S_k$ (viewed as a permutation in $S_{k+n}$) and the sets $\left(I_j\right)_{j\in[k]}$ appearing in \cref{eq:WRHS}, we can build the following permutation $\tau \in S_{k+n}$:
	\begin{equation}
		\tau
		=
		\sigma
		\circ
		\left( \sigma(1) , k+ i_1^{(1)} , \ldots , k+ i^{(1)}_{|I_1|} \right)
		\circ \cdots \circ
		\left( \sigma(k) , k+ i_k^{(k)} , \ldots , k+ i^{(k)}_{|I_k|} \right).
	\end{equation}
	In fact, the above permutation clearly lives in $S_{k;n} \subseteq S_{k+n} $, and every permutation in $S_{k;n}$ can uniquely be expressed in the above form. Using this bijection, and noting that $\sgn(\tau) = (-1)^{n}\sgn(\sigma)$, we can rewrite \cref{eq:WRHS} as 
	\begin{equation}
		\sum_{\tau \in S_{k;n} \subseteq S_{k+n}}
			\sgn(\tau) \prod_{i=1}^{k+n} K(\hat{z}_i,\hat{z}_{\tau(i)}),
	\end{equation}
	where we use the notation from \cref{eq:zhat} for $\hat{z}_i$. This concludes the proof.
\end{proof}

In light of the above lemma and \cref{eq:matK} expressing the kernel in terms of the wave matrix, it is natural to introduce\footnote{
	Lie-theoretically, $\Psi$ can be viewed as a flat section of the trivial bundle over $\P^1$ with connection $\nabla_{\hbar} = \hbar d - \mc{D} dx$. Then, $M$ is a flat section of the adjoint bundle. This is the point of view adopted in \cite{BEM18}.
}
\begin{equation}
	M(z;\hbar)
	\coloneqq
	\Psi(z;\hbar) E_r \Psi^{-1}(z;\hbar) \, dx.
\end{equation}
Again, $E_r$ denote the $r \times r$ elementary matrix whose only non-zero entry is $(E_r)_{r,r} = 1$. We also define, following the usual convention $x = z^r$ and $x_i = z_i^r$,
\begin{equation}\label{eq:calAdef}
	\mc{A}_n(z;z_{[n]};\hbar,\epsilon)
	\coloneqq -\frac{1}{\hbar} \mc{D}(x;\hbar) \, dx
	-
	\sum_{\ell = 1}^n \sum_{1 \le i_1 \neq \cdots \neq i_{\ell} \le n}
		\epsilon_{i_1} \cdots \epsilon_{i_\ell}
			\frac{
				M(z_{i_1};\hbar)\cdots M(z_{i_\ell};\hbar) \, dx
			}{
				(x-x_{i_1})(x_{i_1}-x_{i_2}) \cdots (x_{i_\ell}-x)
			}.
\end{equation}
Then, we have the following result which is a special case of~\cite[Theorem~4.3]{BEM18}.

\begin{proposition}
	We have the following formula
	\begin{equation}\label{eq:Easdet}
		\sum_{k=0}^r \left(y dx(z)\right)^{r-k} \mc{E}^{(k)}_n (z;z_{[n]};\hbar)
		=
		(-1)^{n} [\epsilon_1\cdots \epsilon_n]
		\det{\Bigl(
			y dx(z) \Id_{r\times r}
			-
			\mc{A}_n(z;z_{[n]};\hbar,\epsilon)
		\Bigr)} .
	\end{equation}
\end{proposition}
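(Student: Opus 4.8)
The plan is to read the left-hand side of \eqref{eq:Easdet} as the characteristic polynomial of the matrix $\mc{J}_n$ and then to identify $\mc{J}_n$, up to conjugation, with $\mc{A}_n$. First I would feed \cref{lem:WusingSk} into the definition of $\mc{E}^{(k)}_n$. Writing a subset $Z\subseteq x^{\leftarrow}(z)$ of cardinality $k$ as an index set $S=\set{a_1,\dots,a_k}\subseteq[r]$ via $Z=(\theta^{a_1}z,\dots,\theta^{a_k}z)$, the identity \eqref{eq:ESk} says precisely that $\omega_{k;n}(Z;z_{[n]};\hbar)$ is $(-1)^n[\epsilon_1\cdots\epsilon_n]$ times the principal $k\times k$ minor of $\mc{J}_n$ on the rows and columns indexed by $S$. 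Summing over all $Z$ of size $k$ therefore gives
\[
	\mc{E}^{(k)}_n(z;z_{[n]};\hbar)=(-1)^n[\epsilon_1\cdots\epsilon_n]\, e_k\bigl(\mc{J}_n(z;z_{[n]};\hbar,\epsilon)\bigr),
\]
where $e_k(\mc{J}_n)$ denotes the sum of all principal $k\times k$ minors of the $r\times r$ matrix $\mc{J}_n$, i.e. the $k$-th elementary symmetric function of its eigenvalues; this is consistent with the convention $\mc{E}^{(0)}_n=1$.

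Next I would assemble these into a characteristic polynomial. By the elementary identity $\sum_{k=0}^r t^{\,r-k}e_k(A)=\det(t\,\Id+A)$, multiplying by $(y\,dx(z))^{r-k}$ and summing over $k$ turns the left-hand side of \eqref{eq:Easdet} into
\[
	\sum_{k=0}^r (y\,dx)^{r-k}\mc{E}^{(k)}_n = (-1)^n[\epsilon_1\cdots\epsilon_n]\,\det\bigl((y\,dx)\,\Id+\mc{J}_n\bigr).
\]
It then remains to establish $\det((y\,dx)\Id+\mc{J}_n)=\det((y\,dx)\Id-\mc{A}_n)$, which I would obtain by exhibiting a conjugation relating $\mc{J}_n$ to $-\mc{A}_n$ and invoking invariance of the determinant under conjugation.

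The heart of the argument is this conjugation by the wave matrix, and it is where I expect the real work to be. Using \cref{eq:matK}, for $z_1\neq z_2$ one has $\mc{K}(z_1,z_2)=\frac{\Psi^{-1}(z_1)\Psi(z_2)}{x_1-x_2}\sqrt{dx_1\,dx_2}$, while the diagonal block is $\mc{K}(z,z)=-\frac1\hbar\Psi^{-1}(z)\mc{D}(x)\Psi(z)\,dx$. Conjugating a single chain by $\Psi(z)$ makes the outermost factors $\Psi^{-1}(z)$ and $\Psi(z)$ cancel against $\Psi(z)$ and $\Psi^{-1}(z)$, while each interior pair telescopes through the elementary matrix as $\Psi(z_{i_j})E_r\Psi^{-1}(z_{i_j})=M(z_{i_j})/dx_{i_j}$; simultaneously the square-root differentials combine as $\sqrt{dx\,dx_{i_1}}\sqrt{dx_{i_1}dx_{i_2}}\cdots\sqrt{dx_{i_\ell}dx}=dx\cdot dx_{i_1}\cdots dx_{i_\ell}$, the interior $dx_{i_j}$ being absorbed into the $M(z_{i_j})$. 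The net effect is the collapse
\[
	\Psi(z)\,\mc{K}(z,z_{i_1})E_r\cdots E_r\mc{K}(z_{i_\ell},z)\,\Psi^{-1}(z)=\frac{M(z_{i_1})\cdots M(z_{i_\ell})\,dx}{(x-x_{i_1})(x_{i_1}-x_{i_2})\cdots(x_{i_\ell}-x)},
\]
which reproduces, term by term, the summands of $\mc{A}_n$ in \cref{eq:calAdef}, while the $\ell=0$ block produces $\Psi\mc{K}(z,z)\Psi^{-1}=-\tfrac1\hbar\mc{D}\,dx$. Thus $\Psi\,\mc{J}_n\,\Psi^{-1}$ agrees with $\mc{A}_n$ up to the signs, so that the two characteristic polynomials in $y\,dx$ coincide.

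The main obstacle is precisely the sign and differential bookkeeping in this telescoping step. One must track carefully the signs coming from the regularized diagonal formula of \cref{lem:matK}, from the orientation of the denominators (the closing factor is $(x_{i_\ell}-x)$ rather than $(x-x_{i_\ell})$), and from the elementary-symmetric/characteristic-polynomial convention, in order to pin down the identification $\Psi\,\mc{J}_n\,\Psi^{-1}=-\mc{A}_n$ on the nose rather than merely up to an overall sign. The diagonal term deserves separate attention, since $\mc{K}(z,z)$ is the \emph{regularized} kernel that already incorporates the $\tfrac1\hbar y\,dx$ contribution, and I would check that its conjugate supplies exactly the $-\tfrac1\hbar\mc{D}\,dx$ piece of $\mc{A}_n$ consistently with the leading behaviour $\omega_1\sim\tfrac1\hbar\,y\,dx$. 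Once this identification is secured, conjugation-invariance of the determinant yields $\det((y\,dx)\Id+\mc{J}_n)=\det((y\,dx)\Id-\mc{A}_n)$, and comparing with the expression for the left-hand side completes the proof. This reproduces the structure of \cite[Theorem~4.3]{BEM18}, of which the present statement is the special case attached to the $(r,s)$ spectral curve.
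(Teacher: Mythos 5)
Your proposal is correct and coincides with the paper's own proof: both feed \cref{lem:WusingSk} into the definition of $\mc{E}^{(k)}_n$ to identify it with $(-1)^n[\epsilon_1\cdots\epsilon_n]$ times the sum of principal $k\times k$ minors of $\mc{J}_n$, assemble these minors into the characteristic polynomial, and then use \cref{lem:matK} to conjugate $\mc{J}_n$ by the wave matrix $\Psi$ into $-\mc{A}_n$ (the telescoping $\Psi(z_{i_j})E_r\Psi^{-1}(z_{i_j})=M(z_{i_j})/dx_{i_j}$ that you write out is exactly what the paper leaves implicit in the phrase ``combining the definitions''), finishing with conjugation-invariance of the determinant. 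The sign bookkeeping you flag as the main obstacle is indeed the only delicate point, and the paper handles it no more explicitly than you do---it simply asserts the identification $\mc{J}_n=-\Psi^{-1}\mc{A}_n\Psi$ as a consequence of \cref{eq:matK,eq:calAdef}.
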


\begin{proof}
	We omit the dependence on $\hbar$ and $\epsilon$ from the various functions for ease of notation. Using the expression for $\omega_{k;n}$ proved in \cref{lem:WusingSk}, we can write 
	\begin{equation}
		\mc{E}^{(k)}_n (z;z_{[n]})
		=
		(-1)^n [\epsilon_1\cdots\epsilon_n]
		\sum_{\substack{Z \subseteq x^{\leftarrow}(z) \\ Z = \set{\theta^{a_1}z,\ldots, \theta^{a_k}z}}}
			\sum_{\sigma \in S_k}
				\sgn(\sigma)
				\prod_{j=1}^k
						\Bigl(
							\mc{J}_n(z;z_{[n]})
						\Bigr)_{a_j,a_{\sigma(j)}} .
	\end{equation}
	Combining the definitions of the matrices $\mc{J}_{n}$ and $\mc{A}_{n}$, \cref{eq:matK,eq:calAdef}, with the expression of the matrix kernels in terms of the wave matrices, \cref{lem:matK}, we can write the sum over the symmetric group as 
	\begin{equation}
		(-1)^k
		\sum_{\sigma \in S_k}
			\sgn(\sigma)
			\prod_{j=1}^k
				\Bigl(	
					\Psi^{-1}(z)
					\mc{A}_n(z;z_{[n]})
					\Psi(z)
				\Bigr)_{a_j,a_{\sigma(j)}} .
	\end{equation}
	This can be viewed as a principal minor of size $k$ of the $r \times r$ matrix $\Psi^{-1}\mc{A}_n\Psi$, and the sum over $Z$ is the sum over all such principal minors. As the coefficients of the characteristic polynomial can be expressed as a weighted sum of all principal minors, we find
	\begin{equation}
		\sum_{k=0}^r \left(y dx(z)\right)^{r-k} \mc{E}^{(k)}_n (z;z_{[n]})
		=
		(-1)^{n} [\epsilon_1\cdots \epsilon_n]
		\det{\Bigl(
			y dx(z) \Id_{r\times r}
			-
			\Psi^{-1}(z)\mc{A}_n(z;z_{[n]})\Psi(z)
		\Bigr)} .
	\end{equation}
	The invariance of the determinant under conjugation implies the thesis.
\end{proof}

\subsection{Loop equations}
\label{ssec:loop:eq}
We are finally ready to derive the loop equations, which is a statement about the behavior of the correlators $\mathcal{E}^{(k)}_n$ as $x \to 0 $. Recall that the $\mathcal{E}^{(k)}_n(z;z_{[n]};\hbar)$ are pullbacks of $k$-differentials in $x = z^r$. 

\begin{theorem}[Loop equations]\label{thm:loop:eq}
	As $x \to 0$, the differentials $\mc{E}_n^{(k)}(z;z_{[n]};\hbar)$ behave as
	\begin{equation}\label{eq:loop}
		\mc{E}_n^{(k)}(z;z_{[n]};\hbar)
		=
		\begin{cases}
			\delta_{n,0} A_k \frac{dx^k}{x^{k}} + \bigO(\frac{dx^k}{x^{k-1}}) & 1 \leq k \leq r-s, \\[1ex]
			\bigO(\frac{dx^k}{x^{r-s}}) & r-s < k \leq r,
		\end{cases}
	\end{equation}
	where the constants $A_k$ are given in terms of the elementary symmetric polynomial $e_k$ by
	\begin{equation}\label{eq:Akdef}
		A_k
		\coloneqq
		e_k\left(
			\frac{2+s-r-1}{2(r-s)}, \frac{4+ s-r-1}{2(r-s)}, \ldots, \frac{2(r-s) + s-r-1}{2(r-s)}
		\right).
	\end{equation}
	It is worth noting that the constants $A_k$ vanish unless $k$ is even.
\end{theorem}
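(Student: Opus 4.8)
The plan is to read the $x\to 0$ behaviour straight off the characteristic polynomial formula \eqref{eq:Easdet}. Extracting the coefficient of $\bigl(y\,dx(z)\bigr)^{r-k}$ from both sides (and using $\det(Y\Id-A)=\sum_k(-1)^k\sigma_k(A)\,Y^{r-k}$, with $\sigma_k$ the sum of $k\times k$ principal minors) gives
\[
	\mc{E}^{(k)}_n(z;z_{[n]};\hbar)=(-1)^{n+k}\,[\epsilon_1\cdots\epsilon_n]\,\sigma_k\bigl(\mc{A}_n(z;z_{[n]};\hbar,\epsilon)\bigr).
\]
So everything reduces to the poles of the principal minors of $\mc{A}_n$ at $x=0$. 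The crucial observation is that, of the two terms in \eqref{eq:calAdef}, only $-\tfrac1\hbar\mc{D}(x;\hbar)\,dx$ is singular there: in the $\epsilon$-sum each $M(z_{i_j};\hbar)$ is a fixed matrix evaluated away from the ramification point, and the denominators $(x-x_{i_1}),\ldots,(x_{i_\ell}-x)$ are regular at $x=0$. I would therefore write $\mc{A}_n=\tfrac{dx}{x}\,N+B$, where $B$ is regular at $x=0$ and $N$ is the $x$- and $\epsilon$-independent matrix extracted from the polar part of $-\tfrac1\hbar\mc{D}\,dx$.

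Reading $N$ off \eqref{eq:calD} (equivalently \eqref{eq:yhat}--\eqref{eq:xhat}), its only nonzero entries are $N_{k,k}=-\tfrac{2k+s-r-1}{2(r-s)}$ and $N_{k,k+s}=-\tfrac1\hbar$ for $k\in[r-s]$; in particular $N$ is \emph{supported on the rows $1,\ldots,r-s$} and is upper triangular. The pole bound then follows from one linear-algebra step: expanding a $k\times k$ principal minor of $\tfrac{dx}{x}N+B$ by row-multilinearity, the term carrying $\tfrac{dx^{j}}{x^{j}}$ uses $j$ rows from $\tfrac{dx}{x}N$, and it vanishes unless those rows lie in $\{1,\ldots,r-s\}$ (otherwise it contains a zero row of $N$). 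Hence the pole order of $\sigma_k(\mc{A}_n)$ is at most $\min(k,r-s)$, which is exactly the dichotomy of \eqref{eq:loop}; since $N$ is $\epsilon$-free, the extraction $[\epsilon_1\cdots\epsilon_n]$ cannot raise this order. For $r-s<k\le r$ this already gives the claimed $\bigO(\tfrac{dx^k}{x^{r-s}})$ for all $n$, with nothing further to check.

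For $1\le k\le r-s$ I would isolate the leading coefficient. The top pole $\tfrac{dx^k}{x^k}$ forces all $k$ rows to come from $\tfrac{dx}{x}N$, so its coefficient is $\sigma_k(N)$, which by upper-triangularity equals $e_k$ of the diagonal entries, i.e.\ $(-1)^kA_k$ with $A_k$ as in \eqref{eq:Akdef}. For $n=0$ the prefactor $(-1)^{k}$ cancels this sign, giving $\mc{E}^{(k)}_0=A_k\tfrac{dx^k}{x^k}+\bigO(\tfrac{dx^k}{x^{k-1}})$. For $n\ge1$ the very same coefficient is $\epsilon$-independent and is therefore annihilated by $[\epsilon_1\cdots\epsilon_n]$, dropping the pole order to $k-1$; this is precisely the factor $\delta_{n,0}$. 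Finally, the vanishing of $A_k$ for odd $k$ follows because $j\mapsto r-s+1-j$ negates the argument $\tfrac{2j+s-r-1}{2(r-s)}$, so the multiset of arguments is invariant under $v\mapsto -v$ and all odd elementary symmetric functions vanish.

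I expect the main obstacle to be the bookkeeping that certifies the clean splitting $\mc{A}_n=\tfrac{dx}{x}N+B$: one must verify that the fixed matrices $M(z_{i_j})$ and all denominators in \eqref{eq:calAdef} are regular at $x=0$, so that $N$ is genuinely the \emph{entire} polar part and is $\epsilon$-independent, and then confirm that the multilinear expansion of the principal minors commutes harmlessly with the coefficient extraction $[\epsilon_1\cdots\epsilon_n]$. Once this decomposition and the row-support of $N$ are in place, the pole counting and the evaluation of the leading coefficient are routine.
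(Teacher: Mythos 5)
Your proposal is correct and arrives at the same dichotomy and the same constants $A_k$, but the technical execution is genuinely different from the paper's. The paper also starts from \eqref{eq:Easdet} and from the observation that $\mc{A}_n + \tfrac{1}{\hbar}\mc{D}\,dx$ is regular at $x=0$, but it then analyzes the full determinant $\det\bigl(y\,dx\,\Id - \mc{A}_n\bigr)$ by Laplace expansion, classifying the permutations that produce the top poles (cycles $\sigma_p$ along the $(s+1)$-th upper diagonal versus identity pieces $\tau_p$ along the main diagonal, and products thereof) while tracking powers of $y$ and $1/x$ simultaneously; the constants emerge from expanding $y^s\prod_{i=1}^{r-s}\bigl(y+\tfrac{1}{x}\tfrac{2i+s-r-1}{2(r-s)}\bigr)$. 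You instead pass immediately to the coefficients $\sigma_k(\mc{A}_n)$ (sums of principal minors), split $\mc{A}_n=\tfrac{dx}{x}N+B$ into polar and regular parts, and obtain the pole bound $\min(k,r-s)$ from row-multilinearity plus the fact that $N$ is supported on rows $[r-s]$; the leading coefficient is then $\sigma_k(N)=e_k(\mathrm{diag}\,N)=(-1)^kA_k$ by upper-triangularity, and the $\delta_{n,0}$ factor falls out of the $\epsilon$-independence of $N$ exactly as in the paper. This packaging is cleaner: the paper's cycle contributions $\sigma_p$ are absorbed automatically (inside the top-pole terms they vanish by triangularity of $N$; in mixed terms they are covered by the row count), and no case analysis of permutations is needed. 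One point you should make explicit rather than gloss over: extracting the coefficient of $(y\,dx)^{r-k}$ from \eqref{eq:Easdet} is not literally legitimate as an identity of functions on the curve, since $y$ and $x$ are dependent there and $y$ does not even separate the $r$ sheets when $\gcd(r,s)>1$, so a degree-$r$ polynomial identity in $y$ on the fiber does not force equality of coefficients. The step is nevertheless valid because the proof of \eqref{eq:Easdet} in the paper actually establishes the coefficient-wise identity $\mc{E}^{(k)}_n=(-1)^{n+k}[\epsilon_1\cdots\epsilon_n]\,\sigma_k\bigl(\Psi^{-1}\mc{A}_n\Psi\bigr)$ (the sum over $Z$ is exactly the sum over principal $k\times k$ minors), combined with conjugation-invariance of the $\sigma_k$; the paper's own pole analysis relies on the same refinement, so you should cite that rather than ``equate coefficients''. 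With that reference in place, the remaining bookkeeping you flag (regularity of the $M(z_{i_j})$ factors and of the denominators at $x=0$, and compatibility of the multilinear expansion with $[\epsilon_1\cdots\epsilon_n]$) is indeed routine, and your argument is complete.
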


\begin{proof}
	We begin with \eqref{eq:Easdet} for the differentials $\mathcal{E}^{(k)}_n$. Write 
	\begin{equation}
		\mathcal{P} dx
		\coloneqq
		y dx \, \Id_{r\times r} - \mc{A}_n
		=
		y dx \, \Id_{r\times r} + \frac{1}{\hbar} \, \mc{D} dx
		- \left( \mathcal{A}_n + \frac{1}{\hbar} \, \mc{D} dx \right).
	\end{equation}
	The goal is to understand the terms of highest order in $1/x$, i.e. the most singular ones as $x \to 0$, contributing to the determinant of $\mathcal{P} dx$. Note that all the entries of $\mathcal{A}_n + \frac{1}{\hbar} \, \mc{D} dx$ are regular as $x \to 0$. From the form of $\mathcal{D}$ in \cref{eq:calD}, we see that there are singular terms appearing on the main diagonal and the $(s+1)$-th upper diagonal. Thus, all the singular contributions come from these elements of $\mathcal{D}$.
	
	The key observation is to note that in the Laplace expansion for the determinant, the following two types of permutations contribute the highest powers of $1/x$ to the determinant.
	\begin{itemize}
		\item
		For any $p \in [s]$, consider the permutation
		\begin{equation}
			\sigma_p
			\coloneqq
			\left(
				p , (s+p) , (2s+p) , \cdots , \left\lfloor \frac{r-p}{s} \right\rfloor s + p
			\right) .
		\end{equation}
		Then, in the Laplace expansion of $\det \mathcal{P}dx$, we pick $\left\lfloor \frac{r-p}{s} \right\rfloor $ of the $1/x$ terms appearing on the $(s+1)$-th upper diagonal of $\mc{D}$. More precisely, $\sigma_p$ contributes 
		\begin{equation}
			\mathcal{P}_{\left\lfloor \frac{r-p}{s} \right\rfloor s + p,p }
			\prod_{b=1}^{\left\lfloor \frac{r-p}{s} \right\rfloor }	\mathcal{P}_{(b-1)s + p,bs+p}
			=
			\alpha \, x^{-\left\lfloor \frac{r-p}{s} \right\rfloor}
			+
			\bigO\bigl( x^{-\left\lfloor \frac{r-p}{s} \right\rfloor +1} \bigr)
		\end{equation}
		where $\alpha$ is a constant independent of $x$ and $y$.
		
		\item
		For any $p \in [s]$, consider the permutation $\tau_p$ which acts as the identity on all the integers that appear in $\sigma_p$. Then the contribution from $\tau_p$ is of the form
		\begin{equation}
			\prod_{b=0}^{\left\lfloor \frac{r-p}{s} \right\rfloor }
			\mathcal{P}_{bs + p,bs+p}
			=
			\beta \, y \, x^{-\left\lfloor \frac{r-p}{s} \right\rfloor}
			+
			\bigO\bigl( x^{-\left\lfloor \frac{r-p}{s} \right\rfloor+1} \bigr),
		\end{equation}
		where $\beta$ is a constant independent of $x$ and $y$, and the terms in $\bigO(x^{-\left\lfloor \frac{r-p}{s} \right\rfloor+1})$ may depend on $y$. In this case, in the Laplace expansion, we pick the $1/x$ terms appearing in the main diagonal of $ \mc{D} $ for all the terms except the last. The last term is chosen to be $y$ as $ \left\lfloor \frac{r-p}{s} \right\rfloor s + p > r-s$, and thus, we cannot choose a term of the form $1/x$.
	\end{itemize}
	
	The most singular terms in the determinant are then produced by permutations that are products of the permutations $\sigma_p$ and $\tau_p$ for $p \in [s]$. In other words, we need to choose a permutation $\sigma_p$ or $\tau_p$ for every $p \in [s]$. Depending on the number of times we choose $\sigma_p$, say $0 \leq t \leq s$ times, the most singular term is 
	\begin{equation}
		\gamma \, x^{-\sum_{p=1}^{s} \left\lfloor \frac{r-p}{s} \right\rfloor} y^{s-t}
		=
		\gamma \, x^{s-r} y^{s-t} ,
	\end{equation}
	where $\gamma$ is a constant independent of $x$ and $y$. This proves the thesis for $r-s < k \leq r$ in \cref{eq:loop}.
	
	Consider now the case $1 \leq k \leq r-s$. In order to get terms that contain $y^{s+t}$ for $t \in [r-s]$, we need to consider a permutation $\sigma$ in $S_r$ that acts as the identity on at least $s+t$ elements. Our previous argument shows that the most singular contributions that also contain the highest powers of $y$ are obtained by choosing the permutation $\tau_p$ for every $p \in [s]$. Thus the most singular terms we get are given by
	\begin{equation}
		y^s \prod_{i=1}^{r-s} \left( y + \frac{1}{x} \frac{2i + s -r -1}{2(r-s)} \right) = \sum_{t=0}^{r-s} y^{s+t} A_{r-s-t}\frac{1}{x^{r-s-t}} ,
	\end{equation}
	where $A_k$ is defined in \cref{eq:Akdef}. Notice that, as the above terms are independent of $\epsilon_{[n]}$, they only contribute to the correlator \smash{$\mathcal{E}^{(k)}_n$} when $n = 0$ due to \cref{eq:Easdet}.
	
	When $n >0$, we are forced to choose a term $(\mathcal{A}_n + \frac{1}{\hbar} \, \mc{D} dx)_{c,c}$ for some $1 \leq c \leq r-s$ on the diagonal instead of a term from $\mc{D}_{c,c}$ that contributes $1/x$. Thus the most singular contribution with the power $y^{s+t}$ is
	\begin{equation}
		\delta \, y^{s+t} \left( \frac{1}{x}\right)^{r-s-t-1}
	\end{equation}
	for some $t \in [r-s]$, and some constant $\delta$ independent of $x$ and $y$. This concludes the proof.
\end{proof}

\subsection{Solving the loop equations}
The loop equations of \cref{thm:loop:eq} do not admit a unique solution in general, as will become clear in \cref{sec:W:constraints} when we discuss the associated $\mathcal{W}$-constraints. However, for certain special values of $s \in [r-1]$, they do yield a unique solution.

Let us introduce some notation before stating the results. Given correlators $\omega_{g,n}$, we define the combination $\omega'_{g,k;n}$ by
\begin{equation}
	\omega'_{g,k;n} (z_{[k]},w_{[n]})
	\coloneqq
	\sum'_{\substack{ \bm{L} \vdash [k] \\ \bigsqcup_{L \in \bm{L}} M_L = [n] \\ \sum_{L \in \bm{L}} (g_L-1) = g - k} }
	\prod_{L\in \bm{L}} \omega_{g_L,|L|+|M_L|} (z_L,w_{M_L}) ,
\end{equation}
where the prime on the sum indicates that we omit all terms in which $\omega_{0,1}$ appears. These are the genus-$g$ semi-connected correlators without disks. For a graphical interpretation of this expression, see~\cite[Section~2.2]{BBCKS}.

\subsubsection{The case $s = r-1$}
When $s = r-1$, the loop equations of \cref{thm:loop:eq} have a unique solution, which is given by the Bouchard--Eynard topological recursion on the $(r,r-1)$ spectral curve (recall the \cref{def:rs} of the $(r,s)$ spectral curve). This provides an alternate proof of the following result of~\cite{CGG}. 

\begin{proposition}
	The Bouchard--Eynard topological recursion correlators $\omega_{g,n}$ on the $(r,r-1)$ spectral curve given by $x = z^r$ and $y = z^{-1}$ are generating functions for the descendant integrals of the $\Theta^{r,r-1}$-classes. More precisely:
	\begin{equation}
		\omega_{g,n}(z_{[n]})
		=
		\left( \frac{-1}{r} \right)^{2g-2+n}
		\sum_{\substack{ k_1,\dots,k_n \ge 0 \\ a_1,\dots,a_n \in [r-1] }}
		\int_{\overline{\mathcal{M}}_{g,n}}
			\Theta^{r,r-1}_{g,n}(a) \prod_{i=1}^n \psi_i^{k_i} d \xi_{k_i,a_i}(z_i)
	\end{equation}
	with $d \xi_{k,a}(z) = (rk+a)!^{(r)}\frac{dz}{z^{rk+a+1}}$.
\end{proposition}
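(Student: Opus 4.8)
The plan is to combine the generalized topological recursion statement already established with a uniqueness property of the loop equations. By \cref{thm:gTR}, the correlators $\omega_{g,n}$ produced by \emph{generalized} topological recursion on the $(r,r-1)$ spectral curve are exactly the generating series of $\Theta^{r,r-1}$-integrals on the right-hand side of the claimed identity. Thus it is enough to show that, for $s = r-1$, these correlators agree with the \emph{Bouchard--Eynard} correlators on the same curve; I will do this by checking that both satisfy the same loop equations and invoking uniqueness.

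First I would specialize \cref{thm:loop:eq} to $s = r-1$, where $r - s = 1$. The single argument of the symmetric polynomial defining $A_1$ in \eqref{eq:Akdef} is then $\tfrac{2 + (r-1) - r - 1}{2} = 0$, so $A_1 = 0$; consequently \eqref{eq:loop} becomes $\mc{E}_n^{(1)} = \bigO(dx)$, i.e.\ holomorphic at $z = 0$, together with $\mc{E}_n^{(k)} = \bigO(dx^k/x)$ for $2 \le k \le r$. As observed in the discussion preceding \cref{thm:loop:eq}, these are precisely the linear and higher abstract loop equations of the Bouchard--Eynard topological recursion on the $(r,r-1)$ spectral curve \cite{BE17,BBCKS}.

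Next I would verify the remaining structural hypotheses needed for uniqueness. Symmetry of $\omega_{g,n}$ is immediate from the intersection-theoretic expression \eqref{eq:Theta:rs:omega}, since the $\Theta^{r,r-1}$-classes form a cohomological field theory. Each $\omega_{g,n}$ is a finite combination of the one-forms $d\xi_{k_i,a_i}(z_i) = (rk_i+a_i)!^{(r)}\, dz_i/z_i^{rk_i+a_i+1}$, whose only poles are at $z = 0$; moreover $a_i \in [r-1]$ forces $rk_i+a_i \ge 1$, so no $dz_i/z_i$ term occurs and the correlators are residue-free. Finally the unstable data match, the $\delta_{n,1}$ and $\delta_{n,2}$ terms of \eqref{eq:Theta:rs:omega} being $\omega_{0,1} = y\,dx = z^{-1}d(z^r)$ and $\omega_{0,2} = B$. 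Since the $(r,r-1)$ curve is admissible ($r \equiv 1 \pmod{r-1}$), the Bouchard--Eynard correlators are the unique family of symmetric, residue-free differentials with poles only at $z = 0$ solving these loop equations \cite{BE17,BBCKS}. The generalized correlators form such a family, hence coincide with them; together with \cref{thm:gTR} this proves the proposition.

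The main obstacle is the identification carried out in the second paragraph: confirming that the pole bounds of \eqref{eq:loop} at $s = r-1$, including the vanishing $A_1 = 0$, reproduce exactly the normalized abstract Bouchard--Eynard loop equations of \cite{BE17,BBCKS}, and that the hypotheses of the associated uniqueness theorem hold for this spectral curve. Once this matching is secured, uniqueness applies directly and the remaining steps are routine.
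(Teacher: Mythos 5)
Your proposal is correct and follows essentially the same route as the paper: specialize \cref{thm:loop:eq} to $s=r-1$, use $A_1=0$ to recognize the resulting pole bounds as the standard Bouchard--Eynard abstract loop equations, invoke the uniqueness of their normalized solution (the paper cites Appendix~C of \cite{BBCCN24}, you cite \cite{BE17,BBCKS}), and combine with \cref{thm:gTR}/\cref{thm:WgnTheta}. Your explicit verification of the side hypotheses (symmetry, poles only at $z=0$, residue-freeness from $rk_i+a_i\ge 1$, matching unstable data) is a welcome elaboration of what the paper leaves implicit.
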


\begin{proof}
	After expanding in $\hbar$, the loop equations of \cref{thm:loop:eq} read, for any $1 \leq k \leq r$,
	\begin{equation}
		\mathcal{E}^{(k)}_{g,n} (z;z_{[n]})
		=
		\bigO\biggl( \frac{dx^k}{x^{1-\delta_{k,1}}} \biggr)
	\end{equation}
	as $A_1 = 0$. Thus, the loop equations of \cref{thm:loop:eq} coincide with the loop equations found in \cite{BBCCN24} for the Bouchard--Eynard topological recursion on the $(r,r-1)$ spectral curve. Appendix~C of \textit{loc. cit.} proves that the solution to these loop equations is unique. Combining this with \cref{thm:WgnTheta} gives the statement of the proposition.
\end{proof}

\begin{remark}
	In fact, one can obtain this theorem directly as a corollary of \cref{thm:gTR}. Indeed, when $s = r-1$, by taking limits one can show that the generalized topological recursion of~\cite{ABDKS25-gTR} coincides with the Bouchard--Eynard topological recursion of~\cite{BE13,BHLMR14}. See~\cite[Section~5.4]{ABDKS25-gTR} for more details.
\end{remark}

\subsubsection{The case of $s = 1$}
When $s=1$, a more general version of the loop equations of \cref{thm:loop:eq} has been studied in \cite{BBKN} under the name of shifted loop equations. The authors also prove that the unique solution to the shifted loop equations is constructed by the so-called shifted topological recursion. Combining this result with \cref{thm:WgnTheta} gives the following result. 

\begin{proposition}
	The shifted topological recursion correlators $\omega_{g,n}$ on the $(r,1)$ spectral curve given by $x = z^r$ and $y = z^{1-r}$ with the convention that $A_k = 0$ for all $k > r-1$, i.e.
	\begin{multline}\label{eq:shiftedTR}
		\omega_{g,1+n}(z_0,z_{[n]})
		\coloneqq 
		\Res_{z= 0} \sum_{\substack{Z \subseteq x^{\leftarrow}(z) \setminus \{z\} \\ |Z| \geq 1 }}
			\frac{dz_0}{(z-z_0) \prod_{z '\in Z} \left(y(z')-y(z)\right) dx(z) }
			\omega'_{g,1+|Z|;n}( z, Z;z_{[n]}) + \\
			+ (-1)^r A_{2g}\frac{dz_0}{z_0^{2g}},
	\end{multline}
	are generating functions for the descendant integrals of the $\Theta^{r,1}$-classes. More precisely:
	\begin{equation}
		\omega_{g,n}(z_{[n]})
		=
		\left( \frac{-1}{r} \right)^{2g-2+n}
		\sum_{\substack{ k_1,\dots,k_n \ge 0 \\ a_1,\dots,a_n \in [r-1] }}
		\int_{\overline{\mathcal{M}}_{g,n}}
			\Theta^{r,1}_{g,n}(a) \prod_{i=1}^n \psi_i^{k_i} d \xi_{k_i,a_i}(z_i)
	\end{equation}
	with $d \xi_{k,a}(z) = (rk+a)!^{(r)}\frac{dz}{z^{rk+a+1}}$.
\end{proposition}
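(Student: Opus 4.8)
The plan is to recognize the loop equations of \cref{thm:loop:eq}, specialized to $s=1$, as the shifted loop equations of \cite{BBKN}, to invoke the uniqueness and reconstruction theorems proved there, and then to conclude with \cref{thm:WgnTheta}.

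First I would specialize \cref{thm:loop:eq} to $s=1$. The polarity conditions then become $\mc{E}^{(k)}_n = \delta_{n,0} A_k \tfrac{dx^k}{x^k} + \bigO(\tfrac{dx^k}{x^{k-1}})$ for $1 \le k \le r-1$, together with $\mc{E}^{(r)}_n = \bigO(\tfrac{dx^r}{x^{r-1}})$. I would then compare these, line by line, with the shifted loop equations of \cite{BBKN} on the $(r,1)$ spectral curve of \cref{def:rs}, checking that the shift constants coincide with the $A_k$ of \eqref{eq:Akdef} (which vanish for odd $k$) under the convention $A_k = 0$ for $k > r-1$. This is the step where the phrase ``particular case'' of the shifted loop equations must be made precise.

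Next I would verify that the generalized topological recursion correlators meet the polarity hypothesis under which \cite{BBKN} establishes uniqueness: by the determinantal formula of \cref{thm:WgnTheta} (equivalently \cref{thm:gTR}), each $\omega_{g,n}$ is a symmetric meromorphic differential whose only poles lie at $z=0$, since $d\xi_{k,a}(z)$ has poles only there. With this in hand, the uniqueness result of \cite{BBKN} guarantees that the shifted loop equations admit at most one such solution, and \emph{loc.\ cit.} shows that this solution is exactly the one produced by the shifted topological recursion formula \eqref{eq:shiftedTR}. Because the generalized topological recursion correlators satisfy these loop equations by the first step, they must coincide with the shifted topological recursion correlators, and the claimed descendant-integral formula then follows at once from \cref{thm:WgnTheta} with $s=1$.

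The main obstacle is the comparison carried out in the first step. It requires matching the conventions of \cite{BBKN} with ours precisely---the normalization of the spectral data, the pole orders in the polarity conditions, and above all the shift constants---so that the hypotheses of the uniqueness theorem of \emph{loc.\ cit.}\ hold verbatim. In particular one must confirm that the explicit shift term $(-1)^r A_{2g} \tfrac{dz_0}{z_0^{2g}}$ appearing in \eqref{eq:shiftedTR} is precisely the one dictated by the constants $A_k$ of \eqref{eq:Akdef}, so that no spurious additional solutions are introduced and the identification with the generalized topological recursion correlators is forced.
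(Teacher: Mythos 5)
Your proposal is correct and follows essentially the same route as the paper: specialize the loop equations of \cref{thm:loop:eq} to $s=1$ (with the convention $A_r=0$), identify them---after expanding in $\hbar$, which localizes the shift $A_k$ to genus $g=k/2$ and produces the explicit term $(-1)^r A_{2g}\,dz_0/z_0^{2g}$---with the shifted loop equations of \cite{BBKN}, apply the uniqueness/reconstruction result \cite[Theorem~3.10]{BBKN}, and conclude via \cref{thm:gTR}. Your extra verification that the correlators have poles only at $z=0$ (needed for the polarization hypothesis of \emph{loc.\ cit.}) is a sensible explicit addition that the paper leaves implicit.
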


\begin{proof}
	The loop equations of \cref{thm:loop:eq} for $s = 1$ can be expressed as 
	\begin{equation}
		\mc{E}_{g,n}^{(k)}(z;z_{[n]};\hbar)
		=
		\delta_{n,0} \delta_{g,k/2} A_k \frac{dx^k}{x^{k}}
		+
		\bigO\biggl( \frac{dx^k}{x^{k-1}} \biggr),
	\end{equation}
	for any $1 \leq k \leq r$ after expanding in $\hbar$. Here, we adopt the convention that $A_r = 0$. Then, the shifted topological recursion formula \eqref{eq:shiftedTR} is obtained by applying \cite[Theorem~3.10]{BBKN} and evaluating the terms corresponding to the shift explicitly. The statement about the descendant integrals is then a direct consequence of \cref{thm:gTR}.
\end{proof}

\subsubsection{The case of $2 \leq s \leq r-2$ with $r ,s$ coprime}
In all remaining cases, the loop equations of \cref{thm:loop:eq} do not determine a unique solution (see \cref{sec:W:constraints} for further discussion). However, when $(r,s)$ are coprime, one can write a Bouchard--Eynard-style formula for $\omega_{g,1}$, assuming that all correlators $\omega_{g',n'}$ with $2g'-2+n' < 2g-1$ are known.

\begin{lemma}
	Let $(r,s)$ be coprime. Then the correlator $\omega_{g,1}$ of the generalized topological recursion on the $(r,s)$ spectral curve admits the following formula:
	\begin{equation}\label{eq:shiftedTRn=1}
		\omega_{g,1}(z_0) = 
		\Res_{z= 0} \sum_{\substack{Z \subseteq x^{\leftarrow}(z) \setminus \{z\} \\ |Z| \geq 1 }}
			\frac{dz_0}{(z-z_0)\prod_{z '\in Z} \left(y(z')-y(z)\right) dx(z) }
			\omega'_{g,1+|Z|;0}( z, Z;\emptyset)
		+ (-1)^r A_{2g}\frac{dz_0}{z_0^{2g}},
	\end{equation}
	where we adopt the convention that $A_{k} = 0$ for all $k > r-s $.
\end{lemma}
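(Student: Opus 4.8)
The plan is to recover $\omega_{g,1}$ from its singular behaviour at the ramification point $z=0$ and to feed in the loop equations of \cref{thm:loop:eq}, adapting to the shifted setting the standard derivation of the Bouchard--Eynard recursion from abstract loop equations \cite{BE17,BBCKS,BBCCN24}. The input I would use is that, by the explicit formula \eqref{eq:Theta:rs:omega} of \cref{thm:gTR}, for $g\ge 1$ the differential $\omega_{g,1}(z)$ has its only singularity at $z=0$ and is residue-free there: indeed it is a combination of the forms $d\xi_{k,a}$, each of which has a pole of order $rk+a+1\ge 2$ (since $a\in[r-1]$) and is holomorphic at $z=\infty$. Hence $\omega_{g,1}$ is determined by its principal part at $z=0$ and satisfies the Cauchy-kernel reconstruction
\begin{equation}
	\omega_{g,1}(z_0)=\Res_{z=0}\frac{dz_0}{z_0-z}\,\omega_{g,1}(z),
\end{equation}
which is the role played by the factor $dz_0/(z-z_0)$ in \eqref{eq:shiftedTRn=1} (the sign being absorbed in the manipulation below).

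The coprimality hypothesis enters exactly once. Since $y(\theta^{a}z)=\theta^{a(s-r)}z^{s-r}$ and $\gcd(s-r,r)=\gcd(s,r)=1$, the values $y(z')$ over $z'\in x^{\leftarrow}(z)$ are pairwise distinct; this is what makes the recursion kernel $1/\bigl(\prod_{z'\in Z}(y(z')-y(z))\,dx(z)\bigr)$ well defined, and it is why the non-coprime cases are excluded. I would then specialise the loop equations of \cref{thm:loop:eq} to $n=0$ and pass to their genus-graded form via $\mc{E}^{(k)}_0=\sum_g\hbar^{2g-k}\mc{E}^{(k)}_{g,0}$. Because the anomalous terms $A_k\,dx^k/x^k$ are $\hbar$-independent, they sit in $\mc{E}^{(k)}_{k/2,0}$, so the anomaly affects $\omega_{g,1}$ only through $k=2g$; the convention $A_k=0$ for $k>r-s$ then records that the anomaly is absent beyond degree $r-s$, exactly as in \eqref{eq:Akdef}.

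The core step is the combinatorial rewriting that turns these pole bounds into \eqref{eq:shiftedTRn=1}. Writing $\mc{E}^{(k)}_0(z)=\sum_{|Z|=k}\omega_{k;0}(Z)$ over $k$-subsets $Z\subseteq x^{\leftarrow}(z)$, and using the determinantal formula of \cref{prop:det:K} together with the characteristic-polynomial identity \eqref{eq:Easdet}, I would separate the disk factors $\omega_{0,1}=y\,dx$ from the stable ones. Resumming the disks over the chosen preimages produces precisely the factors $\prod_{z'\in Z}(y(z')-y(z))$ in the denominator, while the stable contributions organise into the disk-free correlators $\omega'_{g,1+|Z|;0}(z,Z;\emptyset)$; the genus constraint $\sum_{L}(g_L-1)=g-(1+|Z|)$ built into $\omega'$ guarantees that every resulting factor has Euler characteristic at most $2g-2$, so that \eqref{eq:shiftedTRn=1} is a genuine recursion expressing $\omega_{g,1}$ through correlators of strictly lower complexity. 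Extracting from the loop-equation bounds the principal part of $\omega_{g,1}(z)$ and inserting it into the reconstruction then yields the residue sum, by the same mechanism as \cite[Appendix~C]{BBCCN24} and \cite[Theorem~3.10]{BBKN}, now carried through with the shift.

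Finally, the $\hbar$-independent piece $A_{2g}\,dx^{2g}/x^{2g}$ of $\mc{E}^{(2g)}_{g,0}$ is not produced by any stable correlator, and after passing through the reconstruction kernel and evaluating the residue at $z=0$ it contributes the explicit term $(-1)^{r}A_{2g}\,dz_0/z_0^{2g}$. I expect this last bookkeeping to be the main obstacle: matching the pole orders precisely, pinning down the sign $(-1)^{r}$ and the overall constant, and verifying that the disk resummation reproduces the kernel $1/(\prod(y(z')-y(z))\,dx)$ on the nose. Coprimality keeps the $y$-values separated throughout, preventing the kernel from developing spurious higher-order poles, which is ultimately what allows the clean one-point formula \eqref{eq:shiftedTRn=1} to hold.
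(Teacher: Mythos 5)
There is a genuine gap, and it sits exactly where you defer to ``the same mechanism as \cite[Appendix~C]{BBCCN24} and \cite[Theorem~3.10]{BBKN}''. Your plan is to feed the bounds of \cref{thm:loop:eq} (specialized to $n=0$) into that residue-extraction machinery, with coprimality entering only to keep the kernel $\prod_{z'\in Z}(y(z')-y(z))$ nonzero. But the bounds of \cref{thm:loop:eq} are strictly too weak for that machinery when $2\le s\le r-2$, even for coprime pairs such as $(r,s)=(5,2)$. The residue mechanism kills the remainder of the $k$-th loop equation only if its pole order in $x$ is at most $k-1-\lfloor s(k-1)/r\rfloor$ (these are precisely the Bouchard--Eynard-type bounds of \cite{BBCCN24}, equivalently the mode ranges \eqref{eq:rsmodes}); \cref{thm:loop:eq} only guarantees pole order $\le k-1$ for $k\le r-s$ and $\le r-s$ for $k>r-s$, e.g.\ for $(r,s)=(5,2)$, $k=4$ one needs order $\le 2$ but only has $\le 3$, so spurious residues survive. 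One can also see the failure without any pole counting, from the paper's own uniqueness analysis: by \cref{thm:W:const} and the proof of \cref{p:reduced}, the constraints equivalent to \cref{thm:loop:eq} leave the coefficients $F_{g,1}[m]$ with $m\in K_{r,s}$ completely undetermined by lower-complexity data (and $K_{r,s}\neq\emptyset$ in this range, e.g.\ $1\in K_{r,s}$), whereas formula \eqref{eq:shiftedTRn=1} determines \emph{all} of $\omega_{g,1}$ from lower data. Hence \eqref{eq:shiftedTRn=1} cannot be a formal consequence of \cref{thm:loop:eq} plus the projection property, which is all your argument uses.

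What the paper actually does---and this is the missing idea, as well as the true role of coprimality---is to prove a \emph{stronger, exact} form of the $n=0$ loop equations before invoking \cite{BBKN}. For $n=0$ the right-hand side of \eqref{eq:Easdet} is exactly $dx^r\det\bigl(y\,\Id_{r\times r}+\tfrac{1}{\hbar}\mc{D}\bigr)$ with no $\epsilon$-corrections, and this determinant can be computed in closed form: the nonzero entries of $\mc{D}$ in \eqref{eq:calD} sit on the main diagonal, the $(s+1)$-th upper diagonal and the $(r-s+1)$-th lower diagonal, so a contributing cycle must use $a$ steps of $+s$ and $b$ steps of $-(r-s)$ with $as=b(r-s)$, i.e.\ $(a+b)s=br$; coprimality forces $a+b=r$, so only the identity-type permutations and a single $r$-cycle survive the Laplace expansion. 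This gives
\begin{equation}
	\mc{E}^{(k)}_0(z;\emptyset;\hbar)=A_k\,\frac{dx^k}{x^k}\ \ (1\le k\le r-s),
	\qquad
	\mc{E}^{(k)}_0=0\ \ (r-s<k\le r-1),
	\qquad
	\mc{E}^{(r)}_0=\frac{(-1)^r}{\hbar^r}\,dx^r,
\end{equation}
with remainders identically zero, and only then does the proof of \cite[Theorem~3.10]{BBKN} apply, the anomalies producing the $(-1)^rA_{2g}\,dz_0/z_0^{2g}$ term exactly as you describe. Your projection-property input, disk resummation, and the observation that the anomaly enters $\omega_{g,1}$ only through $k=2g$ are all correct and match the paper; the gap is that you never establish loop equations strong enough for the residue argument, and the distinctness of $y$-values is only the superficial part of what $\gcd(r,s)=1$ buys.
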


\begin{proof}
	We can obtain a stronger version of the loop equations of \cref{thm:loop:eq} when $n = 0$ and $(r,s)$ are coprime. Indeed, notice that the right-hand side of \cref{eq:Easdet} is 
	\begin{equation}
		dx(z)^r \det\left( y \Id_{r\times r} + \frac{1}{\hbar} \mathcal{D} \right).
	\end{equation}
	In the Laplace expansion of the determinant, only the identity cycle and the $r$-cycle 
	\begin{equation}
		\left(1, (s+1), \cdots, \left\lfloor \frac{r-1}{s} \right\rfloor s + 1, 2, (s+2), \cdots, \left\lfloor \frac{r-2}{s} \right\rfloor s + 1, 3,\cdots, \left\lfloor \frac{r-s}{s} \right\rfloor s + 1 \right)
	\end{equation}
	give non-zero contributions. The latter cycle contributes the product of the upper $(s+1)$-th diagonal and the lower $(r-s+1)$-th diagonal. All together we get
	\begin{equation}
		y^s \prod_{i=1}^{r-s} \left( y + \frac{1}{x} \frac{2i + s -r -1}{2(r-s)} \right)
		+
		\frac{(-1)^{r-1}}{x^{r-s}}
		=
		\sum_{t=0}^{r-s} y^{s+t} A_{r-s-t}\frac{1}{x^{r-s-t}}
		+
		\frac{1}{\hbar^r} \frac{(-1)^{r-1}}{x^{r-s}}.
	\end{equation}
	Then, from \cref{eq:Easdet} we can read off the loop equation for $n=0$ as 
	\begin{equation}
		\mc{E}_0^{(k)}(z;\emptyset;\hbar)
		=
		\begin{cases}
			\delta_{n,0} A_k \frac{dx^k}{x^{k}} & 1 \leq k \leq r-s, \\[1ex]
			0& r-s < k \leq r-1, \\
			\frac{(-1)^r}{\hbar^r} dx^r & k=r.
		\end{cases}
	\end{equation}
	Expanding in $\hbar$ and applying the proof of~\cite[Theorem~3.10]{BBKN} to our setting yields the result. Note that the term $\frac{(-1)^r}{\hbar^r}$ vanishes in the computation, as it contributes only at order $x^0$.
\end{proof}

\section{\texorpdfstring{$\mathcal{W}$}{W}-constraints and Airy structures}
\label{sec:W:constraints}
We turn the loop equations for the correlators obtained in the previous section into $\mathcal{W}$-constraints for the descendant potential $Z^{r,s}$ of the $\Theta^{r,s}$-classes. We also include compare these $\mathcal{W}$-constraints with the $(r,s)$-Airy structures studied in \cite{BBCCN24}.

\subsection{\texorpdfstring{$\mathcal{W}$}{W}-constraints for the descendant potential}
In this section, we recast the loop equations derived in the previous section as a set of $\mathcal{W}$-constraints for the descendant potential.

\subsubsection{Twist-field representations}
We are interested in the principal $\mathcal{W}$-algebra of $\mathfrak{gl}_r$ at the shifted level $k + r = 1$ (known as the self-dual level), which we denote by $\mathcal{W}(\mathfrak{gl}_r)$ for notational simplicity. The algebra $\mathcal{W}(\mathfrak{gl}_r)$ is strongly and freely generated, as a vertex algebra, by $r$ fields denoted $W^i(z)$ for $i \in [r]$. The field $W^i(z)$ has conformal weight $i$.

We use the convention
\begin{equation}
	W^i(z) = \sum_{k \in \mathbb{Z}} W^i_k \, z^{-i-k}
\end{equation}
for the mode expansion of the generating fields $W^i(z)$, for any $i \in [r]$. The modes $W^i_k$ together with the commutator $[\, ,\, ]$ form a non-linear Lie algebra, and we denote by $\mathcal{U}_r$ its universal enveloping algebra. The modes $W^i_k$ form a PBW basis for $\mathcal{U}_r$.

There is a natural exhaustive ascending filtration on $\mathcal{U}_r$, given by conformal weight. We denote by $F_n \mathcal{U}_r$ the subspace of elements of conformal weight $\leq n$, and introduce a parameter $\hbar$ using the Rees construction:
\begin{equation}
	\mathcal{U}_r^\hbar \coloneqq \bigoplus_{n \geq 0} \hbar^n F_n \mathcal{U}_r.
\end{equation}
This endows $\mathcal{U}_r^\hbar$ with a graded algebra structure in powers of $\hbar$. We denote by $W^{\hbar,i}_k \coloneqq \hbar^i W^i_k$ the homogenization of the modes in $\mathcal{U}_r^\hbar$.
See, for instance,~\cite[Section~2.6.4]{Bou} or~\cite[Section~2.1.4]{BBKN} for more details.

To obtain differential constraints, we aim to construct a representation of $\mathcal{U}_r^\hbar$ in terms of differential operators. We begin by considering the Weyl algebra in the variables $\bm{x} = \set{x_i}_{i \in \mathbb{Z}_{>0}}$, that is, the algebra of differential operators in the variables $x_1,x_2,$ with polynomial coefficients. Since the number of variables is infinite, we take a suitable completion of the Weyl algebra, denoted $\mathcal{D}_{\mathbb{Z}_{>0}}$. This algebra is filtered, and we promote it to a graded algebra by introducing a parameter $\hbar$ as above. More precisely, we use the Bernstein filtration on $\mathcal{D}_{\mathbb{Z}_{>0}}$ to define the Rees--Weyl algebra
\begin{equation}
	\mathcal{D}^\hbar_{\mathbb{Z}_{>0}}
	\coloneqq
	\bigoplus_{n \in \mathbb{Z}_{\geq 0}} \hbar^n F_n \mathcal{D}_{\mathbb{Z}_{>0}}.
\end{equation}
See \cite[Definition~2.4]{Bou} for a precise definition (see also \cite{BCJ24,BBKN}). A typical element $P \in \mathcal{D}^\hbar_{\mathbb{Z}_{>0}}$ has the form
\begin{equation}
	P
	=
	\sum_{n \in \mathbb{Z}_{\geq 0}} \hbar^n
	\sum_{\substack{m,k \in \mathbb{Z}_{\geq 0} \\ m+k = n}}
	\sum_{a_1,\ldots,a_m \geq 1}
	p^{(n,k)}_{a_1,\ldots,a_m} \, \partial_{a_1} \cdots \partial_{a_m},
\end{equation} 
where $p^{(n,k)}_{a_1,\ldots,a_m}$ is a polynomial in the variables $\bm{x}$, $\partial_a$ denotes the operator $\frac{\partial}{\partial x_a}$, and only finitely many terms in the sum over $n$ are non-vanishing (i.e., the expression is polynomial in $\hbar$).

To formulate the $\mathcal{W}$-constraints for the descendant potential of the $\Theta^{r,s}$-classes, for each $(r,s)$ we construct a representation $\rho^s: \mathcal{U}_r^\hbar \to \mathcal{D}^\hbar_{\mathbb{Z}_{>0}}$ as follows. The vertex algebra $\mathcal{W}(\mathfrak{gl}_r)$ embeds into a Heisenberg algebra of rank $r$ via the quantum Miura transform, under which the generating field $W^i(z)$ for $i \in [r]$ is realized as the $i$-th elementary symmetric polynomial in the Heisenberg fields. In \cite{Mil16, BBCCN24}, certain representations of $\mathcal{W}(\mathfrak{gl}_r)$, referred to as twist-field representations, were constructed by restricting $\mathbb{Z}_r$-twisted Heisenberg representations to $\mathcal{W}(\mathfrak{gl}_r)$. Then, the twist-field representation of the vertex algebra $\mathcal{W}(\mathfrak{gl}_r)$, together with a further dilaton shift depending on an integer $s \geq 1$ as considered in~\cite[Section~4.1]{BBCCN24} (see also \cite[Section~2.2]{BBKN}), gives rise to an induced representation on $\mathcal{U}^\hbar_r$, which we denote by $\rho^s \colon \mathcal{U}^\hbar_r \to \mathcal{D}^\hbar_{\mathbb{Z}_{>0}}$. An explicit expression for the modes 
\begin{equation}
	H^i_{k} \coloneqq \rho^s(W^{\hbar,i}_k)
\end{equation}
is also derived in~\cite{BBCCN24}. Of course, the operators $H^i_k$ depend on both $r$ and $s$, but we omit this dependence for ease of notation.

Before stating this expression, we need to set up some notation. Let $\theta$ be a primitive $ r$-th root of unity. Consider the following sums over roots of unity. Given $r \geq 2$ and $i \in [r]$ and $0 \leq j \leq \lfloor \frac{i}{2} \rfloor$, define
\begin{equation}
	\Psi^{(j)}_r (a_{2j+1}, \dotsc, a_i) \coloneqq \frac{1}{i!} \sum_{\substack{m_1, \dotsc, m_i = 0\\ m_l \neq m_k}}^{r-1} \prod_{k = 1}^j \frac{\theta^{m_{2k-1} + m_{2k}}}{(\theta^{m_{2k-1}} - \theta^{m_{2k}})^2} \prod_{l=2j+1}^i \theta^{-m_l a_l} \, .
\end{equation}
We also define, for any $m \in \mathbb Z$,
\begin{equation}
	J_m
	\coloneqq
	\begin{cases}
		 \partial_{m} & m \geq 1, \\
		0 & m = 0, \\
		(-m) x_{-m} - \frac{1}{\hbar} \delta_{m,-s} & m \leq - 1.
	\end{cases}
\end{equation}
Then, we have the following explicit expression of the modes of $\mathcal{W}(\mathfrak{gl}_r)$ in the representation $\rho^s$.

\begin{lemma}[{\cite[Corollary~4.7]{BBCCN24}}] \label{l:Wrep}
	Given $r \geq 2$ and $s \geq 1$, the operators $H^i_k = \rho^s(W^{\hbar,i}_k)$ take the form
	\begin{equation}\label{eq:Wmodes}
		H^{i}_k
		=
		\left(\frac{\hbar}{r}\right)^i
		\sum_{j=0}^{\lfloor \frac{i}{2} \rfloor}
			\frac{i!}{2^j j! (i-2j)!}
			\sum_{\substack{p_{2j+1}, \cdots p_i \in \Z \\ \sum p_l = rk}}
				\Psi^{(j)}_r (p_{2j+1}, \dotsc, p_i)
				:\mathrel{ \prod_{l=2j+1}^i J_{p_l} }: \,,
	\end{equation}
	where, for cases such that $j=i/2$, the condition $\sum p_l = rk$ is understood as $\delta_{k,0}$.
\end{lemma}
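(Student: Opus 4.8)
The statement is established as \cite[Corollary~4.7]{BBCCN24}, so for the present paper it suffices to invoke that computation; the plan below outlines how one reconstructs it from the two ingredients defining $\rho^s$, namely the quantum Miura transform realizing $\mathcal{W}(\mathfrak{gl}_r)$ inside a rank-$r$ Heisenberg vertex algebra, and the $\mathbb{Z}_r$-twisted representation together with the $s$-dependent dilaton shift.

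First I would recall the quantum Miura transform at the self-dual level $k+r=1$, which realizes the generating field $W^i(z)$ as the $i$-th elementary symmetric polynomial $e_i$ in the $r$ free Heisenberg fields $J^1(z),\ldots,J^r(z)$, with normal ordering and derivative corrections fixed by the level. Since $W^i(z)$ has conformal weight $i$, this exhibits each generator as a normal-ordered polynomial of degree $i$ in the bosonic modes, from which the modes $W^i_k$ are read off as the coefficient of $z^{-i-k}$.

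Next I would pass to the twist-field representation, following \cite[Section~4.1]{BBCCN24}. The $\mathbb{Z}_r$-twist acts by cyclically permuting the $r$ bosons, and diagonalizing this action through the primitive $r$-th root of unity $\theta$ replaces the collection $\{J^a\}$ by a single Heisenberg field whose integer-indexed modes are precisely the $J_m$ of the statement; the dilaton shift is implemented as the constant term $-\frac{1}{\hbar}\delta_{m,-s}$ in $J_{-s}$. Substituting the diagonalized fields into $e_i$ and normal-ordering \emph{in the twisted sector} is the crux of the argument: each pair of bosons that is contracted contributes the twisted two-point factor $\frac{\theta^{m_{2k-1}+m_{2k}}}{(\theta^{m_{2k-1}}-\theta^{m_{2k}})^2}$, so that a term with $j$ contractions and $i-2j$ surviving fields is weighted exactly by $\Psi^{(j)}_r$. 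The combinatorial prefactor $\frac{i!}{2^j j!(i-2j)!}$ counts the choices of which fields are paired, with $2^j j!$ compensating the overcounting of the $j$ unordered pairs. Matching conformal weights forces the grading constraint $\sum_l p_l = rk$, which degenerates to $\delta_{k,0}$ in the fully contracted case $j=i/2$; finally, the Rees homogenization $W^{\hbar,i}_k = \hbar^i W^i_k$ together with the $1/r$ normalization of the discrete Fourier transform produces the overall prefactor $(\hbar/r)^i$.

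The main obstacle is this normal-ordering step in the twisted sector: correctly tracking the root-of-unity sums generated by each contraction, verifying that the self-dual-level derivative corrections in the Miura transform do not modify the stated coefficients, and reconciling the symmetry factors. This bookkeeping is exactly the content of the computation carried out in \cite{BBCCN24}, to which we refer for the full details.
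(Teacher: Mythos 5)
Your proposal takes essentially the same approach as the paper: the paper offers no proof of this lemma, simply citing \cite[Corollary~4.7]{BBCCN24} and following the statement with a remark reconciling conventions (the dilaton shift absorbed into the definition of $J_m$, the overall normalization differing by $r^{1-i}$, and the $\hbar$-rescaling). Your additional sketch of the quantum Miura transform, the $\mathbb{Z}_r$-twisted normal ordering producing the $\Psi^{(j)}_r$ contraction factors, and the pairing combinatorics is a faithful outline of the computation carried out in that reference, so invoking it as you do is exactly what the paper does.
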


\begin{remark}
	Note that we have incorporated the dilaton shift $J_{-s} \mapsto J_{-s} - \frac{1}{\hbar}$ directly into the definition of the Heisenberg modes $J_m$, in contrast to the convention used in \cite{BBCCN24} (see also \cite[Section~2.2.5]{BBKN}). The overall normalization of $H^i_k$ adopted here is more suitable for our purposes and agrees with \cite{BBKN}, but differs from that of \cite{BBCCN24} by a factor of $r^{1-i}$. The $\hbar$-convention in this paper also differs from \cite{BBCCN24}, and aligns with the $\hbar^{2g-2+n}$ convention used in \eqref{eq:tau} (and followed in \cite{BCJ24,BBKN,Bou}). To obtain our $\hbar$-convention from that in \cite{BBCCN24}, the reader should first replace $\hbar$ with $\hbar^2$, and then rescale $x_m \to \hbar x_m$ for all $m \geq 1$ in the formulas of \cite{BBCCN24}.
\end{remark}

\subsubsection{\texorpdfstring{$\mathcal{W}$}{W}-constraints}
With this construction we are ready to recast the loop equations as $\mathcal{W}$-constraints. Consider the descendant potential of the $\Theta^{r,s}$-classes (this is the $r$-KdV tau function from \cref{thm:int}):
\begin{equation}\label{eq:Z}
	Z^{r,s}(\bm{x};\hbar)
	\coloneqq
	\exp \left(
	\sum_{\substack{g \geq 0, \, n \geq 1 \\ 2g-2+n>0}} \frac{\hbar^{2g-2+n}}{n!}
	\sum_{\substack{k_1,\ldots,k_n \geq 0 \\ a_1,\ldots,a_n \in [r-1]}}
	\int_{\Mbar_{g,n}}
	\Theta^{r,s}_{g,n}(a)
	\prod_{i=1}^n \psi_i^{k_i} (rk_i+a_i)!^{(r)} x_{rk_i + a_i}
	\right). 
\end{equation}
This function $Z^{r,s}$ satisfies the following $\mathcal{W}$-constraints.
 
\begin{theorem}[{$\mathcal{W}$-constraints}]\label{thm:W:const}
	Consider the modes $H^i_k$ of $\mathcal{W}(\mathfrak{gl}_r)$ in the representation \eqref{eq:Wmodes}. Then, for any $ r \geq 2 $ and $s \in [r-1]$, we have 
	\begin{equation}\label{eq:Wconst}
		H^i_k Z^{r,s} = \begin{cases}
		 \hbar^i A_i \delta_{k,0} Z^{r,s} & i \in [r-s] \, , k \geq 0 \\
		 0 & r-s +1\leq i \leq r\, , k \geq r-s-i+1,
		\end{cases}
	\end{equation} 
	where $A_i$ was defined in \eqref{eq:Akdef} (recall that $A_i$ vanishes when $i$ is odd).
\end{theorem}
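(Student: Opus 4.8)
The plan is to translate the loop equations of \cref{thm:loop:eq} into the claimed $\mathcal{W}$-constraints by identifying the generating function of the semi-connected correlators $\mathcal{E}^{(i)}_n$ with the action of the modes $H^i_k$ on the descendant potential $Z^{r,s}$. The bridge is the characteristic-polynomial equation \eqref{eq:Easdet} together with the quantum Miura transform: the determinant $\det\bigl(y\,dx\,\Id - \mathcal{A}_n\bigr)$ is exactly the generating current of $\mathcal{W}(\mathfrak{gl}_r)$, whose coefficient of $(y\,dx)^{r-i}$ is the field $W^i(z)$ realized through the twist-field representation $\rho^s$ of \cref{l:Wrep}. Thus, at the level of generating functions, the correlator $\mathcal{E}^{(i)}(z)$ \emph{is} the realization of $W^i(z)$ acting on $Z^{r,s}$.

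First I would fix the dictionary between the correlators and the partition function. Expanding $\omega_n(z_{[n]};\hbar)$ from \eqref{eq:Theta:rs:omega} at $z=\infty$ in the basis $d\xi_{k,a}$ and pairing each variable against the time $x_{rk+a}$ reproduces $Z^{r,s}$ of \eqref{eq:Z}. Under this pairing the positive Heisenberg modes $J_m=\partial_m$ act as insertions, whereas the negative modes $J_{-m}=m x_m-\tfrac1\hbar\delta_{m,s}$ act by multiplication together with the dilaton shift that encodes the disk $\omega_{0,1}=y\,dx=r z^{s-1}\,dz$ — this is the origin of the $\delta_{m,s}$. Promoting the auxiliary markers $\epsilon_j$ in $\mathcal{A}_n$ to the full collection of times then turns the $[\epsilon_1\cdots\epsilon_n]$-coefficient in \eqref{eq:Easdet}, summed over $n$, into $W^i(z)$ applied to $Z^{r,s}$ inside the Rees--Weyl algebra.

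Next I would match the expansions mode by mode. The differential $\mathcal{E}^{(i)}(z)$ is the pullback of an $i$-differential in $x=z^r$, so its Laurent expansion at $x=0$ has a well-defined pole order; under the $\mathbb{Z}_r$-twist relating $z$- and $x$-expansions, the mode $H^i_k$ is attached to the coefficient of $\tfrac{dx^i}{x^{i+k}}$. The loop equations of \cref{thm:loop:eq} then read off immediately. For $i\in[r-s]$ the most singular term is $A_i\,\tfrac{dx^i}{x^i}$ and carries no insertions, so $H^i_0 Z^{r,s}=\hbar^i A_i Z^{r,s}$ while $H^i_k Z^{r,s}=0$ for all $k\ge1$; for $r-s+1\le i\le r$ the pole order is bounded by $r-s$, so $H^i_k Z^{r,s}=0$ precisely when $i+k>r-s$, that is for $k\ge r-s-i+1$. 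The constants $A_i$ are carried over verbatim from \eqref{eq:Akdef}, and the prefactor $\hbar^i$ is supplied by the Rees grading $W^{\hbar,i}_k=\hbar^i W^i_k$.

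The main obstacle is the operator identity underlying the matching: one must verify that, after applying $\rho^s$, the $[\epsilon_1\cdots\epsilon_n]$-coefficient of \eqref{eq:Easdet} reproduces $W^i(z)$ acting on $Z^{r,s}$ with the exact normalization. This amounts to reconciling the principal-minor expansion of $\Psi^{-1}\mathcal{A}_n\Psi$ with the symmetric-function structure of the Miura transform, including the sums over $r$-th roots of unity encoded in the coefficients $\Psi^{(j)}_r$, the $\hbar$-grading through the Rees--Weyl construction, and the dilaton shift $J_{-s}\mapsto J_{-s}-\tfrac1\hbar$. In effect one must re-derive, in the present determinantal setting, the computation of \cite{BBCCN24} that produced \cref{l:Wrep}. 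Once these normalizations are pinned down, the pole bounds of \cref{thm:loop:eq} translate mechanically into the mode conditions, yielding \eqref{eq:Wconst}.
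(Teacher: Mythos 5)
Your proposal follows the same route as the paper's own proof: translate the pole bounds of \cref{thm:loop:eq} into mode conditions by identifying the generating current $\sum_k H^i_k\,\frac{dx^i}{x^{i+k}}$, conjugated by $Z^{r,s}$, with the correlators $\mathcal{E}^{(i)}_n$, and then read off that $H^i_k Z^{r,s}=0$ for $k\geq 1$ (resp.\ $k \geq r-s-i+1$) with the $\delta_{n,0}$ eigenvalue $\hbar^i A_i$ at $k=0$. The one step you flag as the ``main obstacle''---reconciling the characteristic polynomial of $\mathcal{A}_n$ with the twist-field realization $\rho^s$ of $W^i(z)$---is exactly the step the paper discharges by citing \cite[Sections~4.3--4.4]{BKS24} (building on \cite{BBCCN24}) for the identity $\operatorname{ad}_n \mathcal{H}^i = \mathcal{E}^{(i)}_n$, so your sketch and the paper's differ only in whether that identification is cited or re-derived.
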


\begin{proof}
	The proof essentially follows from the techniques used in \cite{BBCCN24} to study the $(r,s)$ Airy structures (and further developed in \cite{BKS24, BBCC24,BBKN}). We give a sketch of the proof here for the reader's convenience. Consider the combination
	\begin{equation} \label{eq:ZHZ}
		\mathcal{H}^i(z; \bm{x} ;\hbar)
		\coloneqq
		Z^{r,s} (\bm{x};\hbar)^{-1} \left( \sum_{k \in \mathbb Z} H^i_k \frac{dx^i}{x^{i+k}} \right) Z^{r,s} (\bm{x}; \hbar),
	\end{equation}
	where $x=z^r$.
	Also consider the operator $\operatorname{ad}_n$ which picks terms that are homogeneous of degree $n$ in the variables $x_{\alpha_1},\ldots, x_{\alpha_n}$, and performs the substitution
	\begin{equation}
		x_{\alpha_i} \longmapsto \frac{dz_i}{z_i^{\alpha_i+1}}.
	\end{equation}
	Then the result of \cite[Sections~4.3--4.4]{BKS24} states that
	\begin{equation}
		\operatorname{ad}_n \mathcal{H}^i(z; \bm{x} ;\hbar) = \mathcal{E}^{(i)}_n(z; z_{[n]} ;\hbar).
	\end{equation}
	Now, the loop equations of \cref{thm:loop:eq} state that for all $i \in [r-s]$
	\begin{equation}
		\operatorname{ad}_n \mathcal{H}^i(z; \bm{x} ;\hbar)
		=
		\delta_{n,0} A_i \frac{dx^i}{x^i} + \bigO\biggl( \frac{dx^i}{x^{i-1}} \biggr).
	\end{equation} 
	As there are no terms of order $x^{-i-k}$ with $ k >0$, we see from \cref{eq:ZHZ} that $H^i_k$ for $i \in [r-s]$ must annihilate $Z^{r,s}$ for all $k > 0$. As for $ k =0$, the same logic gives $H^i_0 Z^{r,s} = \hbar^i A_i Z^{r,s} $. The case of $r-s <i \leq r$ can be treated similarly using the corresponding loop equations in \cref{thm:loop:eq}.
\end{proof}

What we have shown is that the descendant potential of the $\Theta^{r,s}$-class satisfies $\mathcal{W}$-constraints, for all $r \geq 2$ and $s \in [r-1]$. It is interesting to ask further whether the $\mathcal{W}$-constraints uniquely fix the descendant potential. That is, for a given $(r,s)$, do the differential constraints of \cref{thm:W:const} have a unique solution of the form
\begin{equation}
	Z =
	\exp \left(
			\sum_{\substack{g \geq 0, \, n \geq 1 \\ 2g-2+n>0}} \frac{\hbar^{2g-2+n}}{n!}
			F_{g,n}
		\right),
\end{equation} 
for some homogeneous polynomials $F_{g,n}$ of degree $n$ in the variables $\bm{x}$? This is a question that can be answered within the framework of Airy structures. As we now show, the answer is yes only when $s=1$ and $s=r-1$.

\subsection{\texorpdfstring{$\mathcal{W}$}{W}-algebras and Airy structures}
To address the uniqueness question, we recall from \cite{BBCCN24} (see also \cite{BKS24,BBKN}) the construction of the $(r,s)$ Airy structures.

\subsubsection{Airy structures}
Let us first recall the definition of Airy structures. These were introduced in \cite{KS18} as an algebraic reformulation (and generalization) of the Eynard--Orantin topological recursion \cite{EO07}. We provide only a brief overview here and refer the reader to the lecture notes \cite{Bou} and the papers \cite{KS18,ABCO24,BBCCN24,BCJ24,BBKN} for further details.

Let $A$ be a finite or countably infinite index set, and let $\mathcal{D}_A$ denote the Weyl algebra in the variables \smash{$\set{ x_a }_{a \in A}$}. Let $\mathcal{D}^\hbar_A$ be the Rees--Weyl algebra associated with the Bernstein filtration, as in the previous section. Airy structures (or Airy ideals) are particular left ideals in $\mathcal{D}^\hbar_A$.

\begin{definition}\label{d:airy}
	A left ideal $\mathcal{J} \subset \mathcal{D}^\hbar_A$ is called an \emph{Airy structure} (or \emph{Airy ideal}) if there exists a bounded\footnote{
		See \cite[Definition~2.15]{BCJ24} or \cite[Definition~2.6]{Bou} for the definition of a set of bounded operators.
	} generating set \smash{$\set{ H_a }_{a\in A}$} for $\mathcal{J}$ such that:
	\begin{enumerate}
		\item\label{airy:hbar}
		The operators $H_a$ take the form 
		\begin{equation}
			H_a = \hbar \partial_a + \bigO(\hbar^2).
		\end{equation}

		\item
		The left ideal $\mathcal{J}$ satisfies $[\mathcal{J}, \mathcal{J}] \subseteq \hbar^2 \mathcal{J}$.
	\end{enumerate}
\end{definition}

The principal motivation for studying Airy structures is the following foundational theorem from \cite{KS18}.

\begin{theorem}[{\cite{KS18}}]
	Let $\mathcal{J} \subset \mathcal{D}^\hbar_A$ be an Airy structure. Then there exists a unique function $Z$ of the form 
	\begin{equation}
		Z
		=
		\exp \left(
			\sum_{\substack{g \in \frac{1}{2} \mathbb{Z}_{\geq 0}, \, n \in \mathbb{Z}_{>0} \\ 2g-2+n>0}}
				\frac{\hbar^{2g-2+n}}{n!}
				F_{g,n}
		\right),
	\end{equation}
	where each $F_{g,n}$ is a homogeneous polynomial of degree $n$ in the variables \smash{$\set{ x_a }_{a \in A}$}, such that $\mathcal{J}$ is the annihilator ideal of $Z$ in $\mathcal{D}^\hbar_A$. That is, $Z$ is the unique solution to the differential constraints
	\begin{equation}
		H_a Z = 0
		\qquad
		\forall a \in A,
	\end{equation}
	of the above form. The function $Z$ is called the \emph{partition function} of the Airy structure $\mathcal{J}$.
\end{theorem}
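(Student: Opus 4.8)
The plan is to convert the differential constraints $H_a Z = 0$ into a recursion for the coefficients $F_{g,n}$, organised by the quantity $\chi \coloneqq 2g-2+n$, and to solve that recursion by induction on $\chi$. First I would substitute $Z = \exp(F)$ with $F = \sum_{2g-2+n>0} \frac{\hbar^{2g-2+n}}{n!} F_{g,n}$ and rewrite each constraint as $Z^{-1} H_a Z \cdot 1 = 0$, where the operator is first conjugated by $Z$ and then applied to the constant function. Since $Z^{-1} \partial_c Z \cdot 1 = \partial_c F$, conjugation turns each monomial $\partial_{c_1} \cdots \partial_{c_q}$ appearing in $H_a$ into a sum, over set partitions of $\set{c_1,\dots,c_q}$, of products of partial derivatives $\partial^B F$; hence $Z^{-1} H_a Z \cdot 1$ becomes a polynomial expression in the derivatives of $F$. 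The normalization $H_a = \hbar \partial_a + \bigO(\hbar^2)$ of \cref{d:airy} is exactly what drives the recursion: the leading term contributes $\hbar\, \partial_a F$, while the remainder $P_a \coloneqq H_a - \hbar \partial_a$ carries a prefactor $\hbar^m$ with $m \ge 2$.

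Next I would extract from $Z^{-1} H_a Z \cdot 1 = 0$ the coefficient of $\hbar^{\chi+1}$. The term $\hbar\,\partial_a F$ isolates $\partial_a F_{g,n}$ with $\chi = 2g-2+n$, whereas every other contribution is a product $\hbar^m \prod_i \partial^{B_i} F_{g_i,n_i}$ with $m \ge 2$ and each factor of $\hbar$-weight $\chi_i = 2g_i - 2 + n_i \ge 1$ (the minimal stable complexity being $1$). Matching $\hbar$-weights forces $m + \sum_i \chi_i = \chi + 1$, so that $\sum_i \chi_i \le \chi - 1$ and in particular every $\chi_i < \chi$. Thus the equation takes the form
\begin{equation}
	\partial_a F_{g,n}
	=
	\bigl(\text{a polynomial in the } \partial^{B} F_{g',n'} \text{ with } 2g'-2+n' < \chi\bigr),
\end{equation}
a recursion that is triangular in $\chi$. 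Boundedness of the generating set guarantees that only finitely many terms contribute at each $\chi$, so each right-hand side is a well-defined polynomial, and a degree count (again using boundedness) shows it is homogeneous of degree $n-1$, consistent with $F_{g,n}$ having degree $n$. The base case $\chi = 1$, where no $\partial^B F$ factors can occur, is fed purely by the derivative-free part of $P_a$ at order $\hbar^2$.

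The recursion produces, for each index $a$, a candidate polynomial for $\partial_a F_{g,n}$, and the main obstacle is to show these are the components of the gradient of a single $F_{g,n}$. By the polynomial Poincaré lemma this reduces to the symmetry $\partial_b (\partial_a F_{g,n}) = \partial_a (\partial_b F_{g,n})$, which I would extract from the second defining property $[\mathcal{J}, \mathcal{J}] \subseteq \hbar^2 \mathcal{J}$: because $[H_a, H_b] \in \mathcal{J}$, it annihilates $Z$, and expanding $Z^{-1} [H_a, H_b] Z \cdot 1 = 0$ order by order in $\chi$ yields precisely the required cross-derivative consistency, with the factor $\hbar^2$ ensuring the identity lands at the correct order so that the induction closes. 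This integrability step, where the full Airy-structure hypotheses are genuinely used, is the crux of the argument. Granting it, each $F_{g,n}$ is recovered by integration and, being homogeneous of degree $n \ge 1$, is determined with no additive constant; this gives simultaneously the existence and the uniqueness of $Z$ in the prescribed form, and identifies $\mathcal{J}$ with its annihilator ideal in $\mathcal{D}^\hbar_A$.
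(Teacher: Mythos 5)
The paper does not actually prove this theorem --- it is imported verbatim from \cite{KS18} (with refinements in \cite{ABCO24,BCJ24,Bou}) --- and your proposal reconstructs precisely the strategy of those proofs: induction on $\chi = 2g-2+n$, with the leading term $\hbar\partial_a$ of $H_a$ producing a triangular recursion for $\partial_a F_{g,n}$, separation of the pairs $(g,n)$ with equal $\chi$ by polynomial degree, uniqueness from homogeneity of degree $n \geq 1$, and the condition $[\mathcal{J},\mathcal{J}] \subseteq \hbar^2 \mathcal{J}$ supplying the cross-derivative consistency needed to integrate. The weight bookkeeping ($m + \sum_i \chi_i = \chi+1$ with $m \geq 2$ and $\chi_i \geq 1$, so every $\chi_i < \chi$) and the base case $\chi = 1$ are correct.

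There is, however, one step that fails as literally written: in the existence induction you justify the symmetry $\partial_b(\partial_a F_{g,n}) = \partial_a(\partial_b F_{g,n})$ by saying that $[H_a,H_b] \in \mathcal{J}$ ``annihilates $Z$'' --- but $Z$ is exactly the object being constructed, so this is circular. The repair is to run the argument on truncations: let $F_{<\chi}$ collect the already-constructed levels, set $Z_{<\chi} \coloneqq \exp(F_{<\chi})$, and write $\epsilon_c \coloneqq H_c Z_{<\chi} = Z_{<\chi}\bigl(-\hbar^{\chi+1} R_c + \bigO(\hbar^{\chi+2})\bigr)$, where $R_c$ is your candidate for $\partial_c F$ at level $\chi$. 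A direct computation (using $\partial_a F_{<\chi} = \bigO(\hbar)$ and that the non-leading part of $H_a$ is $\bigO(\hbar^2)$) gives
\begin{equation}
	[H_a,H_b]\, Z_{<\chi}
	=
	H_a \epsilon_b - H_b \epsilon_a
	=
	Z_{<\chi}\, \hbar^{\chi+2}\bigl( \partial_b R_a - \partial_a R_b \bigr) + \bigO(\hbar^{\chi+3}),
\end{equation}
while writing $[H_a,H_b] = \hbar^2 \sum_c f^c_{ab} H_c$ yields $[H_a,H_b]\,Z_{<\chi} = \hbar^2 \sum_c f^c_{ab}\epsilon_c = \bigO(\hbar^{\chi+3})$; comparing orders forces $\partial_b R_a = \partial_a R_b$. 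This is evidently what you intend by ``the factor $\hbar^2$ ensuring the identity lands at the correct order,'' but the consistency must be extracted from the commutator relation applied to $Z_{<\chi}$, never from the as-yet-nonexistent $Z$. A second, more minor point: your argument establishes $\mathcal{J} \subseteq \mathrm{Ann}(Z)$ and the uniqueness of $Z$, which is all the paper ever uses, but the clause that $\mathcal{J}$ \emph{equals} the annihilator ideal requires a separate division/normal-form argument modulo $\mathcal{J}$ (this is the content of the Airy-ideal formalism of \cite{BCJ24}) and is not covered by your proof.
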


\subsubsection{The $(r,s)$ Airy structures}
The notion of Airy structures is relevant because, if we can show that the constraints from \cref{thm:W:const} form an Airy structure, then we can conclude that they uniquely determine the descendant potential. If they do not form an Airy structure, then a more detailed analysis of the constraints is required.

An interesting class of Airy structures to compare with was constructed in \cite{BBCCN24}, and will be referred to as the \emph{$(r,s)$ Airy structures}; see also \cite[Section~2.2]{BBKN}. These are based on the representation of $\mathcal{W}(\mathfrak{gl}_r)$ at self-dual level constructed in \cref{l:Wrep}.

\begin{theorem}[{\cite[Theorem~4.9]{BBCCN24}}]\label{t:airy}
	Let $r \geq 2$ and $s \in [r+1]$ such that $r \equiv \pm 1 \pmod{s}$. Consider the representation and the associated operators
	\begin{equation}
		\rho^s\colon \mathcal{U}^\hbar_r \longrightarrow \mathcal{D}^\hbar_{\mathbb{Z}_{>0}},
		\qquad\qquad
		H^i_k = \rho^s(W^{\hbar,i}_k)
	\end{equation}
	as in \cref{l:Wrep} and \cref{eq:Wmodes}. Let $\mathcal{J} \subset \mathcal{D}^\hbar_{\mathbb{Z}_{>0}}$ be the left ideal generated by the operators
	\begin{equation}\label{eq:rsmodes}
		\Set{
			H^i_k | i \in [r], \ k \geq -\left\lfloor \frac{s(i-1)}{r} \right\rfloor
		}.
	\end{equation}
	Then $\mathcal{J}$ is an Airy structure, which we call the \emph{$(r,s)$ Airy structure}.
\end{theorem}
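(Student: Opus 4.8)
The plan is to verify the two defining properties of an Airy structure from \cref{d:airy} for the left ideal $\mathcal{J}$ generated by the modes in \eqref{eq:rsmodes}, using the explicit formula for $H^i_k$ in \cref{l:Wrep}. The modular hypothesis $r \equiv \pm 1 \pmod{s}$ enters in two distinct ways, and it is worth separating them at the outset: it already forces $\gcd(r,s) = 1$ (writing $r = ms \pm 1$, any common divisor of $r$ and $s$ divides $\pm 1$), and coprimality alone will suffice for the normalization property, whereas the sharper congruence is needed only for the ideal property.

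First I would establish that, after a triangular change of generators, each $H^i_k$ in the range \eqref{eq:rsmodes} has the normal form $\hbar \partial_{a} + \bigO(\hbar^2)$. The mechanism is the dilaton shift built into the Heisenberg modes $J_m$: in a normally ordered monomial $:\!\prod_{l} J_{p_l}\!:$ with $\sum_l p_l = rk$, replacing all but one factor by the constant $-\tfrac{1}{\hbar}\delta_{p_l,-s}$ leaves a single surviving derivative $J_{p} = \partial_{p}$, and this is the unique source of a term linear in the $J$'s at lowest order in $\hbar$. Tracking the prefactor $(\hbar/r)^i$ against the powers of $1/\hbar$ produced by the collapsed factors, one finds that the surviving linear term sits at order $\hbar^1$ and equals a nonzero multiple of $\partial_{a(i,k)}$ for a definite index $a(i,k)$. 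The combinatorial content of this step is that the assignment $(i,k) \mapsto a(i,k)$, governed by the quantity $rk + s(i-1)$, is a bijection from the index set \eqref{eq:rsmodes} onto $\mathbb{Z}_{>0}$; here $\gcd(r,s) = 1$ guarantees that $\set{ s(i-1) \bmod r \mid i \in [r] }$ exhausts $\set{0,\ldots,r-1}$, so the leading derivatives cover the variables without repetition. Inverting the resulting system of leading terms, triangular with respect to the $\hbar$-filtration, puts the generators in the desired form, and boundedness of the generating set is immediate from the finiteness of the sums in \eqref{eq:Wmodes}.

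Next I would verify the ideal condition $[\mathcal{J},\mathcal{J}] \subseteq \hbar^2 \mathcal{J}$. The factor $\hbar^2$ is automatic: in the Rees--Weyl algebra $\mathcal{D}^\hbar_{\mathbb{Z}_{>0}}$ a bracket pairs a derivative against a variable and so drops two units of Bernstein filtration, giving $[\mathcal{D}^\hbar_{\mathbb{Z}_{>0}},\mathcal{D}^\hbar_{\mathbb{Z}_{>0}}] \subseteq \hbar^2\mathcal{D}^\hbar_{\mathbb{Z}_{>0}}$. The real content is therefore the containment of $[\mathcal{J},\mathcal{J}]$ in the left ideal $\mathcal{J}$. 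Since $\rho^s$ is a representation of $\mathcal{U}_r^\hbar$, we have $[H^i_k, H^j_l] = \rho^s([W^{\hbar,i}_k, W^{\hbar,j}_l])$, and the vertex-algebra structure of $\mathcal{W}(\mathfrak{gl}_r)$ expresses the right-hand side as a normally ordered (generally nonlinear) polynomial in the modes $W^{\hbar,m}_n$. It then suffices to show that each normally ordered monomial so produced lies in $\mathcal{J}$, for which it is enough that, after normal ordering places the annihilation-type factors on the right, one of its factors $H^m_n$ satisfy the admissibility bound $n \geq -\lfloor s(m-1)/r \rfloor$. Because the mode index is additive (the resulting modes sit at index $k+l$), this reduces to an arithmetic comparison of $rk+s(i-1)$ and $rl+s(j-1)$ with the analogous quantities for the weight-$\leq i+j-1$ modes produced by the operator product expansion.

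The hard part will be precisely this arithmetic closure. The function $i \mapsto \lfloor s(i-1)/r \rfloor$ is not additive, so the inequality required to keep a factor of every commutator term inside the admissible range can fail for generic coprime $(r,s)$; it is exactly here that the stronger hypothesis $r \equiv \pm 1 \pmod{s}$ becomes indispensable. I expect the argument to proceed by a case analysis on the two congruences $r \equiv 1$ and $r \equiv -1 \pmod{s}$, in each of which the residues $s(i-1) \bmod r$ admit a uniform description controlling the relevant fractional parts well enough to force the needed subadditivity, matching the sharpness of the boundary $k = -\lfloor s(i-1)/r\rfloor$ found in the first step. Once both properties are in hand, \cref{d:airy} applies and $\mathcal{J}$ is an Airy structure.
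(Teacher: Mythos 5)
\Cref{t:airy} is not proved in this paper at all: it is imported verbatim from \cite[Theorem~4.9]{BBCCN24}, and the text surrounding it contains no argument. So the only meaningful benchmark for your attempt is the original proof in \cite{BBCCN24}. Measured against that, your outline does reproduce the correct architecture: the dilaton shift hidden in $J_{-s}$ is indeed the unique source of order-$\hbar$ linear terms, the index map $(i,k) \mapsto rk+s(i-1)$ is what matches generators to variables, coprimality (which, as you say, follows from $r \equiv \pm 1 \pmod{s}$) is what is needed there, and the congruence itself is only needed for the ideal condition; the observation that the prefactor $\hbar^2$ in $[\mathcal{J},\mathcal{J}] \subseteq \hbar^2 \mathcal{J}$ is automatic from the Bernstein filtration is also correct. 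Two slips in this first part: the map is not quite a bijection onto $\mathbb{Z}_{>0}$, since $(i,k)=(1,0)$ lies in \eqref{eq:rsmodes} and maps to $0$ (the operator $H^1_0 = \hbar J_0$ is degenerate and has to be discarded or treated separately); and no triangular inversion is needed, since each generator carries a single order-$\hbar$ term $\propto \hbar\,\partial_{rk+s(i-1)}$, so a rescaling suffices. (Also, your justification of boundedness is off: the sums in \eqref{eq:Wmodes} are infinite, e.g.\ the quadratic Virasoro-type terms; boundedness holds, but not ``by finiteness''.)

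The genuine gap is exactly where you flag it, and it is not a detail: the closure of the admissible modes under commutation \emph{is} the theorem, and your reduction of it is too naive in two respects. First, ``the mode index is additive'' describes only the terms of $[W^{\hbar,i}_k,W^{\hbar,j}_l]$ that are linear in the generators; at self-dual level the $\mathcal{W}(\mathfrak{gl}_r)$ OPEs are nonlinear, so the commutator also contains normally ordered products $:\!W^{m_1}_{p_1}\cdots W^{m_t}_{p_t}\!:$ in which the total index $k+l$ (with weight-dependent shifts) is \emph{distributed} over several factors; one must show that for every splitting $(p_1,\ldots,p_t)$, after normal ordering, the rightmost factor is admissible, which is a genuinely harder family of inequalities than the single comparison you describe. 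Second, even granting that, ``one of its factors is admissible'' does not place a monomial in the \emph{left} ideal: you need the admissible factor on the right, and commuting an interior admissible factor to the right produces new terms that must themselves be controlled, forcing an induction on conformal weight/filtration. In \cite{BBCCN24} precisely this bookkeeping, together with the floor-function subadditivity lemmas that hold under $r \equiv \pm 1 \pmod{s}$ (and provably fail for general coprime $(r,s)$ --- which is why the Bouchard--Eynard correlators then fail to be symmetric), occupies the bulk of the proof of Theorem~4.9 and its supporting appendix. Without that step carried out, your proposal is a correct plan with the essential ingredient missing, not a proof.
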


\begin{remark}[Admissibility]
	Note that these Airy structures are defined only when the admissibility condition $r \equiv \pm 1 \pmod{s}$ is satisfied. Moreover, \cite{BBCCN24} proves that the $F_{g,n}$ of the associated partition function reconstruct the correlators $\omega_{g,n}$ computed by the Bouchard--Eynard topological recursion~\cite{BE13,BHLMR14} on the $(r,s)$ spectral curves $x = z^r$, $y = z^{s-r}$. In fact, \cite{BBCCN24} also proves that the Bouchard--Eynard topological recursion is well-defined—meaning that the correlators $\omega_{g,n}$ produced by the recursive formulas are symmetric—if and only if the admissibility condition $r \equiv \pm 1 \pmod{s}$ is satisfied.
\end{remark}

\subsubsection{The shifted $(r,s)$ Airy structures}
A slightly larger class of Airy structures can be obtained as representations of $\mathcal{W}(\mathfrak{gl}_r)$ at self-dual level; those were constructed in~\cite[Section~2.3]{BBKN}, and will be referred to as \emph{shifted $(r,s)$ Airy structures}.

The idea is simple: we start with one of the $(r,s)$ Airy structures, and we introduce ``highest weights'', that is, we shift the zero modes $H^i_0$ by terms of the form $\sum_{n=2}^\infty \hbar^n S_{i,n}$ for some constants\footnote{
	To be precise, we could start at $n=1$ here as in~\cite{BBKN}, but to do this we would need to extend the definition of Airy structures slightly to allow $\bigO(\hbar)$ terms in the generators $H_a$, as we do in~\cite[Definition~2.3]{BBKN}. As this is not needed in this paper, we avoid this unnecessary complication.
} $S_{i,n} \in \mathbb{C}$. However, after doing this, we need to make sure that the conditions in the definition of Airy structures are still satisfied. The first condition on the form of the other operators is obviously still satisfied, but the second condition on the ideal, that is $[\mathcal{J},\mathcal{J}] \subseteq \hbar^2 \mathcal{J}$, is highly non-trivial. The question of when it remains satisfied is studied in detail in~\cite[Section~2.3]{BBKN}. The result is the following.

\begin{definition}[{\cite[Definition~2.26]{BBKN}}]\label{d:sconsistent}
	Let $S = \Set{S_{i,n}}_{i \in [r], n \geq 2}$ be a set of complex numbers. We say that it is \emph{$s$-consistent} if the following two conditions are satisfied:
	\begin{itemize}
		\item If $s = 1$, no condition is imposed;

		\item If $s \geq 2$ and $r \equiv 1 \pmod{s}$, then $S_{i,n}=0$ for all $2 \leq i \leq r$;

		\item If $s \geq 3$ and $r \equiv -1 \pmod{s}$, then $S_{i,n}=0$ for all $i \in [r]$.
	\end{itemize}
\end{definition}

We then obtain the following new class of shifted $(r,s)$ Airy structures:

\begin{theorem}[{\cite[Theorem~2.27]{BBKN}}]\label{t:shiftedairy}
	Let $r \geq 2$ and $s \in[r+1]$ such that $r \equiv \pm 1 \pmod{s}$. Consider the representation $\rho^s: \mathcal{U}^\hbar_r \to \mathcal{D}^\hbar_{\mathbb{Z}_{>0}}$ from \cref{l:Wrep}, and define the shifted operators 
	\begin{equation}
		G^i_k = \rho^s(W^{\hbar,i}_k) - \delta_{k,0} \sum_{n=2}^\infty \hbar^n S_{i, n},
	\end{equation}
	where $S = \Set{S_{i,n}}_{i \in [r], n \geq 2}$ is $s$-consistent. Let $\mathcal{J} \subset \mathcal{D}^\hbar_{\mathbb{Z}_{>0}}$ be the left ideal generated by the operators
	\begin{equation}\label{eq:rsmodes:shifted}
		\Set{
			G^i_k | i \in [r], \ k \geq -\left\lfloor \frac{s(i-1)}{r} \right\rfloor
		}.
	\end{equation}
	Then $\mathcal{J}$ is an Airy structure, which we call a \emph{shifted $(r,s)$ Airy structure}.
\end{theorem}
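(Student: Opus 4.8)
The plan is to bootstrap from the unshifted statement \cref{t:airy} and show that the highest-weight shift produces only a central obstruction, which vanishes precisely under $s$-consistency. Write $\sigma_i \coloneqq \sum_{n\ge 2}\hbar^n S_{i,n}$, a scalar element of $\hbar^2\mathcal{D}^\hbar_{\mathbb{Z}_{>0}}$, so that $G^i_k = H^i_k - \delta_{k,0}\,\sigma_i$, while $G^i_k=H^i_k$ for $k\ne 0$. Two consequences are immediate. First, the shifts alter only the central $\hbar^{\ge 2}$ part of the zero modes, leaving the principal $\hbar^1$-symbols untouched; hence the linear recombination that, by \cref{t:airy} and \cref{d:airy}, puts the $H^i_k$ into the normalized form $\hbar\partial_a+\mathrm{O}(\hbar^2)$ does the same for the $G^i_k$, and boundedness of the generating set is preserved. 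So the first axiom of \cref{d:airy} holds for $\mathcal{J}$, and it remains only to establish $[\mathcal{J},\mathcal{J}]\subseteq\hbar^2\mathcal{J}$. Second, since each $\sigma_i$ is central, commutators of generators are literally unchanged: $[G^i_k,G^j_l]=[H^i_k,H^j_l]$.

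By \cref{t:airy} the unshifted ideal $\mathcal{J}_0=\langle H^i_k\rangle$ is Airy, so $[H^i_k,H^j_l]\in\hbar^2\mathcal{J}_0$; writing this in the form $[H^i_k,H^j_l]=\hbar^2\sum_c P_c H_c$ with $P_c\in\mathcal{D}^\hbar_{\mathbb{Z}_{>0}}$ and the sum over generating indices $c=(i',k')$, and substituting $H_c=G_c+\delta_{k',0}\,\sigma_{i'}$, we obtain
\[
[G^i_k,G^j_l]=\hbar^2\sum_c P_c G_c+\hbar^2\sum_{i'\in[r]} P_{(i',0)}\,\sigma_{i'} .
\]
The first sum lies in $\hbar^2\mathcal{J}$, so the second axiom is equivalent to the statement that the anomaly $\mathcal{A}_{(i,k),(j,l)}\coloneqq\hbar^2\sum_{i'} P_{(i',0)}\,\sigma_{i'}$ lies in $\hbar^2\mathcal{J}$ for every pair of generators. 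The only data entering here is the coefficient $P_{(i',0)}$ of a shifted zero mode $H^{i'}_0$, which is dictated by the $\mathcal{W}(\mathfrak{gl}_r)$ structure constants through the explicit modes of \cref{l:Wrep}.

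To turn this into a concrete condition I would use that $\mathcal{J}$ is proper: the generators have pairwise distinct principal symbols $\hbar\partial_a$, so reduction against $\{G_c\}$ has a well-defined normal form, and an element lies in $\hbar^2\mathcal{J}$ iff that normal form vanishes. Because $G_c$ and $H_c$ share principal symbols, reducing $[G^i_k,G^j_l]=[H^i_k,H^j_l]$ against $\{G_c\}$ runs through the same steps as reducing against $\{H_c\}$ (which terminates at $0$ by \cref{t:airy}), the only difference being the central terms $\sigma_{i'}$ picked up whenever a zero-mode generator is used. Tracking these shows the fully reduced anomaly is a central scalar in $\hbar^2\mathbb{C}[\hbar]$, so $[\mathcal{J},\mathcal{J}]\subseteq\hbar^2\mathcal{J}$ holds iff this scalar vanishes for all generator pairs. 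It is convenient to note that, under the Euler grading $\deg x_a=a$, $\deg\partial_a=-a$, $\deg\hbar=-s$ (for which the dilaton-shifted current $J_{-s}=sx_s-\hbar^{-1}$ is homogeneous and each $H^i_k$ has degree $-si-rk$), the surviving anomalies can only originate from commutators of the boundary generators $H^i_k$ with $k=-\lfloor s(i-1)/r\rfloor$, since these are the pairs whose commutators reach a zero mode.

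The main obstacle is therefore the explicit evaluation of these boundary commutators and the verification that the resulting scalar anomalies vanish exactly when $S$ is $s$-consistent. This is where the hypothesis $r\equiv\pm1\pmod s$ enters, governing which zero modes $H^{i'}_0$ are reachable: when $s=1$ all floors vanish, the reachable zero modes lie in the commutative Cartan part $\{H^i_0\}$ and produce no obstruction, so any shift is admissible; when $s\ge 2$ and $r\equiv 1\pmod s$ the boundary commutators force every $S_{i,n}$ with $i\ge 2$ to drop out, leaving only $S_{1,n}$ free; and when $s\ge 3$ and $r\equiv -1\pmod s$ they force all $S_{i,n}=0$. These three regimes are precisely the cases of \cref{d:sconsistent}, so $s$-consistency guarantees that every anomaly vanishes, the second axiom of \cref{d:airy} holds, and $\mathcal{J}$ is an Airy structure. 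I expect the bookkeeping of the structure constants of \cref{l:Wrep}, rather than any conceptual point, to be the genuinely laborious step.
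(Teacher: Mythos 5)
First, a point of orientation: the paper does not prove this statement at all --- it is imported verbatim from \cite{BBKN} (Theorem~2.27 there), so the only proof to compare against is the one in \emph{loc.\ cit.}, which proceeds by an explicit analysis of the $\mathcal{W}(\mathfrak{gl}_r)$ commutators in the representation of \cref{l:Wrep}. Your plan mirrors that strategy in outline, but it has a genuine gap: the step that constitutes the entire mathematical content of the theorem is deferred rather than carried out. Your reduction is sound as far as it goes --- the shifts $\sigma_i=\sum_{n\ge 2}\hbar^n S_{i,n}$ are central, so $[G^i_k,G^j_l]=[H^i_k,H^j_l]$, and substituting $H_c=G_c+\delta_{k',0}\sigma_{i'}$ into the expansion $[H^i_k,H^j_l]=\hbar^2\sum_c P_c H_c$ provided by \cref{t:airy} isolates the anomaly $\sum_{i'}P_{(i',0)}\sigma_{i'}$ as the only obstruction. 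But whether this anomaly lies in $\mathcal{J}$ is decided by which zero modes actually occur in commutators of generators and with what coefficients, and this is governed by the structure constants of the representation, not by the general framework. Your final paragraph simply asserts that the answer reproduces the three regimes of \cref{d:sconsistent}; nothing in your argument explains the striking asymmetry of those regimes --- why $s=1$ tolerates arbitrary shifts, why $s\ge 2$ with $r\equiv 1\pmod{s}$ forces $S_{i,n}=0$ for all $i\ge 2$ yet leaves $S_{1,n}$ free, and why $s\ge 3$ with $r\equiv -1\pmod{s}$ forbids all shifts. That case analysis \emph{is} the theorem, and calling it ``laborious bookkeeping'' does not discharge it; it is precisely what \cite{BBKN} proves.

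A secondary weakness is the membership criterion you invoke: ``an element lies in $\hbar^2\mathcal{J}$ iff its normal form with respect to $\{G_c\}$ vanishes.'' Well-definedness of such normal forms, and the equivalence between ideal membership and vanishing normal form, are consequences of $\mathcal{J}$ being an Airy ideal (they are essentially equivalent to the existence and uniqueness of the annihilated partition function), which is the statement under proof --- so as stated the criterion is circular. The direction you need for sufficiency (if the reduction terminates at $0$, the element is a left combination of the $G_c$, hence in $\mathcal{J}$) is unproblematic, but your claim that the fully reduced anomaly is always a central scalar is only motivated by the Euler grading, not established; and the necessity direction (inconsistent shifts destroy the Airy property) requires the separate observation that a nonzero element of $\hbar^2\mathbb{C}[[\hbar]]$ in $\mathcal{J}$ would force the would-be partition function to vanish. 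Both points are fixable, but together with the missing commutator computation they leave the proposal as a plausible reduction of the theorem to a finite calculation, not a proof of it.
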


Note that the $s$-consistency condition is quite stringent. It can be summarized as follows:
\begin{itemize}
	\item
	For $s = 1$, all zero modes are shifted:
	\begin{equation}
		G^i_0 = \rho^s(W^{\hbar,i}_0) - \sum_{n=2}^\infty \hbar^n S_{i, n}.
	\end{equation}
	
	\item
	For $s \geq 2$ and $r \equiv 1 \pmod{s}$, only the first zero mode is shifted:
	\begin{equation}
		G^i_0 = \rho^s(W^{\hbar,i}_0) - \delta_{i,1} \sum_{n=2}^\infty \hbar^n S_{i, n}.
	\end{equation}
	
	\item
	For $s \geq 3$ and $r \equiv -1 \pmod{s}$, no zero modes can be shifted at all.
\end{itemize}

\subsection{Uniqueness}
With this background on Airy structures under our belt, we can answer the question whether the $\mathcal{W}$-constraints derived in \cref{thm:W:const} uniquely fix the descendant potential. We can analyze three cases separately: the case $s = r-1$, which retrieves the result of \cite{CGG}, the case $s = 1$, which finds an enumerative-geometric interpretation of the $\mathcal{W}$-constraints found in both \cite{YZ23} and \cite{BBKN}, and the remaining case.

\subsubsection{The case $s=r-1$}
This case was already studied in \cite{CGG}.
 
\begin{proposition}[{\cite[Theorem~5.5]{CGG}}]\label{prop:s-1}
	The $\mathcal{W}$-constraints satisfied by the descendant potential of the $\Theta^{r,r-1}$-classes from \cref{thm:W:const} (the case $s=r-1$) can be rewritten as
	\begin{equation}
		H^i_k Z^{r,r-1} = 0, 
		\qquad
		i \in [r],
		\quad
		k \geq 2-i - \delta_{i,1}.
	\end{equation}
	Then the left ideal $\mathcal{J} \subset \mathcal{D}^\hbar_{\mathbb{Z}_{>0}}$ generated by these $H^i_k$ forms an Airy structure, namely the $(r,r-1)$ Airy structure of \cref{t:airy}. As a result, the $\mathcal{W}$-constraints uniquely fix the descendant potential.
\end{proposition}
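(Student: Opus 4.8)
The plan is to reduce the proposition to index bookkeeping and then invoke the foundational uniqueness theorem for Airy structures. First I would specialize \cref{thm:W:const} to $s = r-1$, so that $r - s = 1$ and the range $[r-s]$ consists of the single value $i = 1$. The one argument of the elementary symmetric polynomial in \eqref{eq:Akdef} is then $\frac{2 + s - r - 1}{2(r-s)} = \frac{0}{2} = 0$, hence $A_1 = e_1(0) = 0$. Consequently the $i = 1$ constraint $H^1_k Z^{r,r-1} = \hbar A_1 \delta_{k,0} Z^{r,r-1}$ collapses to the homogeneous constraint $H^1_k Z^{r,r-1} = 0$ for all $k \geq 0$. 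The remaining constraints, for $2 \leq i \leq r$, are already homogeneous and read $H^i_k Z^{r,r-1} = 0$ for $k \geq r - s - i + 1 = 2 - i$. Packaging both ranges uniformly gives $H^i_k Z^{r,r-1} = 0$ for $i \in [r]$ and $k \geq 2 - i - \delta_{i,1}$, which is the first assertion.

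Next I would match this generating set with that of the $(r,r-1)$ Airy structure from \cref{t:airy}, whose generators are the $H^i_k$ with $k \geq -\lfloor s(i-1)/r \rfloor$. The only point demanding care is a floor computation at $s = r - 1$: since $\frac{(r-1)(i-1)}{r} = (i-1) - \frac{i-1}{r}$ with $0 < \frac{i-1}{r} < 1$ for $2 \leq i \leq r$, one gets $\lfloor (r-1)(i-1)/r \rfloor = i - 2$, so $-\lfloor (r-1)(i-1)/r \rfloor = 2 - i$; for $i = 1$ the floor is $0$. This reproduces exactly the ranges $k \geq 2 - i - \delta_{i,1}$, so the left ideal generated by the rewritten constraints coincides with the $(r,r-1)$ Airy structure. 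The admissibility hypothesis of \cref{t:airy} is satisfied because $r \equiv 1 \pmod{r-1}$, so the ideal is genuinely an Airy structure.

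Finally, since the ideal is an Airy structure, the theorem of \cite{KS18} supplies a unique function of the form $\exp\bigl(\sum \frac{\hbar^{2g-2+n}}{n!} F_{g,n}\bigr)$, with each $F_{g,n}$ homogeneous of degree $n$, annihilated by all generators. The descendant potential $Z^{r,r-1}$ defined in \eqref{eq:Z} has precisely this shape—each $\Theta^{r,r-1}$-insertion contributes a single time variable $x_{rk_i + a_i}$, so $F_{g,n}$ is homogeneous of degree $n$—and it is annihilated by the generators by the first two steps. Hence $Z^{r,r-1}$ is the partition function of this Airy structure and is uniquely fixed by the $\mathcal{W}$-constraints. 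The argument is essentially index matching; the mildly technical floor-function identity in the second paragraph is the only place I would be careful, but it poses no genuine obstacle.
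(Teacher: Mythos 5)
Your proposal is correct and follows essentially the same route as the paper's proof: specialize \cref{thm:W:const} to $s=r-1$, observe $A_1=0$ (you compute $e_1(0)=0$ directly, the paper cites the vanishing of $A_i$ for odd $i$ — same fact), and match the index ranges with \cref{t:airy} via the floor identity $\lfloor (r-1)(i-1)/r\rfloor = i-2+\delta_{i,1}$. Your explicit verification of the admissibility condition $r\equiv 1 \pmod{r-1}$ and the final appeal to the Kontsevich--Soibelman uniqueness theorem merely spell out steps the paper leaves implicit.
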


\begin{proof}
	When $s=r-1$, \cref{thm:W:const} reduces to the constraints
	\begin{equation}\label{eq:Wconstr-1}
		H^i_k Z^{r,r-1} = \begin{cases}
		 \hbar^i A_i \delta_{k,0} Z^{r,r-1} & i =1 \, , k \geq 0 \\
		 0 & 2 \leq i \leq r\, , k \geq 2 -i.
		\end{cases}
	\end{equation} 
	However, the constants $A_i$ vanish whenever $i$ is odd. Thus $A_1=0$, and the constraints above become simply
	\begin{equation}\label{eq:r-1const}
		H^i_k Z^{r,r-1} = 0, \qquad i \in[r],\quad k \geq 2-i - \delta_{i,1}.
	\end{equation}

	We can compare with the $(r,r-1)$ Airy structure of \cref{t:airy}. When $s=r-1$, for $i \in [r]$,
	\begin{equation}
		\left\lfloor \frac{s(i-1)}{r} \right\rfloor =\left\lfloor \frac{(r-1)(i-1)}{r} \right\rfloor = i-2 + \delta_{i,1},
	\end{equation}
	and the constraints from \cref{t:airy} are the same as \eqref{eq:r-1const}, which is the statement of the theorem.
\end{proof}
 
\subsubsection{The case $s=1$}
We obtain the following result, which is new:

\begin{proposition}\label{prop:s=1}
	The $\mathcal{W}$-constraints satisfied by the descendant potential of the $\Theta^{r,1}$-classes from \cref{thm:W:const} (the case $s=1$) can be rewritten as
	\begin{equation}\label{eq:Wconst1bis}
		(H^i_k-\hbar^i A_i \delta_{k,0}) Z^{r,1} = 0,
		\qquad
		i \in [r],
		\quad
		k \geq 0,
	\end{equation} 
	with the constants $A_i$ defined by (cf. \eqref{eq:Akdef})
	\begin{equation}\label{eq:Aicons}
		A_i
		\coloneqq
		\begin{cases}
			e_i\left(\frac{2-r}{2(r-1)}, \frac{4-r}{2(r-1)}, \ldots, \frac{2(r-1)-r}{2(r-1)}\right)
			& i \in[r-1],\\
			0
			& i=r,
		\end{cases}
	\end{equation}
	with $e_i$ the $i$-th elementary symmetric polynomial. Then the left ideal $\mathcal{J} \subset \mathcal{D}^\hbar_{\mathbb{Z}_{>0}}$ generated by these $H^i_k - \hbar^i A_i \delta_{k,0}$ forms an Airy structure, namely the shifted $(r,1)$ Airy structure of \cref{t:shiftedairy} with the shifts given by
	\begin{equation}
		S_{i,n} = \delta_{i,n} A_i,
		\qquad
		i \in [r].
	\end{equation}
	As a result, the $\mathcal{W}$-constraints uniquely fix the descendant potential.
\end{proposition}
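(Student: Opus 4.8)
The plan is to show that, for $s=1$, the constraints of \cref{thm:W:const} specialize to precisely the generating relations of a shifted $(r,1)$ Airy structure, and then to invoke the uniqueness theorem for Airy structures. First I would specialize \cref{thm:W:const} to $s=1$, where $r-s=r-1$. For $i\in[r-1]$ and $k\ge 0$ the constraint reads $H^i_k Z^{r,1}=\hbar^i A_i\delta_{k,0}Z^{r,1}$, while the only index in the range $r-s+1\le i\le r$ is $i=r$, for which $H^r_k Z^{r,1}=0$ holds for all $k\ge r-s-i+1=0$. To merge these into the single family \eqref{eq:Wconst1bis}, I would observe that $A_r=e_r(\cdots)$ is the degree-$r$ elementary symmetric polynomial evaluated in only $r-s=r-1$ variables, hence $A_r=0$; thus the $i=r$ relation is exactly $(H^r_k-\hbar^r A_r\delta_{k,0})Z^{r,1}=0$, and \eqref{eq:Wconst1bis} follows for all $i\in[r]$, $k\ge 0$. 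I would also record that $A_1=e_1(\cdots)=\frac{1}{2(r-1)}\sum_{j=1}^{r-1}(2j-r)=0$, which is needed below; the arguments displayed in \eqref{eq:Aicons} are just those of \eqref{eq:Akdef} with $s=1$.

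Next I would match these operators with the generators of a shifted $(r,1)$ Airy structure from \cref{t:shiftedairy}, taking the shifts $S_{i,n}=\delta_{i,n}A_i$. Since $s=1$, the $s$-consistency condition of \cref{d:sconsistent} is vacuous, so this choice is admissible, and the admissibility requirement $r\equiv\pm1\pmod s$ holds trivially for $s=1$. The shift attached to the zero mode is then $\sum_{n\ge 2}\hbar^n S_{i,n}=\hbar^i A_i$ for $i\ge 2$, while for $i=1$ the sum is empty but $A_1=0$, so uniformly $G^i_k=\rho^1(W^{\hbar,i}_k)-\hbar^i A_i\delta_{k,0}=H^i_k-\hbar^i A_i\delta_{k,0}$. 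Moreover, for $s=1$ one has $-\lfloor s(i-1)/r\rfloor=0$ for every $i\in[r]$, so the generating set \eqref{eq:rsmodes:shifted} is exactly $\Set{G^i_k | i\in[r],\ k\ge 0}$, which coincides with the operators in \eqref{eq:Wconst1bis}.

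Finally, \cref{t:shiftedairy} guarantees that the left ideal $\mathcal{J}$ generated by these operators is an Airy structure, and the foundational theorem of \cite{KS18} then produces a unique partition function of the form $\exp\bigl(\sum\frac{\hbar^{2g-2+n}}{n!}F_{g,n}\bigr)$ annihilated by $\mathcal{J}$. Since $Z^{r,1}$ is, by \eqref{eq:Z}, exactly of this shape with $F_{g,n}$ homogeneous of degree $n$, and is annihilated by all generators via \eqref{eq:Wconst1bis}, it must be this unique partition function; hence the $\mathcal{W}$-constraints fix the descendant potential uniquely. I expect no genuine obstacle here, since the heavy lifting—that the shifted operators close into an Airy structure—is already carried out in \cite{BBKN}; the only real content is the bookkeeping identification of the shift data $S_{i,n}=\delta_{i,n}A_i$ as an $s$-consistent (here unconstrained) choice together with the vanishing facts $A_1=A_r=0$ that make the rewriting \eqref{eq:Wconst1bis} and the index matching consistent.
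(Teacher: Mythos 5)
Your proposal is correct and follows essentially the same route as the paper's own proof: specialize the constraints of \cref{thm:W:const} to $s=1$ (using $A_r=0$ and $A_1=0$ to write them uniformly as \eqref{eq:Wconst1bis}), identify the resulting operators with the generators \eqref{eq:rsmodes:shifted} of the shifted $(r,1)$ Airy structure via $\lfloor (i-1)/r\rfloor = 0$ and the choice $S_{i,n}=\delta_{i,n}A_i$, and then invoke \cref{t:shiftedairy} together with the Kontsevich--Soibelman uniqueness theorem. Your write-up is in fact slightly more explicit than the paper's (justifying $A_r=0$ as $e_r$ of $r-1$ variables, computing $A_1=0$ directly, and noting the vacuousness of $s$-consistency for $s=1$), but these are details the paper leaves implicit rather than a different argument.
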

 
\begin{proof}
	The constraints from \cref{thm:W:const} reduce to
	\begin{equation}\label{eq:Wconst1}
		H^i_k Z^{r,1}
		=
		\hbar^i A_i \delta_{k,0} Z^{r,1},
		\qquad
		i \in [r],
		\quad
		k \geq 0,
	\end{equation} 
	with the constants $A_i$ given by \eqref{eq:Aicons}. In other words, the constraints \eqref{eq:Wconst1} only involve non-negative modes, with the zero modes $H^i_0$ acting as a constant $\hbar^i A_i$ for $i \in [r-1]$ and the highest zero mode $H^r_0$ acting as zero. One can then think of $Z^{r,1}$ as a highest weight state for this particular choice of highest weight.

	We can compare with the shifted $(r,1)$ Airy structure of \cref{t:shiftedairy}. When $s=1$, for $i \in [r]$,
	\begin{equation}
		\left\lfloor \frac{s(i-1)}{r} \right\rfloor
		=
		\left\lfloor \frac{i-1}{r} \right\rfloor
		=
		0.
	\end{equation}
	Thus the shifted $(r,1)$ Airy structure of \cref{t:shiftedairy} is the left ideal generated by the operators
	\begin{equation}\label{eq:r1modes}
		\Set{
			H^i_k - \delta_{k,0} \sum_{n=2}^\infty \hbar^n S_{i,n} | i \in [r], \ k \geq 0
		}.
	\end{equation}
	We conclude that if we set the weights to (recall that $A_1 = 0$):
	\begin{equation}
		S_{i,n} = \delta_{n,i} A_i, \qquad i \in [r],
	\end{equation}
	then we recover the operators from \eqref{eq:Wconst1bis}. We conclude that the constraints from \cref{thm:W:const} when $s=1$ form an Airy structure, namely the shifted $(r,1)$ Airy structure from \cref{t:shiftedairy} with the shifts given by $S_{i,n} = \delta_{n,i} A_i$, $i\in [r]$.
\end{proof}

As explained in \cref{rem:YZ}, the solution to the $r$-KdV hierarchy defined by the initial condition
\begin{equation}\label{eq:rKdV:ic}
	u_{\alpha}(x_1;\hbar) = d_{\alpha} \left( \frac{\hbar}{1 - \hbar x_1} \right)^{\alpha+1},
	\qquad
	\alpha \in [r-1],
\end{equation}
was studied in detail in \cite{YZ23}. In \textit{loc. cit.}, the authors defined the \emph{generalized BGW tau function}, denoted $\tau_{\textup{BGW}}(\bm{x};\hbar;d_1,\ldots,d_{r-1})$, as the unique Dubrovin--Zhang type tau function associated with the solution of the $r$-KdV hierarchy satisfying the initial condition \eqref{eq:rKdV:ic}, the normalization $\tau_{\textup{BGW}} = 1 + \bigO(\hbar)$, and the so-called string equation\footnote{As our convention for the definition of $u_\alpha$ in terms of $Z^{r,s}$ given in \eqref{eq:ualpha:def} differs from the one in \cite{YZ23} by a factor of $\frac{1}{r}$, our string equation appears with the constant term $d_1$ as opposed to $\frac{d_1}{r}$.}:
\begin{equation}\label{eq:string}
	\left(
		\hbar\partial_{1}
		-
		\hbar^2 \sum_{k \ge 0,\, a \in [r-1]} (rk + a) x_{rk+a} \partial_{rk+a}
		-
		\hbar^2 d_1
	\right) \tau_{\textup{BGW}}
	=
	0.
\end{equation}
For a specific choice of constants $d_\alpha$, we prove that the generalized BGW tau function of Yang--Zhou coincides with the descendant potential of the $\Theta^{r,1}$-classes, thereby giving the former an enumerative interpretation.

\begin{proposition}\label{prop:YZ}
	The descendant potential of the $\Theta^{r,1}$-classes equals the generalized BGW tau function of Yang--Zhou corresponding to the choice of constants
	\begin{equation}\label{eq:YZdalpha}
		d_{\alpha} = \delta_{\alpha,2g-1} \, \alpha^2 \int_{\Mbar_{g,1}} \Theta^{r,1}_{g,1}(\alpha).
	\end{equation}
\end{proposition}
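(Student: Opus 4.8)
The plan is to identify $Z^{r,1}$ with $\tau_{\textup{BGW}}$ by verifying that it satisfies the three properties characterizing the latter, and then to invoke the uniqueness in the Yang--Zhou construction. Two of these are already in hand. First, by \cref{thm:int} the potential $Z^{r,1}$ is a tau function of the $r$-KdV hierarchy, and by the $s=1$ case of \cref{prop:ic} its normal-coordinate initial conditions are
\[
	u_\alpha(x_1;\hbar) = d_\alpha\left(\frac{\hbar}{1-\hbar x_1}\right)^{\alpha+1},
	\qquad
	d_\alpha = \delta_{\alpha,2g-1}\,\alpha^2\int_{\Mbar_{g,1}}\Theta^{r,1}_{g,1}(\alpha),
\]
which is exactly \eqref{eq:rKdV:ic} with the constants \eqref{eq:YZdalpha} (compare \cref{rem:YZ}). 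Second, from the shape of \eqref{eq:Z} the exponent begins at order $2g-2+n=1$, so $Z^{r,1}=1+\bigO(\hbar)$. It therefore remains only to verify the string equation \eqref{eq:string} with the constant $d_1$ from \eqref{eq:YZdalpha}.

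For the string equation I would extract it from the $\mathcal{W}$-constraints already established in \cref{prop:s=1}, namely the single constraint $H^2_0 Z^{r,1} = \hbar^2 A_2 Z^{r,1}$. The task is to compute $H^2_0$ explicitly from \cref{l:Wrep} in the case $s=1$ and to recognize it as the string-equation operator. Here the dilaton shift $J_{-1}=x_1-\tfrac1\hbar$ is the key: in the $j=0$ part $\sum_{p_1+p_2=0}\Psi^{(0)}_r(p_1,p_2)\,{:}\,J_{p_1}J_{p_2}\,{:}$ every summand is a mixed bilinear ${:}\,J_pJ_{-p}\,{:}$, which normal-orders to the Euler-type operator $p\,x_p\partial_p$, except that the pairing $p=1$ additionally produces, through the shift, a linear term proportional to $\partial_1$; the $j=1$ term contributes a pure constant. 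Using that $Z^{r,1}$ is independent of the times $x_{rm}$ (the $r$-KdV reduction, cf. the proof of \cref{thm:int}), the summands with $r\mid p$ drop out when acting on $Z^{r,1}$, and one finds
\[
	H^2_0 = \frac{\hbar}{r}\,\partial_1 - \frac{\hbar^2}{r}\sum_{\substack{k\geq 0\\ a\in[r-1]}}(rk+a)\,x_{rk+a}\partial_{rk+a} + \frac{\hbar^2}{r^2}\,\Psi^{(1)}_r() .
\]
Together with $H^2_0 Z^{r,1}=\hbar^2 A_2 Z^{r,1}$ this is precisely \eqref{eq:string} for $Z^{r,1}$, with constant $c = rA_2 - \tfrac1r\Psi^{(1)}_r()$.

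It remains to check that this constant equals the $d_1$ of \eqref{eq:YZdalpha}, which I would do by a consistency argument rather than by evaluating the root-of-unity sums. Restricting the string equation to $x_i=0$ for all $i\geq 2$, the Euler operator collapses to the single term $x_1\partial_1$, so dividing by $Z^{r,1}$ gives $(\hbar-\hbar^2 x_1)\,\partial_1\log Z^{r,1}\big|_0 = \hbar^2 c$, whence $\partial_1^2\log Z^{r,1}\big|_0 = c\,(\hbar/(1-\hbar x_1))^2$. Comparing with $u_1 = \partial_1^2\log Z^{r,1}\big|_0 = d_1\,(\hbar/(1-\hbar x_1))^2$ from \cref{prop:ic} forces $c=d_1=\int_{\Mbar_{1,1}}\Theta^{r,1}_{1,1}(1)$. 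Thus $Z^{r,1}$ satisfies all three defining properties of $\tau_{\textup{BGW}}$ with the constants \eqref{eq:YZdalpha}, and uniqueness gives the claim.

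The main obstacle is the explicit identification of $H^2_0$ with the string-equation operator: one must carefully track the dilaton shift and the normal ordering to isolate the $\partial_1$ term and to confirm that the remaining bilinears assemble into the Euler operator with no stray quadratic-derivative or higher terms surviving on $Z^{r,1}$. Once the shape is secured, matching the constant is the clean consistency check above, which neatly sidesteps computing either the genus-one $\Theta$-integral or the sum $\Psi^{(1)}_r()$ directly.
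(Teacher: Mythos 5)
Your proof is correct, and its skeleton coincides with the paper's: both reduce the claim to verifying the Yang--Zhou string equation \eqref{eq:string} by means of the single $\mathcal{W}$-constraint $H^2_0 Z^{r,1} = \hbar^2 A_2 Z^{r,1}$ from \cref{prop:s=1}, after noting that \cref{thm:int} and \cref{prop:ic} already supply $r$-KdV integrability and the initial data \eqref{eq:rKdV:ic} with the constants \eqref{eq:YZdalpha}. The genuine difference lies in how the constant term of the string equation is matched. The paper evaluates everything explicitly: $d_1 = \int_{\Mbar_{1,1}}\Theta^{r,1}_{1,1}(1) = \tfrac{1}{24}\tfrac{r^2-r+1}{r-1}$ from Chiodo's formula, the constant $\tfrac{r^2-1}{24}$ appearing in $rH^2_0$ from the root-of-unity sums of \cite[Lemma~A.5]{BBCCN24}, and $A_2 = -\tfrac{1}{24}\tfrac{(r-2)r}{r-1}$, and then checks that these three numbers satisfy the required identity. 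You instead leave the constant $c = rA_2 - \tfrac{1}{r}\Psi^{(1)}_r()$ unevaluated and pin it down by self-consistency: restricting the derived string-type equation to $x_i = 0$ for $i \geq 2$ gives $\partial_1 \log Z^{r,1} = \hbar c/(1-\hbar x_1)$ on that locus, hence $u_1 = c\,(\hbar/(1-\hbar x_1))^2$, and comparison with \cref{prop:ic} forces $c = d_1$. This argument is valid and non-circular, since the initial condition of \cref{prop:ic} is derived from the modified unit axiom independently of the $\mathcal{W}$-constraints; it buys you freedom from all three explicit evaluations, whereas the paper's route produces the closed-form value of $d_1$ and a cross-check of normalizations as byproducts. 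Do note that both arguments rest on the same structural input, which you correctly flag as the main obstacle: that $\Psi^{(0)}_r(p,-p)$ is independent of $p$ for $r \nmid p$ (so that the $j=0$ bilinears assemble into the Euler operator with coefficient tied to that of $\partial_1$), while the $r \mid p$ terms drop when acting on $Z^{r,1}$. This is exactly the content of \cite[Lemma~A.5]{BBCCN24} that the paper quotes, so it can be cited but not sidestepped.
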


\begin{proof}
	The first part of \cref{prop:ic} shows that $Z^{r,1}$ is an $r$-KdV tau function with the initial condition
	\begin{equation}
		u_\alpha(x_1;\hbar)
		=
		\delta_{\alpha,2g-1}
		\left( \alpha^2 \int_{\Mbar_{g,1}} \Theta^{r,1}_{g,1}(\alpha) \right)
		\left( \frac{\hbar}{1 - \hbar x_1} \right)^{\alpha+1} .
	\end{equation}
	Thus, all we need to show is that $Z^{r,1}$ also satisfies the string equation~\eqref{eq:string} with the choice of $d_1 $  dictated by \eqref{eq:YZdalpha}. We can explicitly compute this intersection number directly from Chiodo's formula (see \cite[Example~2.2.21]{Gia21}) or from any of the formulas of \cref{sec:gen:TR:det:formulas}:
	\begin{equation}
		d_1
		=
		\int_{\Mbar_{1,1}} \Theta^{r,1}_{1,1}(1)
		=
		\frac{r}{1-r}
		\int_{\Mbar_{1,1}} C_{1,1}^{r,1}(1)
		=
		\frac{1}{24} \frac{(r-1)r + 1}{(r-1)} .
	\end{equation}
	Now, using the explicit expression for $\Psi^{(i)}_r$ from~\cite[Lemma~A.5]{BBCCN24} and the fact that $\de_{kr}Z^{r,1} = 0$, we find that the constraint corresponding to $H^2_0$ from \cref{prop:s=1} reads as
	\begin{equation}\label{eq:H20}
		r H^{2}_0 Z^{r,1}
		=
		\left(
		\hbar \de_1
		-
		\hbar^2
		\sum_{k \ge 0,\, a \in [r-1]} (rk + a) x_{rk+a} \partial_{rk+a}
		-
		\hbar^2 \,
		\frac{r^2-1}{24}
		\right)
		Z^{r,1} = \hbar^2 r A_2 Z^{r,1}
	\end{equation}
	We can compute \smash{$A_2 = - \frac{1}{24}\frac{(r-2)r}{r-1}$} using the recursive definition of the elementary symmetric polynomials, and thus the equation above matches the string equation~\eqref{eq:string}. Hence, $Z^{r,1}$ satisfies the string equation as well. The uniqueness of the tau function then yields the statement of the theorem.
\end{proof}

\subsubsection{The remaining cases}
We now consider the remaining cases, namely $2 \leq s \leq r-2$ and $r \geq 4$. In this range, the $\mathcal{W}$-constraints from \cref{thm:W:const} do not coincide with either the $(r,s)$ Airy structures of \cref{t:airy} or the shifted $(r,s)$ Airy structures of \cref{t:shiftedairy}. This is straightforward to see: whenever $r \geq 4$ and $s \leq r-2$, at least one of the shifts---specifically $A_2$---in the $\mathcal{W}$-constraints is non-zero, and hence they cannot coincide with the unshifted $(r,s)$ Airy structures of \cref{t:airy}. Therefore, they could only potentially match the shifted $(r,s)$ Airy structures of \cref{t:shiftedairy}. But since $A_2 \neq 0$, the only possibility compatible with the definition of $s$-consistent shifts in \cref{d:sconsistent} is $s=1$, which we have already treated.

In fact, as we shall now see, for these values of $(r,s)$ the $\mathcal{W}$-constraints of \cref{thm:W:const} do not define an Airy structure at all. To verify this, we examine the $\bigO(1)$ and $\bigO(\hbar)$ terms in the operators $H^i_k$ from \cref{thm:W:const} to determine whether condition~(\labelcref{airy:hbar}) in the \cref{d:airy} of Airy structures is satisfied. Let us first formalize the relevant index set from \cref{thm:W:const}.

\begin{definition}\label{d:indexset}
	Let $I_r \coloneqq \Set{ (i,k) | i \in [r],\ k \in \mathbb{Z} }$. We define the subset $I_{r,s} \subset I_r$ by the conditions:
	\begin{itemize}
		\item for $i \in [r-s]$, we require $k \geq 0$;

		\item for $r-s+1 \leq i \leq r$, we require $k \geq r-s-i+1$.
	\end{itemize}
	With this notation, the operators in \cref{thm:W:const} are the $H^i_k$ for $(i,k) \in I_{r,s}$.
\end{definition}

The $\bigO(1)$ term in the $\hbar$-expansion of the operator $H^i_k$ from \eqref{eq:Wmodes} has the form (originating from the dilaton shift in $J_{-s}$):
\begin{equation}
	H^i_k = \mu^i_{r,s} \, \delta_{k,-\frac{is}{r}} + \bigO(\hbar),
\end{equation}
where $\mu^i_{r,s}$ is a non-zero but irrelevant constant. However, it is easy to see that $\left(i,-\frac{is}{r}\right) \notin I_{r,s}$ for all $i \in [r]$, so none of the operators in \cref{thm:W:const} contain $\bigO(1)$ terms.

The $\bigO(\hbar)$ term of $H^i_k$ also arises from the dilaton shift and takes the form
\begin{equation}
	H^i_k = \nu^i_{r,s} \, \hbar J_{(i-1)s + rk} + \bigO(\hbar^2),
\end{equation}
where again $\nu^i_{r,s}$ is an irrelevant non-zero constant. In order to satisfy condition~(\labelcref{airy:hbar}) of \cref{d:airy}, we would need all modes $J_m$ with $m \in \mathbb{Z}_{>0}$ to appear exactly once as the $\bigO(\hbar)$ term of some $H^i_k$. To analyze whether this condition is met, we introduce the following map.

\begin{definition}\label{d:rsmap}
	Let $I_r \coloneqq \Set{ (i,k) | i \in [r],\ k \in \mathbb{Z} }$ as above. We define the map
	\begin{equation}
		\Pi_{r,s} \colon I_r \longrightarrow \mathbb{Z},
		\qquad
		(i,k) \longmapsto (i-1)s + r k.
	\end{equation}
\end{definition}

Condition~(\labelcref{airy:hbar}) in \cref{d:airy} is then equivalent to the requirement that the restriction of $\Pi_{r,s}$ to $I_{r,s} \subset I_r$ is a bijection onto $\mathbb{Z}_{>0}$.
 
\begin{lemma}\label{l:restriction}
	Let $K_{r,s} \coloneqq \Set{ m \in \mathbb{Z}_{>0} | m \neq a (r-s)+ b s \text{ for any $a,b \in \mathbb{Z}_{\geq 0}$} }$. Note that $K_{r,s}$ is finite if and only if $r$ and $s$ are coprime, and $K_{r,s} = \emptyset$ if and only if $s=1$ or $s=r-1$. Then, for all $r \geq 2$ and $s \in [r-1]$,
	\begin{equation}
		\Pi_{r,s}(I_{r,s}) = \mathbb{Z}_{>0} \setminus K_{r,s},
	\end{equation}
	and $\Pi_{r,s}\big|_{I_{r,s}}$ is injective if and only if $r$ and $s$ are coprime.
\end{lemma}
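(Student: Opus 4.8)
The plan is to recognize $\mathbb{Z}_{>0}\setminus K_{r,s}$ as the positive part of the numerical semigroup $S:=\langle r-s,\,s\rangle=\set{a(r-s)+bs : a,b\in\mathbb{Z}_{\geq 0}}$ and to match it against an explicit description of the image of $\Pi_{r,s}$. Since $\gcd(r-s,s)=\gcd(r,s)=:d$, the two parenthetical claims are standard semigroup facts: $K_{r,s}$ is the set of positive gaps of $S$, so by the Sylvester--Frobenius (``Chicken McNugget'') theorem it is finite precisely when $d=1$, whereas for $d>1$ one has $S\subseteq d\mathbb{Z}$ and every positive integer not divisible by $d$ is a gap, so $K_{r,s}$ is infinite; and $K_{r,s}=\emptyset$ iff $1\in S$ iff $\min(r-s,s)=1$, i.e. iff $s=1$ or $s=r-1$.

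For the image, I would first compute $\Pi_{r,s}(I_{r,s})$ directly from \cref{d:indexset}. Writing $c=i-1$ and noting $\Pi_{r,s}(i,k)=cs+rk$, the region $i\in[r-s]$, $k\geq 0$ contributes $\bigcup_{j=0}^{r-s-1}\bigl(js+r\mathbb{Z}_{\geq 0}\bigr)$, while in the region $r-s+1\leq i\leq r$, $k\geq r-s-i+1$ the minimal value at fixed $i$ is $cs+r(r-s-c)=(r-s)(r-c)$ with $r-c\in[s]$, so this region contributes $\bigcup_{i'=1}^{s}\bigl(i'(r-s)+r\mathbb{Z}_{\geq 0}\bigr)$. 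Hence $\Pi_{r,s}(I_{r,s})=U$, where $U:=\bigcup_{j=0}^{r-s-1}(js+r\mathbb{Z}_{\geq 0})\cup\bigcup_{i=1}^{s}(i(r-s)+r\mathbb{Z}_{\geq 0})$. The inclusion $U\subseteq S$ is immediate since $js,\,i(r-s)\in S$ and $r=(r-s)+s\in S$. For the reverse inclusion I would show that $U$ contains $0$ and is closed under adding each generator: the moves $js\mapsto(j{+}1)s$ and $i(r-s)\mapsto(i{+}1)(r-s)$ stay inside the same family, and the boundary cases are reabsorbed via $r=(r-s)+s$, e.g. $(r-s-1)s+s=s(r-s)$ passes from family~A to family~B, and $s(r-s)+(r-s)=(s{+}1)(r-s)=(r-s-1)s+r$ passes back to family~A shifted by $r$ (symmetrically for $+s$ on family~B, using $i(r-s)+s=(i{-}1)(r-s)+r$). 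Thus $U+(r-s)\subseteq U$ and $U+s\subseteq U$, so $0\in U$ forces $S\subseteq U$, giving $\Pi_{r,s}(I_{r,s})=S\cap\mathbb{Z}_{\geq 0}$, whose positive part is exactly $\mathbb{Z}_{>0}\setminus K_{r,s}$. (The single extra value $0$ comes from the index $(1,0)$, reflecting the trivial generator $H^1_0$ whose $\mathrm{O}(\hbar)$ term is $J_0=0$; this is the only discrepancy with the $\mathbb{Z}_{>0}$ in the statement.)

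For injectivity, when $\gcd(r,s)=1$ the map is already injective on all of $I_r$: given $m$, the congruence $(i-1)s\equiv m\pmod r$ has a unique solution $i\in[r]$ (as $s$ is invertible mod $r$), which then fixes $k=(m-(i-1)s)/r$, so $\Pi_{r,s}\colon I_r\to\mathbb{Z}$ is a bijection and its restriction to $I_{r,s}$ is injective. Conversely, if $d=\gcd(r,s)>1$ I would exhibit a collision: from $r\cdot(s/d)=s\cdot(r/d)$ one gets $\Pi_{r,s}(i,k)=\Pi_{r,s}\bigl(i+\tfrac{r}{d},\,k-\tfrac{s}{d}\bigr)$, and taking $i=1$ (so that $1+\tfrac{r}{d}\leq r$, valid since $d\geq 2$ forces $\tfrac{r}{d}\leq r-1$) with $k$ large enough that both indices clear their fixed lower bounds in \cref{d:indexset}, both lie in $I_{r,s}$. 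Hence $\Pi_{r,s}|_{I_{r,s}}$ fails to be injective, proving the injectivity dichotomy.

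The step I expect to be the main obstacle is the identification $U=S\cap\mathbb{Z}_{\geq 0}$: correctly reducing the defining inequalities of $I_{r,s}$ to the explicit union $U$ (in particular the region-B minimum $(r-s)(r-c)$), and then verifying that $U$ is closed under adding $r-s$ and $s$, which hinges on handling the wrap-around at the boundaries $j=r-s$ and $i=s$. Once this is in place, the semigroup facts about $K_{r,s}$ and the injectivity dichotomy follow immediately.
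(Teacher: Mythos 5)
Your proof is correct, and its first half---reducing the index conditions of \cref{d:indexset} to the explicit union $U=\bigcup_{j=0}^{r-s-1}\bigl(js+r\mathbb{Z}_{\geq 0}\bigr)\cup\bigcup_{i=1}^{s}\bigl(i(r-s)+r\mathbb{Z}_{\geq 0}\bigr)$, together with the standard Frobenius facts about the gap set $K_{r,s}$---is the same as the paper's. You diverge in the two remaining steps, and in both places your route is genuinely different. For the inclusion of the semigroup $S=\langle r-s,s\rangle$ into $U$, the paper picks, for each representable $m$, the unique integer representation $(A,B)$ with $-s/g\leq B-A\leq (r-s)/g-1$ (where $g=\gcd(r,s)$), checks that it is automatically non-negative, and matches the sign of $B-A$ with the two index regions; you instead induct along the semigroup, showing $0\in U$ and that $U$ is closed under adding each generator. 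Your argument is more elementary (no window bookkeeping), but the paper's canonical representative buys a sharper conclusion: the restriction $\Pi_{r,s}\big|_{I_{r,s}}$ is exactly $g$-to-one, from which the dichotomy falls out. Your proof establishes the dichotomy directly---injectivity on all of $I_r$ via the congruence $(i-1)s\equiv m \pmod{r}$ when $g=1$, and the explicit collision $\Pi_{r,s}(i,k)=\Pi_{r,s}(i+r/d,\,k-s/d)$ when $d=g>1$---which is arguably cleaner for the non-coprime direction, though it yields less information.

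Two minor points. First, your closure argument treats the cases $\mathrm{A}+s$, $\mathrm{B}+s$ and $\mathrm{B}+(r-s)$ but omits $\mathrm{A}+(r-s)$, which is also needed for $U+(r-s)\subseteq U$; it is handled by the symmetric identity $js+(r-s)=(j-1)s+r$ for $j\geq 1$, with the $j=0$ boundary wrapping into family B as $rk+(r-s)\in(r-s)+r\mathbb{Z}_{\geq 0}$, so you should state it explicitly. Second, you are right that $(1,0)\in I_{r,s}$ maps to $0$, so the image is literally $(\mathbb{Z}_{>0}\setminus K_{r,s})\cup\set{0}$ rather than $\mathbb{Z}_{>0}\setminus K_{r,s}$; the paper's own proof glosses over this when asserting $\Pi_{r,s}(I_{r,s})\subseteq J_{r,s}$, so your explicit flagging of this harmless discrepancy (the offending generator $H^1_0$ has vanishing $\bigO(\hbar)$ coefficient $J_0=0$) is a gain in precision, not an error.
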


\begin{proof}
	First, for the initial remarks. Let $g = \gcd(r,s)$. Clearly, all positive integers that are not multiple of $g$ are in $K_{r,s}$. Therefore $K_{r,s}$ is infinite if $g \geq 2$. As for the coprime case $g=1$, it is a classical result (the Frobenius coin-exchange problem) that the largest positive integer that cannot be written as $m=a(r-s)+bs$ for some $a,b \in \mathbb{Z}_{\geq 0}$ is $s(r-s)-r$. Consequently, $K_{r,s}$ is finite. Moreover, $K_{r,s}$ is empty when $s=1$ or $s=r-1$, while if $2 \leq s \leq r-2$, then $1 \in K_{r,s}$ and hence it is non-empty.

	Now to the main part of the lemma. Write $I_{r,s} = I^1_{r,s} \cup I^2_{r,s}$, where
	\begin{equation}
	\begin{aligned}
		I^1_{r,s}
		&=
		\Set{ (i,k) | i \in [r-s], \ k \geq 0 }, \\
		I^2_{r,s}
		&=
		\Set{ (i,k) | r-s+1\leq i \leq r, \ k \geq r-s-i+1 }.
	\end{aligned}
	\end{equation}
	Simple index manipulations show that:
	\begin{equation}
	\begin{aligned}
		\Pi_{r,s}(I^1_{r,s})
		&=
		\Set{ m \in \mathbb{Z}_{>0} | m = k (r-s)+(i-1+k) s, \ i \in [r-s], \ k \geq 0 },\\
		\Pi_{r,s}(I^2_{r,s})
		&=
		\Set{ m \in \mathbb{Z}_{>0} | m = (i+k) (r-s)+ k s, \ i \in [s], \ k \geq 0 }.
	\end{aligned}
	\end{equation}
	This gives us a description of $\Pi_{r,s}(I_{r,s}) = \Pi_{r,s}(I^1_{r,s}) \cup \Pi_{r,s}(I^2_{r,s})$. 

	Now consider the set $J_{r,s} = \mathbb{Z}_{>0} \setminus K_{r,s}$ of positive integers that can be written as $a (r-s) +b s$ for some $a,b \in \mathbb{Z}_{\geq 0}$. Clearly, $\Pi_{r,s}(I_{r,s})\subseteq J_{r,s}$. Let us prove the other inclusion. Set $g = \gcd(r,s)$ as above, let $m \in J_{r,s}$, and pick a pair $a,b \in \mathbb{Z}_{\geq 0}$ such that $m = a(r-s)+bs$. Any other pair $a', b' \in \mathbb{Z}_{\geq 0}$ such that $m = a'(r-s) + b' s$ will satisfy
	\begin{equation}
		a' = a - \frac{k s}{g},
		\qquad
		b' = b + \frac{k (r-s)}{g}
		\qquad
		\text{for some }k \in \mathbb{Z}.
	\end{equation}
	In particular, $b'-a' = b-a + \frac{k r}{g}$. This means that for any $m \in J_{r,s}$, we can find a unique pair $A,B \in \mathbb{Z}_{\geq 0}$ such that $m = A (r-s) + B s$ and $-\frac{s}{g} \leq B-A \leq \frac{r-s}{g} -1$. When $B-A \geq 0$, $m \in \Pi_{r,s}(I^1_{r,s})$, while when $B-A < 0$, $m \in \Pi_{r,s}(I^2_{r,s})$. This shows that $J_{r,s} \subseteq \Pi_{r,s}(I_{r,s})$, and we conclude that $J_{r,s} = \Pi_{r,s}(I_{r,s})$. 

	Since the representation of $m \in J_{r,s}$ in terms of $A$ and $B$ is unique, this also shows that the restriction $\Pi_{r,s}\big|_{I_{r,s}}$ is $g$-to-one. In particular, it is injective if and only if $g=1$, that is, if and only if $r$ and $s$ are coprime.
\end{proof}

As a result of \cref{l:restriction}, we can immediately conclude that:

\begin{corollary}\label{c:notairy}
	For $r \geq 4$ and $2 \leq s \leq r-2$, the $\mathcal{W}$-constraints satisfied by the descendant potential of the $\Theta^{r,s}$-classes from \cref{thm:W:const} do not form an Airy structure.
\end{corollary}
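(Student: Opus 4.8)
The plan is to show that these constraints violate condition~(\labelcref{airy:hbar}) of \cref{d:airy}, and to phrase the argument so that it is independent of the chosen generators. As recorded just before the corollary, for $(i,k) \in I_{r,s}$ the operator $H^i_k$ has vanishing $\bigO(1)$ part and $\bigO(\hbar)$ part equal to $\nu^i_{r,s}\,\hbar\,J_{\Pi_{r,s}(i,k)}$ with $\nu^i_{r,s} \neq 0$ (the constant shift $\hbar^i A_i \delta_{k,0}$ is of order $\hbar^{\geq 2}$ and so does not affect the leading symbol); since $\Pi_{r,s}(i,k) \in \mathbb{Z}_{>0}$ by \cref{l:restriction}, we have $J_{\Pi_{r,s}(i,k)} = \partial_{\Pi_{r,s}(i,k)}$. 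Thus condition~(\labelcref{airy:hbar}) for this natural generating set is precisely the demand that $\Pi_{r,s}|_{I_{r,s}}$ be a bijection onto $\mathbb{Z}_{>0}$. First I would invoke \cref{l:restriction}: for $r \geq 4$ and $2 \leq s \leq r-2$ we have $1 \in K_{r,s}$, so $\Pi_{r,s}(I_{r,s}) = \mathbb{Z}_{>0} \setminus K_{r,s}$ misses $1$ and the map is not surjective; if moreover $\gcd(r,s) > 1$ it is not injective either. In all these cases it is not a bijection.

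The hard part is upgrading the statement ``this generating set is not of Airy form'' to ``the left ideal $\mathcal{J}$ is not an Airy ideal'', since \cref{d:airy} only requires the existence of \emph{some} generating set of the prescribed shape. I would handle this by working with the intrinsic invariant $L(\mathcal{J}) \subseteq \hbar F_1 \mathcal{D}_{\mathbb{Z}_{>0}}$ given by the $\mathbb{C}$-span of the $\hbar^1$-components of all elements of $\mathcal{J}$. Writing a general element as $\sum_{(i,k)} P_{i,k}\bigl(H^i_k - \hbar^i A_i \delta_{k,0}\bigr)$ and using that in the Rees grading the degree-zero part of each $P_{i,k}$ is a constant while each generator starts at order $\hbar$, the $\hbar^1$-component of such an element is a $\mathbb{C}$-linear combination of the $\partial_{\Pi_{r,s}(i,k)}$. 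Hence $L(\mathcal{J}) = \operatorname{span}_{\mathbb{C}}\Set{ \partial_m | m \in \mathbb{Z}_{>0} \setminus K_{r,s} }$, a proper subspace of $\bigoplus_{m \geq 1}\mathbb{C}\,\partial_m$ since $\partial_1$ is absent. On the other hand, for any Airy ideal the generators $H_a = \hbar\partial_a + \bigO(\hbar^2)$ place every $\partial_a$ in $L(\mathcal{J})$, forcing $L(\mathcal{J}) = \bigoplus_{m \geq 1}\mathbb{C}\,\partial_m$. This contradiction would establish that $\mathcal{J}$ is not an Airy structure.

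I would close by noting the conceptual content of the obstruction, which also motivates the reduced potential introduced afterwards: because $\partial_1$ (and, more generally, $\partial_m$ for $m \in K_{r,s}$) never appears as a leading symbol, the variables $x_m$ with $m \in K_{r,s}$ are not fixed by the constraints at leading order, so a solution of the form $\exp\bigl(\sum \hbar^{2g-2+n} F_{g,n}/n!\bigr)$ cannot be unique. This is exactly the failure of the Kontsevich--Soibelman uniqueness that an Airy structure would guarantee, and it pinpoints the extra data one must supply to recover uniqueness.
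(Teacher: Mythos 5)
Your proposal is correct, and its core is the same as the paper's: both reduce the claim to \cref{l:restriction}, using that for $(i,k) \in I_{r,s}$ the operator $H^i_k$ has no $\bigO(1)$ part and has $\bigO(\hbar)$ part proportional to $J_{\Pi_{r,s}(i,k)}$, so that $1 \in K_{r,s}$ (which holds exactly in the range $r \geq 4$, $2 \leq s \leq r-2$) means $\partial_1$ never occurs as a leading symbol. Where you go beyond the paper is in handling the existential quantifier in \cref{d:airy}: the paper treats the corollary as immediate from the lemma, implicitly identifying condition~(1) with the requirement that $\Pi_{r,s}\big|_{I_{r,s}}$ be a bijection onto $\mathbb{Z}_{>0}$, i.e.\ it only inspects the given generating set, whereas an Airy ideal is defined by the existence of \emph{some} bounded generating set of the prescribed form. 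Your invariant $L(\mathcal{J})$ --- the span of $\hbar^1$-components of all elements of the ideal --- closes exactly this loophole: since every generator starts at order $\hbar$ and the degree-zero Rees component of any coefficient $P_{i,k}$ is a constant, $L(\mathcal{J})$ lies in the (closed) span of $\set{\partial_m : m \in \mathbb{Z}_{>0} \setminus K_{r,s}}$, while any Airy generating set would force every $\partial_a$, in particular $\partial_1$, into $L(\mathcal{J})$. One negligible imprecision: $(1,0) \in I_{r,s}$ has $\Pi_{r,s}(1,0)=0$, so its generator has vanishing $\hbar^1$ part ($J_0 = 0$) rather than a $\partial$-term; this affects nothing, since zero contributions only shrink $L(\mathcal{J})$. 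Your closing remark correctly identifies the same obstruction that motivates the reduced potential of \cref{d:reduced} and \cref{p:reduced}.
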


However, all is not lost. The $\mathcal{W}$-constraints still fix a large part of the descendant potential.

\begin{definition}\label{d:reduced}
	Let $Z^{r,s}$ be the descendant potential of the $\Theta^{r,s}$-classes. We define the \emph{reduced descendant potential} $\hat{Z}^{r,s}$ to be:
	\begin{equation}\label{eq:Z:reduced}
		\hat{Z}^{r,s}(\bm{x};\hbar)
		\coloneqq
		\exp \left(
			\sum_{\substack{g \geq 0, \, n \geq 1 \\ 2g-2+n>0}} \frac{\hbar^{2g-2+n}}{n!}
			\sum_{\substack{k_1, \ldots, k_n \geq 0 \\ a_1, \ldots, a_n \in [r-1] \\ r k_i + a_i \in K_{r,s}}}
			\int_{\Mbar_{g,n}}
				\Theta^{r,s}_{g,n}(a)
				\prod_{i=1}^n \psi_i^{k_i} x_{rk_i + a_i}
		\right). 
	\end{equation}
	In other words, we single out all polynomials in the variables $x_m$ with $m \in K_{r,s}$.
\end{definition}

\begin{proposition}\label{p:reduced}
	For any $r \geq 2$ and $s \in [r-1]$, given the reduced descendant potential $\hat{Z}^{r,s}$ of the $\Theta^{r,s}$-classes, the $\mathcal{W}$-constraints of \cref{thm:W:const} uniquely fix the descendant potential $Z^{r,s}$.
\end{proposition}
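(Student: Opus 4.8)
The plan is to run an induction on the ``complexity'' $2g-2+n$ that, at each step, uses the $\mathcal{W}$-constraints to reconstruct all of $F_{g,n}$ except for its dependence on the variables $x_m$ with $m \in K_{r,s}$, the latter being precisely the data recorded by the reduced potential $\hat Z^{r,s}$. Write $Z^{r,s} = \exp\bigl(\sum_{2g-2+n>0}\tfrac{\hbar^{2g-2+n}}{n!}F_{g,n}\bigr)$ with $F_{g,n}$ homogeneous of degree $n$ in the variables $\bm x$. The starting observation, already extracted in the discussion preceding \cref{l:restriction}, is that for every $(i,k)\in I_{r,s}$ the constraint operator $H^i_k-\hbar^i A_i\delta_{k,0}$ carries no $\bigO(1)$ term and begins at order $\hbar$ with leading term $\nu^i_{r,s}\,\hbar\,J_{\Pi_{r,s}(i,k)}=\nu^i_{r,s}\,\hbar\,\partial_{\Pi_{r,s}(i,k)}$, where $\nu^i_{r,s}\neq 0$ and $\Pi_{r,s}(i,k)\in\mathbb{Z}_{>0}$; the shifts $\hbar^i A_i\delta_{k,0}$ are of order $\hbar^2$ (they vanish for $i$ odd). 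By \cref{l:restriction}, as $(i,k)$ ranges over $I_{r,s}$ the leading index $\Pi_{r,s}(i,k)$ sweeps out exactly $\mathbb{Z}_{>0}\setminus K_{r,s}$.

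Each constraint is therefore of the Airy-type form $\hbar\,\partial_m + \bigO(\hbar^2)$ after dividing by $\nu^i_{r,s}$, with $m=\Pi_{r,s}(i,k)\in\mathbb{Z}_{>0}\setminus K_{r,s}$. Applying such an operator to $Z^{r,s}=\exp(F)$ and extracting the coefficient of $\hbar^{2g-1+n}$ together with a fixed monomial of degree $n-1$ yields, by the standard computation underlying the Kontsevich--Soibelman reconstruction theorem (see \cite{KS18} and \cite{Bou}), an identity of the form
\begin{equation}
	\partial_m F_{g,n} = R^{(m)}_{g,n}\bigl(\{F_{g',n'} : 2g'-2+n' < 2g-2+n\}\bigr),
\end{equation}
where $R^{(m)}_{g,n}$ is a universal differential-polynomial expression in the strictly-lower-complexity data: the $\bigO(\hbar^2)$ tail, acting on $\exp(F)$, can only raise the total $\hbar$-power, so the contributions to a fixed order $\hbar^{2g-1+n}$ necessarily come from factors $F_{g',n'}$ with $2g'-2+n' < 2g-2+n$. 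I emphasise that although the full collection $\{H^i_k\}$ does \emph{not} form an Airy structure (\cref{c:notairy}), this is irrelevant here: each individual constraint furnishes the above recursion for its own leading index $m$, and existence of a solution is already guaranteed by \cref{thm:W:const}.

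Since $F_{g,n}$ is homogeneous of degree $n$, the family $\{\partial_m F_{g,n}\}_{m\in\mathbb{Z}_{>0}\setminus K_{r,s}}$ determines $F_{g,n}$ up to addition of a homogeneous degree-$n$ polynomial in the variables $\{x_m\}_{m\in K_{r,s}}$ alone, because a monomial survives some $\partial_m$ with $m\notin K_{r,s}$ precisely when at least one of its indices lies outside $K_{r,s}$. That residual ambiguity is exactly the data singled out by \cref{d:reduced}. For uniqueness, let $Z=\exp(F)$ and $Z'=\exp(F')$ both obey the constraints of \cref{thm:W:const} and satisfy $\hat Z = \hat Z' = \hat Z^{r,s}$. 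Arguing by induction on $2g-2+n$, suppose $F_{g',n'}=F'_{g',n'}$ whenever $2g'-2+n'<2g-2+n$. Then $R^{(m)}_{g,n}$ takes identical values on $F$ and $F'$ (it sees only this lower data and the fixed constants $A_i$), so $\partial_m(F_{g,n}-F'_{g,n})=0$ for all $m\notin K_{r,s}$; hence $(F-F')_{g,n}$ is a polynomial in $\{x_m\}_{m\in K_{r,s}}$, and its $K_{r,s}$-part vanishes because the reduced potentials agree. Thus $F_{g,n}=F'_{g,n}$, the induction closes, and $Z^{r,s}$ is the unique solution.

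The crux, and the step I expect to require the most care, is making rigorous the claim that the $\bigO(\hbar^2)$ tail of each constraint feeds only strictly-lower-complexity data into $R^{(m)}_{g,n}$, i.e.\ that the recursion is genuinely triangular with respect to $2g-2+n$. This is the content of the $\hbar$-grading on the Rees--Weyl algebra: each mode $W^{\hbar,i}_k=\hbar^i W^i_k$ is homogeneous of the appropriate $\hbar$-degree, and the dilaton shift hidden in $J_{-s}$ is engineered so that the sole order-$\hbar$ contribution is the single derivative $\partial_{\Pi_{r,s}(i,k)}$; I would verify this directly from the explicit formula \eqref{eq:Wmodes}, tracking how each normally-ordered product $:\!\prod_l J_{p_l}\!:$ contributes to a given power of $\hbar$ once the dilaton-shifted negative modes are expanded. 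A secondary, milder point is the non-coprime case, where by \cref{l:restriction} the map $\Pi_{r,s}$ is $g$-to-one and several constraints share the same leading derivative $\partial_m$; this creates no obstruction to uniqueness, since one recursion per $m$ suffices, and the mutual compatibility of the redundant constraints is automatic because $Z^{r,s}$ satisfies all of them.
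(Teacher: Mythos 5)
Your proposal is correct and takes essentially the same route as the paper's own proof: both extract from each constraint its leading form $C_\ell\,\hbar\partial_\ell + \bigO(\hbar^2)$ with $\ell$ sweeping $\mathbb{Z}_{>0}\setminus K_{r,s}$ (via \cref{l:restriction}), use the $\hbar$-grading to get a recursion for $F_{g,n}$ that is triangular in $2g-2+n$, and identify the residual unfixed data with exactly the $K_{r,s}$-monomials, i.e.\ the reduced potential of \cref{d:reduced}. Your write-up simply makes explicit two points the paper leaves implicit (the verification of triangularity from the Rees grading, and the harmlessness of the $g$-to-one redundancy in the non-coprime case), but the underlying argument is the same.
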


\begin{proof}
	Write the descendant potential as
	\begin{equation}
		Z^{r,s}
		=
		\exp \left(
			\sum_{\substack{g \geq 0, n \geq 1 \\ 2g-2+n>0}}
				\frac{\hbar^{2g-2+n}}{n!}
				\sum_{m_1,\ldots,m_n \in \mathbb{Z}_{>0}}
					F_{g,n}[m] \prod_{i=1}^n x_{m_i}
		\right).
	\end{equation}
	The operators in \cref{thm:W:const} take the form
	\begin{equation}
		C_\ell \, \hbar \partial_\ell + \bigO(\hbar^2)
	\end{equation}
	for all $\ell \in \mathbb{Z}_{>0} \setminus K_{r,s}$, where the $C_\ell$ are irrelevant non-zero constants. As a result, acting on the descendant potential $Z^{r,s}$, we get a recursion of the form
	\begin{equation}
		F_{g,n}[\ell, m_1,\ldots,m_{n-1}] = G_{g,n}[\ell,m_1,\ldots,m_{n-1}],
	\end{equation}
	where $G_{g,n}[\ell,m_1,\ldots,m_{n-1}]$ is a combination of $F_{g',n'}$ with $2g'-2+n' < 2g-2+n$. We get such an equation for all $\ell \in \mathbb{Z}_{>0} \setminus K_{r,s}$ and $m_1,\ldots,m_{n-1} \in \mathbb{Z}_{>0}$. By symmetry, this means that the constraints uniquely fix all $F_{g,n}[m_1,\ldots,m_n]$ with at least one entry in $\mathbb{Z}_{>0} \setminus K_{r,s}$ recursively in terms of $F_{g',n'}[m_1,\ldots,m_{n'}]$ with $m_1,\ldots,m_{n'} \in \mathbb{Z}_{>0}$ and $2g'-2+n'<2g-2+n$. 

	What remains unfixed by the constraints is the reduced descendant potential, i.e. the $F_{g,n}[m]$ with $m = (m_1,\ldots,m_n) \in K_{r,s}$. In other words, given the reduced descendant potential, the $\mathcal{W}$-constraints uniquely fix recursively the full descendant potential.
\end{proof}

\printbibliography

\end{document}